\numberwithin{equation}{section}
\theoremstyle{plain}
\newtheorem{theorem}{Theorem}[section]
\newtheorem{lemma}[theorem]{Lemma}
\newtheorem{corollary}[theorem]{Corollary}
\newtheorem{proposition}[theorem]{Proposition}
\theoremstyle{definition}
\newtheorem{remark}[theorem]{Remark}
\newtheorem{example}[theorem]{Example}
\newtheorem{definition}[theorem]{Definition}
\newtheorem{conjecture}[theorem]{Conjecture}
\def\ve{\mathbf{e}}
\def\C{\mathbb C}
\def\CP{\C P}
\def\R{\mathbb R}
\def\Z{\mathbb Z}
\def\GL{\mathrm{GL}}
\def\Fl{\mathcal{F}\ell}
\DeclareMathOperator{\rank}{rank}
\newcommand{\cfbox}[2]{%
    \colorlet{currentcolor}{.}%
    {\color{#1}%
    \fbox{\color{currentcolor}#2}}%
}
\begin{document}
\title[]{Toric Bruhat Interval Polytopes}

\date{\today}

\author[Eunjeong Lee]{Eunjeong Lee}
\address[E. Lee]{Center for Geometry and Physics, Institute for Basic Science (IBS), Pohang 37673, Republic of Korea}
\email{eunjeong.lee@ibs.re.kr}

\author[Mikiya Masuda]{Mikiya Masuda}
\address[M. Masuda]{Department of Mathematics, Graduate School of Science, Osaka City University, Sumiyoshi-ku, Sugimoto, 558-8585, Osaka, Japan}
\email{masuda@osaka-cu.ac.jp}

\author[Seonjeong Park]{Seonjeong Park}
\address[S. Park]{Department of mathematics, Ajou University,
Suwon 16499, Republic of Korea}
\email{seonjeong1124@gmail.com}

\subjclass[2010]{Primary: 14M25, 14M15, secondary: 	05A05}

\keywords{Bruhat interval polytopes, Richardson varieties, toric varieties}

\maketitle

\begin{abstract}
For two elements $v$ and $w$ of the symmetric group $\mathfrak{S}_n$ with $v\leq w$ in Bruhat order, the Bruhat interval polytope $Q_{v,w}$ is the convex hull of the points $(z(1),\ldots,z(n))\in \R^n$ with $v\leq z\leq w$. It is known that the Bruhat interval polytope $Q_{v,w}$ is the moment map image of the Richardson variety $X^{v^{-1}}_{w^{-1}}$. We say that $Q_{v,w}$ is \emph{toric} if the corresponding Richardson variety $X_{w^{-1}}^{v^{-1}}$ is a toric variety.  We show that when $Q_{v,w}$ is toric, its combinatorial type is determined by the poset structure of the Bruhat interval $[v,w]$ while this is not true unless $Q_{v,w}$ is toric. We are concerned with the problem of when $Q_{v,w}$ is (combinatorially equivalent to) a cube because $Q_{v,w}$ is a cube if and only if  $X_{w^{-1}}^{v^{-1}}$ is a smooth toric variety.  We show that a Bruhat interval polytope $Q_{v,w}$ is a cube if and only if $Q_{v,w}$ is toric and the Bruhat interval $[v,w]$ is a Boolean algebra. We also give several sufficient conditions on $v$ and $w$ for $Q_{v,w}$ to be a cube.   
\end{abstract}

\setcounter{tocdepth}{1}
\tableofcontents

\section{Introduction}

The permutohedron $\mathrm{Perm}_{n-1}$ is an $(n-1)$-dimensional simple polytope in $\R^n$ defined by the convex hull of all points $(u(1),u(2),\ldots,u(n))\in\R^n$ for $u$ in the symmetric group $\mathfrak{S}_n$ on the set~$\{1,2,\dots,n\}$. It was first investigated by Schoute in 1911 (see~\cite{zieg98} and references therein), and later Guilbaud and Rosenstiehl gave the name ``permutohedron'' in~\cite{gu-ro63}.
There are many works on generalizations of the notion of permutohedra such as generalized permutohedra in~\cite{po09}, graphicahedra in~\cite{adlos10}, Bruhat interval polytopes in~\cite{ts-wi15}, and so on.

On the other hand, the permutohedra have  appeared in not only combinatorics but also the geometries of flag varieties.
The flag variety $\Fl_n$ is a smooth projective variety which consists of chains $\{0\}\subset V_1\subset V_2\subset \cdots \subset V_n=\C^n$ of  subspaces of $\C^n$ with $\dim_{\C} V_i=i$. It is known that the algebraic torus $\mathbb{T}=(\C^\ast)^n$ acts on~$\Fl_n$ and there is a one-to-one correspondence between the set of fixed points of $\mathbb{T}$ on $\Fl_n$ and the elements of $\mathfrak{S}_n$.
Moreover, the moment map image of $\Fl_n$ is the permutohedron~$\mathrm{Perm}_{n-1}$, and the closure of the $\mathbb{T}$-orbit of a generic point of $\Fl_n$ is known to be the permutohedral variety, which is the toric variety whose fan is the normal fan of $\mathrm{Perm}_{n-1}$ (see \cite{Klyachko_orbits_85, Procesi_toric_90}).

In this manuscript, we are studying Bruhat interval polytopes that were introduced by Tsukerman and Williams \cite{ts-wi15} in 2015. For two elements $v$ and $w$ of the symmetric group $\mathfrak{S}_n$ with $v\leq w$ in Bruhat order, the Bruhat interval polytope $Q_{v,w}$ is defined to be the convex hull of all points~$(z(1),\ldots,z(n))\in \R^n$ with $v\leq z\leq w$. Bruhat interval polytopes are one of the generalizations of permutohedra. Indeed, the Bruhat interval polytope $Q_{e, w_0}$ is the permutohedron $\textrm{Perm}_{n-1}$ where $e$ is the identity element and $w_0$ is the longest element in $\mathfrak{S}_n$.

As in the case of permutohedra and flag varieties, Bruhat interval polytopes are related with Richardson varieties.
For $v\le w$, the Richardson variety $X_w^v$ is defined to be the intersection of the Schubert variety $X_w$ and the opposite Schubert variety $w_0X_{w_0v}$.  It is an irreducible $\mathbb{T}$-invariant subvariety of the flag variety $\Fl_n$.  It is known that there is a one-to-one correspondence between the set of fixed points of $\mathbb{T}$ on the Richardson variety $X^v_w$ and the set $\{z\mid v\leq z\leq w\}$, and it leads naturally to consider the convex hull of the points $(z(1),\ldots,z(n))\in \R^n$ with $v\leq z\leq w$.  Note that the moment map image of the Richardson variety $X^v_w$ is the Bruhat interval polytope $Q_{v^{-1},w^{-1}}$ not $Q_{v,w}$ (see Lemma~\ref{lem:moment_map}).

It should be noted that Bruhat interval polytopes $Q_{v,w}$ and $Q_{v^{-1},w^{-1}}$ are not combinatorially equivalent in general even though the Bruhat intervals $[v,w]$ and $[v^{-1},w^{-1}]$ are isomorphic as posets. Moreover, even if two Bruhat interval polytopes $Q_{v,w}$ and $Q_{v^{-1},w^{-1}}$ are combinatorially equivalent, the fact that a subinterval $[x,y]\subset [v,w]$ is realized as a face of $Q_{v,w}$ does not imply that the subinterval $[x^{-1},y^{-1}]$ is realized as a face of $Q_{v^{-1},w^{-1}}$ (see Remark~\ref{rmk_BIP_inverses_are_not_combinatorially_equivalent}).

We are particularly interested in Bruhat interval polytopes whose corresponding Richardson varieties are toric varieties (note that $X_w^v$ is a toric variety if and only if so is $X_{w^{-1}}^{v^{-1}}$). We call such a Bruhat interval polytope \emph{toric}. 
It is known that dim $Q_{v,w} \leq \dim X^{v^{-1}}_{w^{-1}} = \ell(w) -\ell(v)$ in general, where $\ell(~)$ denotes the length of a permutation, and we have that $\dim Q_{v,w} = \ell(w) - \ell(v)$ if and only if $Q_{v,w}$ is toric (see Section 3). 
Toric Bruhat interval polytopes have a bunch of nice properties that an arbitrary Bruhat interval polytope does not have. Furthermore, those nice properties give us topological and geometric information of toric Richardson varieties.

\begin{theorem}[{Theorem~\ref{prop:3-2}}]
	A Bruhat interval polytope $Q_{v,w}$ is toric if and only if every subinterval $[x,y]$ of $[v,w]$ is realized as a face of $Q_{v,w}$.
\end{theorem}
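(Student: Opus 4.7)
The plan is to use the geometry of the moment map together with the dimension criterion recalled in Section~3: $\dim Q_{v,w}\le \ell(w)-\ell(v)$, with equality if and only if $Q_{v,w}$ is toric. The starting observation is that for any subinterval $[x,y]\subset[v,w]$, the Richardson variety $X^{x^{-1}}_{y^{-1}}$ is a closed irreducible $\mathbb{T}$-invariant subvariety of $X^{v^{-1}}_{w^{-1}}$, and by the moment map lemma its moment-map image is exactly $Q_{x,y}$. Both directions will be reduced to this fact.

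For the ``$\Rightarrow$'' direction, assume $Q_{v,w}$ is toric, so $X^{v^{-1}}_{w^{-1}}$ is itself a projective toric variety under the restricted $\mathbb{T}$-action. In a projective toric variety, the closed irreducible $\mathbb{T}$-invariant subvarieties are exactly the $\mathbb{T}$-orbit closures, and their moment-map images are exactly the faces of the moment polytope. Applying this to the Richardson subvarieties $X^{x^{-1}}_{y^{-1}}\subset X^{v^{-1}}_{w^{-1}}$ for all subintervals $[x,y]\subset[v,w]$ shows that each $Q_{x,y}$ is a face of $Q_{v,w}$, as desired.

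For the ``$\Leftarrow$'' direction, assume every $Q_{x,y}$ with $[x,y]\subset[v,w]$ is a face of $Q_{v,w}$. Choose a saturated chain $v=z_0\lessdot z_1\lessdot\cdots\lessdot z_k=w$ in Bruhat order, where $k=\ell(w)-\ell(v)$, and consider the nested subintervals $[v,z_0]\subsetneq[v,z_1]\subsetneq\cdots\subsetneq[v,z_k]=[v,w]$. By hypothesis each $Q_{v,z_i}$ is a face of $Q_{v,w}$, and these faces are pairwise distinct because the vertices of $Q_{x,y}$ are exactly the points $(z(1),\dots,z(n))$ for $z\in[x,y]$---each such point is already a vertex of the ambient permutohedron $\mathrm{Perm}_{n-1}$, so no permutation vector is ever absorbed into the interior of a convex hull. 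Hence we obtain a strict flag of faces $\{v\}=Q_{v,z_0}\subsetneq Q_{v,z_1}\subsetneq\cdots\subsetneq Q_{v,z_k}=Q_{v,w}$ of length $k$, forcing $\dim Q_{v,w}\ge k=\ell(w)-\ell(v)$. Combined with the reverse inequality, equality holds and $Q_{v,w}$ is toric.

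The main subtlety lies in the ``$\Rightarrow$'' direction: one must apply the bijection between closed irreducible $\mathbb{T}$-invariant subvarieties and faces of the moment polytope \emph{inside the toric variety} $X^{v^{-1}}_{w^{-1}}$, not inside the ambient flag variety $\Fl_n$ (which is not toric). Since every Richardson subvariety $X^{x^{-1}}_{y^{-1}}$ is automatically irreducible and $\mathbb{T}$-invariant, the toric-variety statement applies verbatim once one has identified $X^{v^{-1}}_{w^{-1}}$ as toric. The converse direction is then essentially combinatorial bookkeeping, resting only on the elementary observation that permutation points are always vertices of their convex hulls.
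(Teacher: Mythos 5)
Your proof is correct. The ``only if'' direction coincides with the paper's argument: both identify $X^{x^{-1}}_{y^{-1}}$ as a closed irreducible $\mathbb{T}$-invariant subvariety of the toric variety $X^{v^{-1}}_{w^{-1}}$ and invoke the correspondence between such subvarieties (orbit closures) and faces of the moment polytope. Your ``if'' direction, however, takes a genuinely different and more direct route. The paper proceeds by induction on $\ell(w)-\ell(v)$: it first shows the hypothesis descends to every proper subinterval (using that a face of $Q_{v,w}$ contained in a face $Q_{x,y}$ is a face of $Q_{x,y}$), concludes by induction that $\dim Q_{v,y}=\ell(y)-\ell(v)$ for a coatom $y\lessdot w$, and then uses $\dim Q_{v,w}>\dim Q_{v,y}$ to peel off one dimension at a time. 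You instead take a single saturated chain $v=z_0\lessdot\cdots\lessdot z_k=w$ and produce the strictly increasing flag of faces $\{v\}=Q_{v,z_0}\subsetneq Q_{v,z_1}\subsetneq\cdots\subsetneq Q_{v,z_k}=Q_{v,w}$, whose existence forces $\dim Q_{v,w}\ge k$ at once. The properness of the inclusions is correctly justified by your observation that every permutation vector is a vertex of $\mathrm{Perm}_{n-1}$, hence an extreme point of any sub-polytope containing it, so the vertex set of $Q_{x,y}$ is exactly $[x,y]$; and the dimension jump at each step uses the standard fact that a face of $P$ contained in a face $F$ is a proper face of $F$ when the containment is strict. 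Both arguments ultimately rest on the same dimension-drop principle, but yours avoids the induction entirely and only uses the hypothesis for the intervals $[v,z_i]$ along one chain, which is a mild strengthening of the statement; the paper's induction has the incidental benefit of recording that every proper subinterval is already toric, though this also follows immediately once the theorem is established.
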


The above theorem implies that if $Q_{v,w}$ is toric, then its combinatorial type is  determined by the poset structure of $[v,w]$, and hence $Q_{v,w}$ and $Q_{v^{-1},w^{-1}}$ are combinatorially equivalent.

Combinatorial properties of a toric Bruhat interval polytope $Q_{v,w}$ give us some geometric information about the toric Richardson variety $X^v_w$. The toric Richardson variety $X^v_w$ is smooth at a $\mathbb{T}$-fixed point $uB$ for $v\leq u\leq w$ if and only if the vertex $u$ of the Bruhat interval polytope $Q_{v,w}$ is simple, that is, the number of edges meeting at the vertex $u$ is same as the dimension of the polytope $Q_{v,w}$ (see Proposition~\ref{prop:smooth}). Hence a Richardson variety is a smooth toric variety if and only if the corresponding Bruhat interval polytope is toric and a simple polytope.

Note that every toric Schubert variety is smooth and its corresponding Bruhat interval polytope is combinatorially equivalent to a cube (see~\cite{Fan98,Karu_Schubert_13,le-ma18}). But not every toric Bruhat interval polytope is a simple polytope and hence not every toric Richardson variety is smooth. See Figures~\ref{fig:S4 and 4-crown} and~\ref{fig:1324-3412}. By restricting our attention to toric Bruhat interval polytopes, we get the following.

\begin{proposition}[{Proposition~\ref{prop:3-5}}]
A toric Bruhat interval polytope is a simple polytope if and only if it is combinatorially equivalent to a cube.
\end{proposition}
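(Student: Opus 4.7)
The implication ``cube $\Rightarrow$ simple'' is standard, so I focus on the converse: assume $Q:=Q_{v,w}$ is toric and simple, and let $d=\ell(w)-\ell(v)=\dim Q$. The plan is to exploit Theorem~\ref{prop:3-2} (every subinterval of $[v,w]$ is realized as a face of $Q$) together with simpleness at the vertex $v$ to show that $[v,w]$ is a Boolean algebra of rank $d$, and then to identify the face lattice of $Q$ with that of the $d$-cube.

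I would start at the bottom vertex $v$. By Theorem~\ref{prop:3-2} the edges of $Q$ incident to $v$ are precisely the $1$-dimensional faces $Q_{v,u}$ for covers $v\lessdot u$ in $[v,w]$, and simpleness at $v$ forces there to be exactly $d$ such atoms $u_1,\dots,u_d$. Since $Q$ is simple, the faces of $Q$ containing $v$ are in bijection with subsets $S\subseteq\{1,\dots,d\}$, the face $F_S$ being the $|S|$-dimensional face whose edges at $v$ are $\{[v,u_i]:i\in S\}$. On the other hand every such face is a Bruhat interval polytope $Q_{v,y_S}$ for some $y_S\in[v,w]$, and by Theorem~\ref{prop:3-2} \emph{every} $y\in[v,w]$ gives a face $Q_{v,y}$ through $v$; this yields a bijection $y\mapsto S(y)\colon[v,w]\to 2^{\{1,\dots,d\}}$, and in particular $|[v,w]|=2^d$.

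Next I would verify that this bijection is order-preserving. If $y\le y'$ in $[v,w]$, then $Q_{v,y'}$ is itself a toric Bruhat interval polytope (any face of a toric Bruhat interval polytope is again toric), so applying Theorem~\ref{prop:3-2} inside $Q_{v,y'}$ shows that $Q_{v,y}$ is a face of $Q_{v,y'}$; hence $F_{S(y)}\subseteq F_{S(y')}$ in $Q$, i.e.\ $S(y)\subseteq S(y')$. Conversely, if $S(y)\subseteq S(y')$ then $Q_{v,y}\subseteq Q_{v,y'}$ as faces of $Q$, so the vertex $y$ of $Q_{v,y}$ must lie in the vertex set $[v,y']$ of $Q_{v,y'}$, giving $y\le y'$. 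Thus $[v,w]\cong 2^{\{1,\dots,d\}}$ as posets.

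To conclude, Theorem~\ref{prop:3-2} identifies the face poset of $Q$ with the poset of subintervals of $[v,w]$, which under the isomorphism $[v,w]\cong 2^{\{1,\dots,d\}}$ is exactly the face poset of the $d$-cube (a subinterval $[A,B]$ corresponds to the face of $[0,1]^d$ obtained by fixing the coordinates in $A$ to $1$ and in $\{1,\dots,d\}\setminus B$ to $0$, with subinterval containment matching face containment). Therefore $Q$ is combinatorially equivalent to a cube. I expect the most delicate step to be the order-preserving argument, which relies precisely on the stability of the toric property under passing to faces, so that Theorem~\ref{prop:3-2} can be invoked inside $Q_{v,y'}$.
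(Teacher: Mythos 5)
Your proof is correct, but it takes a genuinely different route from the paper's. The paper disposes of the nontrivial direction in three lines: since $Q_{v,w}$ is toric, every $2$-face is $Q_{x,y}$ for a length-$2$ interval $[x,y]$, every length-$2$ Bruhat interval is a diamond, so every $2$-face is a quadrilateral, and then the standard polytope fact recorded as Lemma~\ref{lemm:3-4} (a simple polytope all of whose $2$-faces are $2$-cubes is a cube) finishes. You instead work at the vertex $v$: faces through $v$ biject with subsets of the $d$ edges there, every $y\in[v,w]$ yields such a face $Q_{v,y}$ and every face through $v$ arises this way, the resulting bijection $[v,w]\to 2^{\{1,\dots,d\}}$ is a poset isomorphism, and you then match the face lattice of $Q_{v,w}$ (which by toricness is the subinterval poset of $[v,w]$) with that of the $d$-cube. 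The points you leave implicit are all fine: a face containing $v$ has vertex set an interval $[x,y]\subseteq[v,w]$ containing $v$, which forces $x=v$; and the containment $Q_{v,y}\subseteq Q_{v,y'}$ for $y\le y'$ already follows from comparing vertex sets, so you do not strictly need the (true, and implicit in the proof of Theorem~\ref{prop:3-2}) fact that faces of toric Bruhat interval polytopes are toric. What your route buys: it uses simpleness only at the single vertex $v$, so it actually proves the stronger statement of Proposition~\ref{prop:3-7} and Corollary~\ref{coro:3-8} (toric plus one simple vertex implies cube), and it extracts the Boolean-ness of $[v,w]$ along the way, which the paper obtains separately by a longer induction in the harder direction of Theorem~\ref{theo:3-6}. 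What the paper's route buys is brevity, at the cost of quoting Lemma~\ref{lemm:3-4} as a black box.
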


It is well-known in toric topology that every smooth toric variety whose fan is the normal fan of a combinatorial cube has a sequence of $\CP^1$-fiber bundles, so called a {Bott tower}.\footnote{A \emph{Bott tower} is a family of smooth projective toric varieties $\{B_{2k} \mid 1\leq k \leq n \}$ such that $B_2=\CP^1$ and $B_{2k}=P(\underline{\C}\oplus \xi_{k-1})$ for $1< k\leq n$ where $P(\cdot)$ denotes complex projectivization, $\xi_{k-1}$ is a complex line bundle over $B_{2(k-1)}$ and $\underline{\C}$ is the trivial line bundle (see~\cite{gr-ka94}). We call $B_{2k}$ a \emph{Bott manifold} (of height~$k$).} Hence the above proposition implies that every smooth toric Richardson variety is a Bott manifold that is a manifold in a Bott tower.
We can further show the following whose geometric meaning is that a Richardson variety $X^v_w$ is a Bott manifold if and only if it is toric and the Bruhat interval $[v,w]$ is a Boolean algebra.

\begin{theorem}[{Theorem~\ref{theo:3-6}}]\label{thm:main}
A Bruhat interval polytope $Q_{v,w}$ is combinatorially equivalent to a cube if and only if it is toric and the Bruhat interval $[v,w]$ is a Boolean algebra.
\end{theorem}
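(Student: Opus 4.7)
The proof splits by implication, and both halves rely on the Tsukerman--Williams fact \cite{ts-wi15} that every face of a Bruhat interval polytope $Q_{v,w}$ has the form $Q_{x,y}$ with vertex set exactly the subinterval $[x,y]\subseteq[v,w]$. In particular, every edge of $Q_{v,w}$ corresponds to a covering relation $x\lessdot y$ in $[v,w]$.

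\emph{Sufficiency.} Suppose $Q_{v,w}$ is toric and $[v,w]\cong B_k$ is Boolean of rank $k=\ell(w)-\ell(v)=\dim Q_{v,w}$. By Theorem~\ref{prop:3-2}, every subinterval of $[v,w]$ is a face of $Q_{v,w}$, so every covering relation yields an edge. Every element of $B_k$ has exactly $k$ Hasse neighbors (covers above plus covers below), so each vertex of $Q_{v,w}$ has at least $k$ edges; combined with $\dim Q_{v,w}=k$, every vertex is simple. Proposition~\ref{prop:3-5} then concludes that $Q_{v,w}$ is combinatorially a cube.

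\emph{Necessity.} Suppose $Q_{v,w}$ is combinatorially a $k$-cube, so $|[v,w]|=2^k$, $\dim Q_{v,w}=k$, and $m:=\ell(w)-\ell(v)\ge k$. My plan is first to establish toricity ($m=k$), then to identify $[v,w]$ with $B_k$. For toricity, note that every edge of $Q_{v,w}$ is a Bruhat cover, so the rank function $r(u):=\ell(u)-\ell(v)$ changes by exactly $\pm 1$ along each cube edge. The cube distance from $v$ to $w$ is at most $k$, and along any such path of $d\le k$ edges the net change in $r$ is at most $d$ in absolute value. Hence $m=r(w)-r(v)\le k$, which forces $m=k$.

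Granted toricity, Theorem~\ref{prop:3-2} puts edges of $Q_{v,w}$ in bijection with the covering relations of $[v,w]$, so each vertex has exactly $k$ Hasse neighbors. Fix a combinatorial isomorphism of the vertex set with $\{0,1\}^k$. Since a Bruhat chain from $v$ to $w$ of length $k$ realizes a cube path of length $k$, the vertices $v$ and $w$ are cube-antipodal; I relabel so that $v\leftrightarrow\bar 0$ and $w\leftrightarrow\bar 1$. The rank function $r\colon\{0,1\}^k\to\{0,\dots,k\}$ then satisfies $r(\bar 0)=0$, $r(\bar 1)=k$, and changes by $\pm 1$ on each cube edge. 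A shortest-path squeeze---a cube-geodesic from $\bar 0$ to $S$ has length $|S|$, forcing $r(S)\le|S|$, and the symmetric argument gives $r(S)\ge|S|$---shows $r(S)=|S|$ for all $S$. Finally, matching Bruhat chains against cube paths on which $r$ strictly increases one step at a time shows that $u_1\le u_2$ in Bruhat iff $S_{u_1}\subseteq S_{u_2}$, identifying $[v,w]$ with $B_k$. The main input from outside this argument is the Tsukerman--Williams face description, which is what converts the combinatorial assumption ``$Q_{v,w}$ is a cube'' into the rigid length statement that every cube edge is a Bruhat cover; given that, both toricity and the Boolean identification reduce to elementary cube-graph bookkeeping.
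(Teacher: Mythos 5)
Your sufficiency argument is essentially the paper's: toricity turns cover relations into edges (Theorem~\ref{prop:3-2}), the Boolean hypothesis counts them, and Proposition~\ref{prop:3-5} finishes. For necessity, however, you take a genuinely different and shorter route. The paper argues by induction on the dimension $m$ of the cube: it splits the cube into two disjoint facets $Q_{v,q}$ and $Q_{r,w}$, counts coatoms to pin down $\ell(q)=\ell(w)-1$ and $\ell(r)=\ell(v)+1$, deduces toricity from the induction hypothesis applied to a facet, and then runs a second, rather delicate induction (regarding $[v,q]$ as the Boolean algebra on $[m-1]$ and locating the missing element $\{i_1,\dots,i_k\}$) to verify all cover relations needed for $[v,w]$ to be Boolean. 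You instead exploit the metric structure of the cube graph: since every edge of $Q_{v,w}$ is a Bruhat cover (by the Tsukerman--Williams face theorem \cite{ts-wi15}, plus the fact that every element of a subinterval is a vertex of the corresponding face), the length function changes by $\pm1$ along each edge, so the graph distance $d(v,w)\le k$ bounds $\ell(w)-\ell(v)$ from above and, together with $\dim Q_{v,w}\le\ell(w)-\ell(v)$, forces toricity in one stroke; the same squeeze identifies the rank function with Hamming weight, and matching the (directed) Hasse diagram of $[v,w]$ with that of the Boolean lattice gives the poset isomorphism without induction. The paper's approach stays entirely inside facet-by-facet polytope language; yours trades that for elementary cube-graph bookkeeping and avoids the fiddly cover-relation verification at the end of the paper's proof. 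Both are valid.

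Two small repairs are needed. First, in the sufficiency direction, ``at least $k$ edges at each vertex'' combined with $\dim Q_{v,w}=k$ does not yield simplicity---every vertex of a $k$-dimensional polytope automatically has at least $k$ edges; you need the reverse inequality, which follows from your own opening observation that every edge is a cover, so the edge count at $u$ equals the number of Hasse neighbours, namely $k$. Second, a cube path of length $k$ from $v$ to $w$ does not by itself make them antipodal (a length-$k$ walk in the $k$-cube can terminate at Hamming distance less than $k$); the correct justification is the one you deploy elsewhere: any path from $v$ to $w$ has length at least $|r(w)-r(v)|=k$, hence $d(v,w)=k$. Neither slip affects the soundness of the overall argument.
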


In the above theorem, we cannot drop the toric condition. There exist permutations $v$ and $w$ in~$\mathfrak{S}_n$ ($n\geq 4$) such that the Bruhat interval $[v,w]$ is a Boolean algebra but the combinatorial type of the Bruhat interval polytope $Q_{v,w}$ is not a cube. See Figure~\ref{fig:1324-4231} and Section~\ref{sec:Product of Bruhat intervals}.

We also study a necessary and sufficient condition on $v$ and $w$ such that the Bruhat interval polytope~$Q_{v,w}$ is toric or combinatorially equivalent to a cube.
It was shown in~\cite{Fan98} that a Bruhat interval polytope $Q_{e,w}$ is combinatorially equivalent to a cube if and only if $w$ is a product of distinct simple transpositions. But the similar extension does not hold for general $v$. That is, even if there exist reduced expressions $r(v)$ and $r(w)$ for $v$ and $w$ such that the subword $r(w)\setminus r(v)$ of $r(w)$ consists of distinct simple transpositions, we cannot conclude that $Q_{v,w}$ is combinatorially equivalent to a cube (see Example~\ref{exam:3-1}) nor toric (see Example~\ref{ex:not-distinct-cube}). So, it seems difficult to characterize $v$ and $w$ for which $Q_{v,w}$ is toric or combinatorially equivalent to a cube. We find some sufficient conditions on $v$ and $w$ for $Q_{v,w}$ to be toric, and give a necessary and sufficient condition on $v$ and $w$ for $Q_{v,w}$ to be a cube when $v$ and $w$ satisfy some special condition.

This manuscript is organized as follows. In Section~\ref{sec:Preliminaries}, we compile some basic facts on posets, polytopes and toric varieties, and introduce Bruhat interval polytopes. In Section~\ref{sec:Relationship with Richardson varieties}, we show that the Bruhat interval polytope $Q_{v,w}$ is the moment map image of the Richardson variety $X^{v^{-1}}_{w^{-1}}.$ In Section~\ref{sec:Properties of Bruhat interval polytopes}, we interpret combinatorial properties of Bruhat interval polytopes in terms of graphs defined by  Bruhat intervals. Section~\ref{sec:Toric Bruhat interval polytopes} deals with properties of toric Bruhat interval polytopes and contains the proof of Theorem~\ref{thm:main}. In Section~\ref{sec:Product of Bruhat intervals}, we show that there are infinitely many non-simple toric Bruhat interval polytopes. In Section~\ref{sec:Conditions on v and w}, we find some sufficient conditions on $v$ and $w$ for $Q_{v,w}$ to be toric, and then for such toric Bruhat interval polytopes $Q_{v,w}$ we find a sufficient condition to be a cube. In Section~\ref{sec:Finding all coatoms},  we will find all coatoms of the Bruhat interval $[v,w]$ when $v$ and $w$ satisfy some special condition, and then describe when $Q_{v,w}$ is a cube for such special cases.

\bigskip

\noindent\textbf{Acknowledgements.}
The authors thanks to Akiyoshi Tsuchiya for his computer program to check Conjecture~\ref{conj} for $\mathfrak{S}_5$ and $\mathfrak{S}_6$.
Lee was supported by IBS-R003-D1. Masuda was supported in part by JSPS Grant-in-Aid for Scientific Research 16K05152.
Park was supported by Basic Science Research Program through the National Research Foundation of Korea (NRF) funded by the Government of Korea (NRF-2018R1A6A3A11047606) and (NRF-2016R1D1A1A09917654).

\section{Preliminaries}\label{sec:Preliminaries}
In this section, we prepare some notions and basic facts about posets and polytopes, and then introduce the notion of Bruhat interval polytopes.

\subsection{Posets and Bruhat orders} Let $\mathcal{P}$ be a poset (partially ordered set) with an order relation~$<$.
For two elements $x,y\in\mathcal{P}$, we say $y$ \emph{covers} $x$, denoted by $x\lessdot y$, if $x<y$ and there is no $z$ such that $x<z<y$. We also call it a cover $x\lessdot y$.
One represents $\mathcal{P}$ as a  mathematical diagram, called a \emph{Hasse diagram}, in a way that a point in the plane is drawn for each element of~$\mathcal{P}$, and a line segment or curve is drawn  upward from $x$ to $y$ whenever $y$ covers $x$.
A \emph{chain} of~$\mathcal{P}$ is
a totally ordered subset
 $\sigma$ of $\mathcal{P}$, and the \emph{length} $\ell(\sigma)$ of a chain~$\sigma$ is defined to be $|\sigma|-1$.
The \emph{length} $\ell(\mathcal{P})$ of a poset $\mathcal{P}$ is the length of a longest chain of~$\mathcal{P}$.
For $x \leq y$ in~$\mathcal{P}$, let $[x,y]$ denote the closed interval $\{z\in \mathcal{P}\mid x \le z \le y\}$, and let $(x,y)$ denote the open interval $\{z\in\mathcal{P}\mid x<z<y\}$.
If $\mathcal{P}$ has a unique minimum element, it is referred
to as the bottom element. Similarly, the unique maximum element, if it exists,
is referred to as the top element.
An element of~$\mathcal{P}$ that covers the bottom element is called  an \emph{atom}; and an element covered by the top element is called a \emph{coatom}.

A \emph{graded poset} is a poset $\mathcal{P}$ equipped with a rank function $\rho$ from $\mathcal{P}$ to $\Z_{\geq 0}$ satisfying the following:
\begin{enumerate}
\item if $x< y$ in $\mathcal{P}$, then $\rho(x)< \rho(y)$; and
\item if $x\lessdot y$, then $\rho(y)=\rho(x)+1$.
\end{enumerate}
The value of the rank function for an element of the poset is called its \emph{rank}.

 Let $\mathfrak{S}_n$ be the symmetric group on the set $[n]:= \{1,2,\dots,n\}$. We will denote an element $v\in \mathfrak{S}_n$ by $$[v(1),v(2),\ldots,v(n)]\quad\text{ or }\quad v(1)v(2)\cdots v(n).$$
For $1 \leq i < j \leq n$, the permutation which acts on $[n]$ by swapping $i$ and $j$ is called a \emph{transposition} and denoted by $(i,j)$ or $t_{i,j}$. Indeed,
 \[
 \begin{tikzcd}[row sep = 0cm, column sep = -0.2cm]
 (i,j) = t_{i,j} = [1,2,\dots,i-1, &j,& i+1,\dots,j-1,&i,&j+1,\dots,n]. \\
 &i\text{th}&&j\text{th}
 \end{tikzcd}
 \]
 We denote the set of transpositions in $\mathfrak{S}_n$ by $T$.
\begin{equation}\label{eq_def_of_T}
T=\{(i,j)\mid 1\leq i < j\leq n\}.
\end{equation}
The \emph{simple transpositions} $s_i$ are the transpositions of the form
\[
s_i := (i,i+1), \quad \text{ for }i=1,\ldots,n-1.
\]
Note that every element of $\mathfrak{S}_n$ can be represented as a
product of simple transpositions, although the decomposition is not
unique.

For $v\in\mathfrak{S}_n$, if $v= s_{i_1}\cdots s_{i_\ell}$ and  is minimal among all such expressions, then the string of indices $i_1 \cdots i_\ell$ is called a \emph{reduced decomposition} of~$v$ and $\ell$ is called the \emph{length} of~$v$, denoted $\ell(v)$. Note that $\ell(v^{-1})=\ell(v)$.
The Bruhat order on $\mathfrak{S}_n$ is defined by $v\leq w$  if a reduced decomposition for $v$ is a substring of some reduced decomposition for $w$. Then the Bruhat order on $\mathfrak{S}_n$ is a graded poset, with rank function given by length. The elements
\[
e:=[1,2,\ldots,n] \quad \text{ and } \quad w_0:=[n,n-1,\ldots,1]
\]
are the bottom and the top elements of the poset $\mathfrak{S}_n$, respectively.
For $v$ and $w$ in $\mathfrak{S}_n$ with $v\leq w$, the \emph{Bruhat interval} $[v,w]$ is defined to be the closed interval $$[v,w]=\{z\in\mathfrak{S}_n\mid v\leq z\leq w\}.$$
Figure~\ref{fig:S_4} shows the Hasse diagram of $\mathfrak{S}_4$ under Bruhat order and we illustrate an example of a Bruhat interval in Figure~\ref{fig:interval_1324-3412}
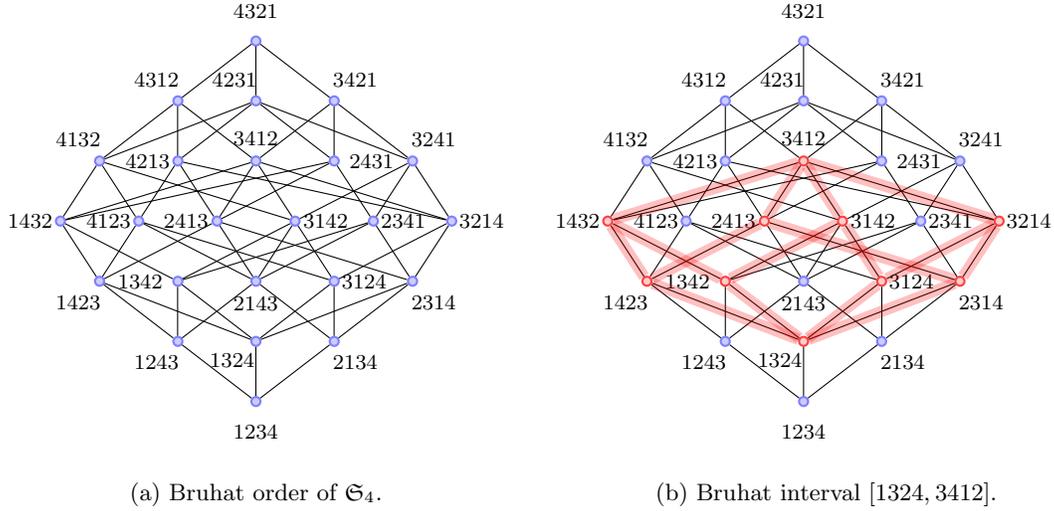
\begin{figure}[h!]
\begin{subfigure}[b]{0.49\textwidth}
\centering
\begin{tikzpicture}[scale=.7]
\tikzset{every node/.style={font=\footnotesize}}
		\matrix [matrix of math nodes,column sep={0.52cm,between origins},
		row sep={0.8cm,between origins},
		nodes={circle, draw=blue!50,fill=blue!20, thick, inner sep = 0pt , minimum size=1.2mm}]
		{
			& & & & & \node[label = {above:{4321}}] (4321) {} ; & & & & & \\
			& & &
			\node[label = {above left:4312}] (4312) {} ; & &
			\node[label = {above left:4231}] (4231) {} ; & &
			\node[label = {above right:3421}] (3421) {} ; & & & \\
			& \node[label = {above left:4132}] (4132) {} ; & &
			\node[label = {left:4213}] (4213) {} ; & &
			\node[label = {[label distance = -0.1cm] above:3412}] (3412) {} ; & &
			\node[label = {[label distance = 0.1cm]0:2431}] (2431) {} ; & &
			\node[label = {above right:3241}] (3241) {} ; & \\
			\node[label = {left:1432}] (1432) {} ; & &
			\node[label = {left:4123}] (4123) {} ; & &
			\node[label = {[label distance = 0.01cm]180:2413}] (2413) {} ; & &
			\node[label = {[label distance = 0.01cm]0:3142}] (3142) {} ; & &
			\node[label = {right:2341}] (2341) {} ; & &
			\node[label = {right:3214}] (3214) {} ; \\
			& \node[label = {below left:1423}] (1423) {} ; & &
			\node[label = {[label distance = 0.1cm]182:1342}] (1342) {} ; & &
			\node[label = {[label distance = -0.1cm] below:2143}] (2143) {} ; & &
			\node[label = {right:3124}] (3124) {} ; & &
			\node[label = {below right:2314}] (2314) {} ; & \\
			& & & \node[label = {below left:1243}] (1243) {} ; & &
			\node[label = {[label distance = 0.01cm]190:1324}] (1324) {} ; & &
			\node[label = {below right:2134}] (2134) {} ; & & & \\
			& & & & & \node[label = {below:1234}] (1234) {} ; & & & & & \\
		};		
		\draw (4321)--(4312)--(4132)--(1432)--(1423)--(1243)--(1234)--(2134)--(2314)--(2341)--(3241)--(3421)--(4321);
		\draw (4321)--(4231)--(4132);
		\draw (4231)--(3241);
		\draw (4231)--(2431);
		\draw (4231)--(4213);
		\draw (4312)--(4213)--(2413)--(2143)--(3142)--(3241);
		\draw (4312)--(3412)--(2413)--(1423)--(1324)--(1234);
		\draw (3421)--(3412)--(3214)--(3124)--(1324);
		\draw (3421)--(2431)--(2341)--(2143)--(2134);
		\draw (4132)--(4123)--(1423);
		\draw (4132)--(3142)--(3124)--(2134);
		\draw (4213)--(4123)--(2143)--(1243);
		\draw (4213)--(3214);
		\draw (3412)--(1432)--(1342)--(1243);
		\draw (2431)--(1432);
		\draw (2431)--(2413)--(2314);
		\draw (3142)--(1342)--(1324);
		\draw (4123)--(3124);
		\draw (2341)--(1342);
		\draw (2314)--(1324);
		\draw (3412)--(3142);
		\draw (3241)--(3214)--(2314);
\end{tikzpicture}
\subcaption{Bruhat order of $\mathfrak{S}_4$.}\label{fig:S_4}
\end{subfigure}
\begin{subfigure}[b]{0.49\textwidth}
		\begin{tikzpicture}[scale=.7]
			\tikzset{every node/.style={font=\footnotesize}}
			\tikzset{red node/.style = {fill=red!20!white, draw=red!75!white}}
	\tikzset{red line/.style = {line width=1ex, red,nearly transparent}}	
		\matrix [matrix of math nodes,column sep={0.52cm,between origins},
		row sep={0.8cm,between origins},
		nodes={circle, draw=blue!50,fill=blue!20, thick, inner sep = 0pt , minimum size=1.2mm}]
		{
			& & & & & \node[label = {above:{4321}}] (4321) {} ; & & & & & \\
			& & &
			\node[label = {above left:4312}] (4312) {} ; & &
			\node[label = {above left:4231}] (4231) {} ; & &
			\node[label = {above right:3421}] (3421) {} ; & & & \\
			& \node[label = {above left:4132}] (4132) {} ; & &
			\node[label = {left:4213}] (4213) {} ; & &
			\node[label = {[label distance = -0.1cm] above:3412}, red node] (3412) {} ; & &
			\node[label = {[label distance = 0.1cm]0:2431}] (2431) {} ; & &
			\node[label = {above right:3241}] (3241) {} ; & \\
			\node[label = {left:1432}, red node] (1432) {} ; & &
			\node[label = {left:4123}] (4123) {} ; & &
			\node[label = {[label distance = 0.01cm]180:2413}, red node] (2413) {} ; & &
			\node[label = {[label distance = 0.01cm]0:3142}, red node] (3142) {} ; & &
			\node[label = {right:2341}] (2341) {} ; & &
			\node[label = {right:3214}, red node] (3214) {} ; \\
			& \node[label = {below left:1423}, red node] (1423) {} ; & &
			\node[label = {[label distance = 0.1cm]182:1342}, red node] (1342) {} ; & &
			\node[label = {[label distance = -0.1cm] below:2143}] (2143) {} ; & &
			\node[label = {right:3124}, red node] (3124) {} ; & &
			\node[label = {below right:2314}, red node] (2314) {} ; & \\
			& & & \node[label = {below left:1243}] (1243) {} ; & &
			\node[label = {[label distance = 0.01cm]190:1324}, red node] (1324) {} ; & &
			\node[label = {below right:2134}] (2134) {} ; & & & \\
			& & & & & \node[label = {below:1234}] (1234) {} ; & & & & & \\
		};
		
		\draw (4321)--(4312)--(4132)--(1432)--(1423)--(1243)--(1234)--(2134)--(2314)--(2341)--(3241)--(3421)--(4321);
		\draw (4321)--(4231)--(4132);
		\draw (4231)--(3241);
		\draw (4231)--(2431);
		\draw (4231)--(4213);
		\draw (4312)--(4213)--(2413)--(2143)--(3142)--(3241);
		\draw (4312)--(3412)--(2413)--(1423)--(1324)--(1234);
		\draw (3421)--(3412)--(3214)--(3124)--(1324);
		\draw (3421)--(2431)--(2341)--(2143)--(2134);
		\draw (4132)--(4123)--(1423);
		\draw (4132)--(3142)--(3124)--(2134);
		\draw (4213)--(4123)--(2143)--(1243);
		\draw (4213)--(3214);
		\draw (3412)--(1432)--(1342)--(1243);
		\draw (2431)--(1432);
		\draw (2431)--(2413)--(2314);
		\draw (3142)--(1342)--(1324);
		\draw (4123)--(3124);
		\draw (2341)--(1342);
		\draw (2314)--(1324);
		\draw (3412)--(3142);
		\draw (3241)--(3214)--(2314);
		\draw[red line] (1324)--(1423)--(1432)--(3412);
		\draw[red line] (1324)--(2314)--(3214)--(3412);
		\draw[red line] (1324)--(1342)--(1432);
		\draw[red line] (2314)--(2413)--(3412);
		\draw[red line] (1423)--(2413);
		\draw[red line] (1342)--(3142)--(3412);
		\draw[red line] (1324)--(3124)--(3214);
		\draw[red line] (3124)--(3142);
		\end{tikzpicture}
		\subcaption{Bruhat interval $[1324, 3412]$.}\label{fig:interval_1324-3412}
\end{subfigure}
\caption{Bruhat order of $\mathfrak{S}_4$ and an example of a Bruhat interval.}\label{fig:S4 and 4-crown}
\end{figure}

\subsection{Polytopes and toric varieties} A \emph{convex polytope} is the convex hull  of a finite set of points in the Euclidean space $\R^n$. It is well known that every convex polytope is a bounded intersection of finitely many half-spaces. Two polytopes are \emph{combinatorially equivalent} if their face posets are isomorphic.
For a vertex~$v$ of a polytope $P$, the \emph{degree} $d(v)$ of $v$ is the number of edges meeting at~$v$.
For an $n$-dimensional polytope $P$, a vertex~$v$ of~$P$ is said to be \emph{simple} if $d(v) = n$. When all the vertices of~$P$ are simple, we call $P$ a \emph{simple polytope}.

A \emph{lattice polytope} is a convex polytope whose vertices are in the lattice $\Z^n\subset \R^n$. A vertex $v$ of a lattice polytope $P$ is said to be \emph{smooth} if it is simple and the primitive direction vectors of the edges emanating from $v$ form a basis for $\Z^n$. We call a vertex of~$P$ \emph{singular} if it is not smooth.  A lattice polytope $P$ is said to be \emph{smooth} if all the vertices of~$P$ are smooth. We call a lattice polytope $P$ is \emph{singular} if some vertex of $P$ is singular.
See Figure~\ref{fig:def-poly}.

\begin{figure}[h]
    \begin{subfigure}[b]{.3\textwidth}
        \begin{center}
            \begin{tikzpicture}[scale=0.4]
                \filldraw[draw=black,fill=lightgray] (0,0)--(2,-1)--(3,0.5)--(1.5,2)--cycle;
                \draw[dotted](0,0)--(1.2,0.8)--(3,0.5);
                \draw[dotted] (1.5,2)--(1.2,0.8);
                \draw (2,-1)--(1.5,2);
            \end{tikzpicture}
        \end{center}
    \caption{Not simple (so singular).}
        \end{subfigure}
        \begin{subfigure}[b]{.3\textwidth}
        \begin{center}
            \begin{tikzpicture}[scale=0.4]
                \filldraw[draw=black,fill=lightgray] (0,0)--(4,0)--(0,2)--cycle;
                \foreach \x in {-1,0,...,5}
                    \foreach \y in {-1,0,...,3}
                    {
                    \fill (\x,\y) circle (2pt);
                    }
            \end{tikzpicture}
        \end{center}
    \caption{Simple but singular.}
        \end{subfigure}
        \begin{subfigure}[b]{.3\textwidth}
        \begin{center}
            \begin{tikzpicture}[scale=0.4]
                \filldraw[draw=black,fill=yellow] (0,0)--(2,0)--(0,2)--cycle;
                \foreach \x in {-1,0,...,3}
                    \foreach \y in {-1,0,...,3}
                    {
                    \fill (\x,\y) circle (2pt);
                    }
            \end{tikzpicture}
        \end{center}
    \caption{Smooth.}
    \end{subfigure}
    \caption{Example and non-examples of smooth lattice polytopes.}
    \label{fig:def-poly}
    \end{figure}
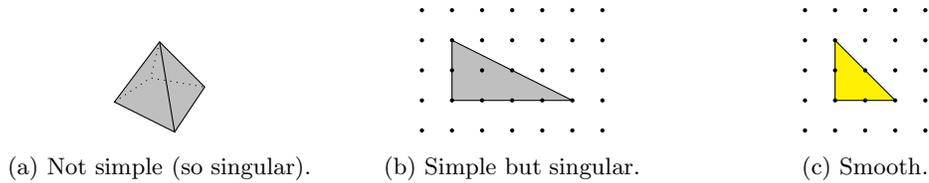

A \emph{toric variety} of complex dimension $n$ is a normal algebraic variety containing an algebraic torus $(\C^\ast)^n$ as a Zariski open dense subset such that the action of the torus on itself extends to the whole variety. It is known that a lattice polytope $P$ defines a projective toric variety $X(P)$, that is, $X(P)$ can be given as the closure of the image of a map $(\C^\ast)^n\to \CP^\ell$,
defined by Laurent monomials as in~\cite[Proposition 3.1.6]{CLStoric11}. Moreover, the vertices of~$P$ correspond to the $\mathbb{T}$-fixed points of $X(P)$, and a vertex~$v$ of~$P$ is smooth if and only if $X(P)$ is smooth at the corresponding fixed point.

It was shown in~\cite[Corollary 3.5]{ma-pa08} that if a smooth lattice polytope $P$ is combinatorially equivalent to a cube, then the toric variety $X(P)$ is weakly equivariantly diffeomorphic to a Bott manifold (see the footnote in the introduction for Bott manifolds).

\subsection{Bruhat interval polytope} The notion of Bruhat interval polytopes was introduced by Tsukerman and Williams~\cite{ts-wi15} as a natural generalization of permutohedra.
\begin{definition}
For elements $v$ and $w$ in $\mathfrak{S}_{n}$ with $v\leq w$, the \emph{Bruhat interval polytope} $Q_{v,w}$ is the convex hull of all permutation vectors $z=(z(1),z(2),\ldots,z(n))\in\R^n$ with $v\leq z\leq w$.
\end{definition}
 By definition, every Bruhat interval polytope is a lattice polytope and hence it defines a projective toric variety.

Note that the Bruhat interval polytope $Q_{e,w_0}$ is the permutohedron $\mathrm{Perm}_{n-1}$, the convex hull of the $n!$ points obtained by permuting the coordinates of the vector $(1,2,\dots,n)$. Two vertices $(v(1),\ldots,v(n))$ and $(w(1),\ldots,w(n))$ are joined by an edge in the permutohedron if and only if there exists a simple transposition $s_i$ such that $w=s_iv$ (see Figure~\ref{fig_Perm3}). Furthermore, the permutohedron~$\mathrm{Perm}_{n-1}$ defines a  smooth projective toric variety called the \emph{permutohedral variety}. But not every toric variety defined by a Bruhat interval polytope is smooth. For example, the polytope $Q_{1324,3412}$ is not a simple polytope (see Figure~\ref{fig_BIP_1324-3412}).
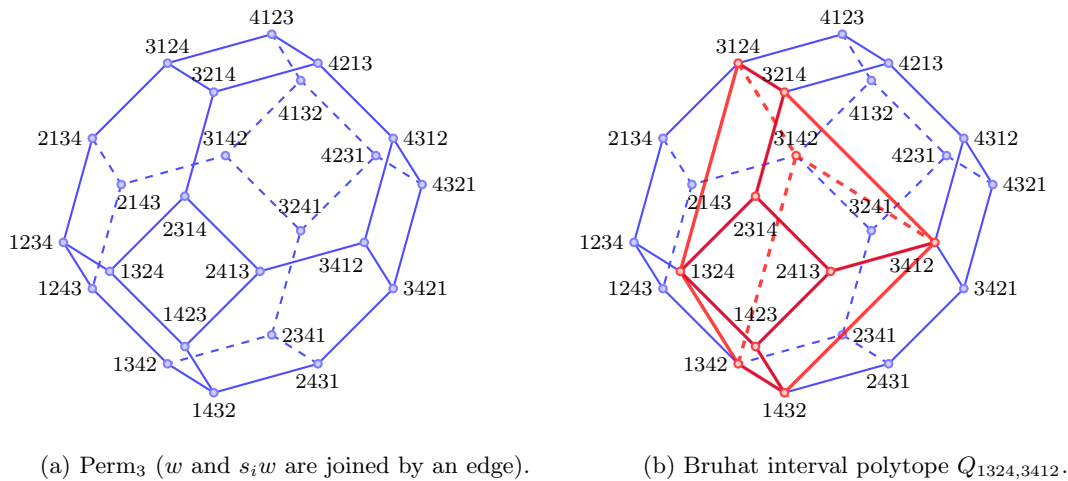
\begin{figure}[H]
\begin{subfigure}[b]{0.49\textwidth}
	\begin{tikzpicture}[scale=6]
	\tikzset{every node/.style={draw=blue!50,fill=blue!20, circle, thick, inner sep=1pt,font=\footnotesize}}
	
\coordinate (3142) at (1/3, 1/2, 1/6);
\coordinate (4231) at (2/3, 1/2, 1/6);
\coordinate (4312) at (5/6, 2/3, 1/2);
\coordinate (4321) at (5/6, 1/2, 1/3);
\coordinate (3421) at (5/6, 1/3, 1/2);
\coordinate (4213) at (2/3, 5/6, 1/2);
\coordinate (1324) at (1/3, 1/2, 5/6);
\coordinate (2413) at (2/3, 1/2, 5/6);
\coordinate (3412) at (5/6, 1/2, 2/3);
\coordinate (2314) at (1/2, 2/3, 5/6);
\coordinate (4123) at (1/2, 5/6, 1/3);
\coordinate (4132) at (1/2, 2/3, 1/6);
\coordinate (3214) at (1/2, 5/6, 2/3);
\coordinate (3124) at (1/3, 5/6, 1/2);
\coordinate (2431) at (2/3, 1/6, 1/2);
\coordinate (1432) at (1/2, 1/6, 2/3);
\coordinate (1423)  at (1/2, 1/3, 5/6);
\coordinate (1342)  at (1/3, 1/6, 1/2);
\coordinate (2341) at (1/2, 1/6, 1/3);
\coordinate (3241) at (1/2, 1/3, 1/6);
\coordinate (1243) at (1/6, 1/3, 1/2);
\coordinate (2143) at (1/6, 1/2, 1/3);
\coordinate (1234) at (1/6, 1/2, 2/3);
\coordinate (2134) at (1/6, 2/3, 1/2);

\draw[thick, draw=blue!70] (4213)--(4312)--(3412)--(2413)--(2314)--(3214)--cycle;
\draw[thick, draw=blue!70] (4312)--(4321)--(3421)--(3412);
\draw[thick, draw=blue!70] (3421)--(2431)--(1432)--(1423)--(2413);
\draw[thick, draw=blue!70] (1423)--(1324)--(2314);
\draw[thick, draw=blue!70] (1432)--(1342)--(1243)--(1234)--(1324);
\draw[thick, draw=blue!70] (1234)--(2134)--(3124)--(3214);
\draw[thick, draw=blue!70] (3124)--(4123)--(4213);

\draw[thick, draw=blue!70, dashed] (2134)--(2143)--(3142)--(4132)--(4123);
\draw[thick, draw=blue!70, dashed] (2143)--(1243);
\draw[thick, draw=blue!70, dashed] (3142)--(3241)--(2341)--(1342);
\draw[thick, draw=blue!70, dashed] (2341)--(2431);
\draw[thick, draw=blue!70, dashed] (3241)--(4231)--(4132);
\draw[thick, draw=blue!70, dashed] (4231)--(4321);

\node [label = {[label distance = 0cm]left:1234}] at (1234) {};
\node[label = {[label distance = 0cm]left:1243}] at (1243) {};
\node[label = {[label distance = 0cm]right:1324}] at (1324) {};
\node[label = {[label distance = 0cm]left:1342}] at (1342) {};
\node [label = {[label distance = 0cm]above:1423}] at (1423) {};
\node[label = {[label distance = -0.2cm]below:1432}] at (1432) {};
\node [label = {[label distance = 0cm]left:2134}] at (2134) {};
\node[label = {[label distance = -0.1cm]below right:2143}] at (2143) {};
\node[label = {[label distance = 0cm]below:2314}] at (2314) {};
\node[label = {[label distance = 0cm]right:2341}] at (2341) {};
\node[label = {[label distance = 0cm]left:2413}] at (2413) {};
\node[label = {[label distance = -0.2cm]below:2431}] at (2431) {};
\node[label = {[label distance = -0.2cm]above:3124}] at (3124) {};
﻿\node[label = {[label distance = -0.2cm]above:3142}] at (3142) {};
\node[label = {[label distance = -0.2cm]above:3214}] at (3214) {};
\node [label = {[label distance = -0.1cm]above:3241}] at (3241) {};
\node[label = {[label distance = 0cm]below left:3412}] at (3412) {};
\node[label = {[label distance = 0cm]right:3421}] at (3421) {};
\node[label = {[label distance = -0.2cm]above:4123}] at (4123) {};
\node [label = {[label distance = 0cm]below:4132}] at (4132) {};
\node[label = {[label distance = 0cm]right:4213}] at (4213) {};
\node[label = {[label distance = 0cm]left:4231}] at (4231) {};
\node[label = {[label distance = 0cm]right:4312}] at (4312) {};
\node [label = {[label distance = 0cm]right:4321}] at (4321) {};
	\end{tikzpicture}
	\subcaption{$\mathrm{Perm}_3$ ($w$ and $s_iw$ are joined by an edge).}
	\label{fig_Perm3}
\end{subfigure}
\begin{subfigure}[b]{0.49\textwidth}
	\begin{tikzpicture}[scale=6]
		\tikzset{every node/.style={draw=blue!50,fill=blue!20, circle, thick, inner sep=1pt,font=\footnotesize}}
\tikzset{red node/.style = {fill=red!20!white, draw=red!75!white}}
\tikzset{red line/.style = {line width=0.3ex, red, nearly opaque}}
	
	\coordinate (3142) at (1/3, 1/2, 1/6);
	\coordinate (4231) at (2/3, 1/2, 1/6);
	\coordinate (4312) at (5/6, 2/3, 1/2);
	\coordinate (4321) at (5/6, 1/2, 1/3);
	\coordinate (3421) at (5/6, 1/3, 1/2);
	\coordinate (4213) at (2/3, 5/6, 1/2);
	\coordinate (1324) at (1/3, 1/2, 5/6);
	\coordinate (2413) at (2/3, 1/2, 5/6);
	\coordinate (3412) at (5/6, 1/2, 2/3);
	\coordinate (2314) at (1/2, 2/3, 5/6);
	\coordinate (4123) at (1/2, 5/6, 1/3);
	\coordinate (4132) at (1/2, 2/3, 1/6);
	\coordinate (3214) at (1/2, 5/6, 2/3);
	\coordinate (3124) at (1/3, 5/6, 1/2);
	\coordinate (2431) at (2/3, 1/6, 1/2);
	\coordinate (1432) at (1/2, 1/6, 2/3);
	\coordinate (1423)  at (1/2, 1/3, 5/6);
	\coordinate (1342)  at (1/3, 1/6, 1/2);
	\coordinate (2341) at (1/2, 1/6, 1/3);
	\coordinate (3241) at (1/2, 1/3, 1/6);
	\coordinate (1243) at (1/6, 1/3, 1/2);
	\coordinate (2143) at (1/6, 1/2, 1/3);
	\coordinate (1234) at (1/6, 1/2, 2/3);
	\coordinate (2134) at (1/6, 2/3, 1/2);
	
	\draw[thick, draw=blue!70] (4213)--(4312)--(3412)--(2413)--(2314)--(3214)--cycle;
	\draw[thick, draw=blue!70] (4312)--(4321)--(3421)--(3412);
	\draw[thick, draw=blue!70] (3421)--(2431)--(1432)--(1423)--(2413);
	\draw[thick, draw=blue!70] (1423)--(1324)--(2314);
	\draw[thick, draw=blue!70] (1432)--(1342)--(1243)--(1234)--(1324);
	\draw[thick, draw=blue!70] (1234)--(2134)--(3124)--(3214);
	\draw[thick, draw=blue!70] (3124)--(4123)--(4213);
	
	\draw[thick, draw=blue!70, dashed] (2134)--(2143)--(3142)--(4132)--(4123);
	\draw[thick, draw=blue!70, dashed] (2143)--(1243);
	\draw[thick, draw=blue!70, dashed] (3142)--(3241)--(2341)--(1342);
	\draw[thick, draw=blue!70, dashed] (2341)--(2431);
	\draw[thick, draw=blue!70, dashed] (3241)--(4231)--(4132);
	\draw[thick, draw=blue!70, dashed] (4231)--(4321);

\node[label = {[label distance = 0cm]right:2341}] at (2341) {};

	\draw[red line] (3214)--(2314)--(2413)--(3412)--cycle;
	\draw[red line] (2314)--(1324)--(1423)--(2413);
	\draw[red line] (1324)--(1342)--(1432)--(1423);
	\draw[red line] (1432)--(3412);
	\draw[red line] (1324)--(3124);
	\draw[red line](3124)--(3214);
	\draw[red line, dashed] (3124)--(3142)--(1342);
	\draw[red line, dashed] (3142)--(3412);
	
\node [label = {[label distance = 0cm]left:1234}] at (1234) {};
\node[label = {[label distance = 0cm]left:1243}] at (1243) {};
\node[label = {[label distance = 0cm]right:1324}, red node] at (1324) {};
\node[label = {[label distance = 0cm]left:1342}, red node] at (1342) {};
\node [label = {[label distance = 0cm]above:1423},red node] at (1423) {};
\node[label = {[label distance = -0.2cm]below:1432}, red node] at (1432) {};
\node [label = {[label distance = 0cm]left:2134}] at (2134) {};
\node[label = {[label distance = -0.1cm]below right:2143}] at (2143) {};
\node[label = {[label distance = 0cm]below:2314}, red node] at (2314) {};
\node[label = {[label distance = 0cm]left:2413}, red node] at (2413) {};
\node[label = {[label distance = -0.2cm]below:2431}] at (2431) {};
\node[label = {[label distance = -0.2cm]above:3124}, red node] at (3124) {};
\node[label = {[label distance = -0.2cm]above:3142}, red node] at (3142) {};
\node[label = {[label distance = -0.2cm]above:3214}, red node] at (3214) {};
\node [label = {[label distance = -0.1cm]above:3241}] at (3241) {};
\node[label = {[label distance = 0cm]below left:3412}, red node] at (3412) {};
\node[label = {[label distance = 0cm]right:3421}] at (3421) {};
\node[label = {[label distance = -0.2cm]above:4123}] at (4123) {};
\node [label = {[label distance = 0cm]below:4132}] at (4132) {};
\node[label = {[label distance = 0cm]right:4213}] at (4213) {};
\node[label = {[label distance = 0cm]left:4231}] at (4231) {};
\node[label = {[label distance = 0cm]right:4312}] at (4312) {};
\node [label = {[label distance = 0cm]right:4321}] at (4321) {};
	\end{tikzpicture}
\subcaption{Bruhat interval polytope $Q_{1324,3412}$.} \label{fig_BIP_1324-3412}
\end{subfigure}
\caption{Permutohedron $\textrm{Perm}_3$ and an example of a Bruhat interval polytope.}\label{fig:1324-3412}
\end{figure}

\section{Relation with Richardson varieties}\label{sec:Relationship with Richardson varieties}

In this section, we review the relation between Bruhat interval polytopes and Richardson varieties, and introduce the connection between combinatorial properties of Bruhat interval polytopes and geometric properties of Richardson varieties.

Let $G = \GL_n(\mathbb{C})$, $B \subset G$ the set of upper triangular matrices, and $\mathbb{T}\subset G$ the set of diagonal matrices. Let $B^- \subset G$ be the set of lower triangular matrices. Then $\mathbb{T} := B \cap B^-$ and $B^-=w_0Bw_0$.
The manifold $G/B$ can be identified with the \emph{flag variety} $\Fl_n$ which is defined to be
\[
\Fl_n := \{ (\{0\} \subset V_1 \subset V_2 \subset \cdots \subset V_n = \C^n) \mid \dim_{\C} V_i = i \quad \text{ for all }i = 1,\dots,n\}.
\]

For an element $w \in \mathfrak{S}_n$, we define the permutation matrix $\begin{bmatrix}\ve_{w(1)} & \cdots & \ve_{w(n)}\end{bmatrix} \in \GL_n(\C)$ where $\ve_1,\dots,\ve_n$ are the standard basis vectors in $\mathbb{R}^n$. We will write it simply $w$ if there is no confusion.
For an element $w \in \mathfrak{S}_n$, we denote the \emph{Schubert variety} $\overline{BwB/B}$ (respectively, the \emph{opposite Schubert variety} $\overline{B^-wB/B}$) in the flag variety $G/B$ by $X_w$ (respectively, $X^w$). The left multiplication by~$\mathbb{T}$ on $G$ induces the $\mathbb{T}$-action on $G/B$ which leaves both $X_w$ and $X^w$ invariant. The set of $\mathbb{T}$-fixed points in $G/B$ bijectively corresponds to the symmetric group $\mathfrak{S}_n$ through the correspondence $u\in \mathfrak{S}_n \to uB \in G/B$. A fixed point $uB$ is contained in $X_w$ if and only if $u \leq w$ in Bruhat order and $uB$ is contained in $X^w$ if and only if $u \geq w$ in Bruhat order (see \cite[\S 10.5]{ful97}).

For elements $v$ and $w \in \mathfrak{S}_{n}$ with $v\leq w$, we define the \emph{Richardson variety}
$X^{v}_w$ by $X^v \cap X_w$. Then $X^e_{w_0}=G/B$, $X^e_{w}=X_w$, and $X^w_{w_0}=X^w$. Furthermore, $X^v_w$ is also $\mathbb{T}$-invariant and the $\mathbb{T}$-fixed points of $X^v_w$ correspond to the elements in the Bruhat interval $[v,w]$. It is known that
\begin{equation}\label{eq:dim_schubert}
\dim_\C X^v_w =\ell(w)-\ell(v),
\end{equation}
see~\cite{Brion_Lecture}.

The full flag variety $G/B$ has the symplectic form $\omega_{\lambda}$ due to Kirillov, Kostant, and Souriau for a regular dominant weight $\lambda$. When we choose the weight $\lambda$ as the sum of all fundamental weights, the permutohedron~$\textrm{Perm}_{n-1}$ is the moment map image of~$G/B$ (see, for example,~\cite[Corollary IV.4.11]{AudinTorus} and references therein).
Now we describe the moment map $\mu\colon G/B\to \R^n$ explicitly using the Pl\"{u}cker coordinates. We define the set
\[
I_{d,n} = \{\underline{\mathbf{i}} = (i_1,\dots,i_d) \in \Z^d \mid 1 \leq i_1 < \cdots < i_d \leq n \}.
\]
For an element $x = (x_{ij}) \in G = \GL_n(\C)$, the $\underline{\mathbf{i}}$th Pl\"{u}cker coordinate $p_{\underline{\mathbf{i}}}(x)$ of~$x$ is given by the $d \times d$ minor of~$x$, with row indices $i_1,\dots,i_d$ and the column indices $1,\dots,d$ for $\underline{\mathbf{i}} = (i_1,\dots,i_d) \in I_{d,n}$. Then the Pl\"{u}cker embedding is defined to be
\begin{equation}\label{eq_Plucker_embedding}
\psi \colon G/B \to \prod_{d=1}^{n-1} \mathbb{C}P^{{n\choose d}-1},
\quad xB \mapsto \prod_{d=1}^{n-1} (p_{\underline{\mathbf{i}}}(x))_{\underline{\mathbf{i}} \in I_{d,n}}.
\end{equation}
The map $\psi$ is $\mathbb{T}$-equivariant with respect to the action of
$\mathbb{T}$ on $\prod_{d=1}^{n-1} \mathbb{C} P^{{n\choose d}-1}$ given by
\[
(t_1,\dots,t_n) \cdot (p_{\underline{\mathbf{i}}})_{\underline{\mathbf{i}} \in I_{d,n}}
:= (t_{i_1}\cdots t_{i_d} \cdot p_{\underline{\mathbf i}})_{\underline{\mathbf{i}} \in I_{d,n}}
\]
for $(t_1,\dots,t_n) \in \mathbb{T}$ and $\underline{\mathbf{i}} = (i_1,\dots,i_d)$. Then the moment map $\tilde\mu\colon \prod_{d=1}^{n-1}\CP^{{n\choose d}-1} \to \R^n$ is given by
\begin{equation}\label{eq:moment-map}
(p_{\underline{\mathbf{i}}})_{\underline{\mathbf{i}}\in I_{d,n}} \mapsto -\sum_{d=1}^{n-1}\left\{\frac{1}{\sum_{\underline{\mathbf{i}}\in I_{d,n}} | p_{\underline{\mathbf{i}}}|^2}\left(\sum_{1\in \underline{\mathbf{i}}\in I_{d,n}}|p_{\underline{\mathbf{i}}}|^2,\ldots,\sum_{n\in \underline{\mathbf{i}}\in I_{d,n}}|p_{\underline{\mathbf{i}}}|^2\right)\right\} + \mathbf{c},
\end{equation}  where $\mathbf{c}$ is a constant vector. By setting $\mathbf{c}=(n,\ldots,n)$ in~\eqref{eq:moment-map} and $\mu := \tilde{\mu} \circ \psi$, we can see the following.
\begin{lemma}\label{lem:moment_map}
	The moment map $\mu$ sends the fixed point $uB \in G/B$ to $(u^{-1}(1),\dots,u^{-1}(n)) \in \mathbb{R}^n$.
\end{lemma}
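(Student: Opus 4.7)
The plan is to compute the composition $\mu = \tilde{\mu}\circ \psi$ directly on the $\mathbb{T}$-fixed point $uB$, exploiting the fact that a permutation matrix has particularly sparse Pl\"ucker coordinates.

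First I would unpack $\psi(uB)$ for the permutation matrix $u = [\mathbf{e}_{u(1)}\,|\,\cdots\,|\,\mathbf{e}_{u(n)}]$. For $\underline{\mathbf{i}} = (i_1,\ldots,i_d) \in I_{d,n}$, the Pl\"ucker coordinate $p_{\underline{\mathbf{i}}}(u)$ is the $d\times d$ minor with row indices $i_1<\cdots<i_d$ and column indices $1,\ldots,d$. Since the $(i,j)$ entry of $u$ equals $1$ if $i=u(j)$ and $0$ otherwise, this minor vanishes unless $\{i_1,\ldots,i_d\}=\{u(1),\ldots,u(d)\}$ as sets, and in that unique case its absolute value is $1$. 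Let $\underline{\mathbf{i}}^{(d)}$ denote the sorted tuple obtained from $(u(1),\ldots,u(d))$; then for each fixed $d$ the only nonzero Pl\"ucker coordinate is $p_{\underline{\mathbf{i}}^{(d)}}(u)$ with $|p_{\underline{\mathbf{i}}^{(d)}}(u)|=1$.

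Next I would substitute into \eqref{eq:moment-map}. For each $d$, the denominator $\sum_{\underline{\mathbf{i}}\in I_{d,n}}|p_{\underline{\mathbf{i}}}|^2$ equals $1$, and the $k$-th numerator $\sum_{k\in\underline{\mathbf{i}}\in I_{d,n}}|p_{\underline{\mathbf{i}}}|^2$ equals $1$ exactly when $k\in\{u(1),\ldots,u(d)\}$ and $0$ otherwise. Hence the $k$-th component of $\tilde{\mu}(\psi(uB))$, before adding the constant $\mathbf{c}=(n,\ldots,n)$, is
\[
-\#\bigl\{d : 1\leq d\leq n-1,\ k\in\{u(1),\ldots,u(d)\}\bigr\}.
\]

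Finally, writing $j = u^{-1}(k)$, the condition $k\in\{u(1),\ldots,u(d)\}$ is equivalent to $d\geq j$, so the counted set is $\{j,j+1,\ldots,n-1\}$ of size $n-j$. The $k$-th coordinate therefore equals $-(n-j)+n = j = u^{-1}(k)$, which is exactly the claim. No substantial obstacle is expected; the only point requiring care is correctly identifying the surviving Pl\"ucker coordinate for a permutation matrix and confirming that the additive constant $\mathbf{c}=(n,\ldots,n)$ in \eqref{eq:moment-map} produces the clean expression $(u^{-1}(1),\ldots,u^{-1}(n))$ rather than a shifted version.
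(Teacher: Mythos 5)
Your proposal is correct and follows essentially the same route as the paper: identify the single nonvanishing Pl\"ucker coordinate of the permutation matrix in each degree $d$, observe that the $d$th summand contributes $-1$ to the $k$th coordinate exactly when $k\in\{u(1),\dots,u(d)\}$, and count to get $-(n-u^{-1}(k))+n=u^{-1}(k)$. The only difference is cosmetic bookkeeping (you track the $k$th entry via $j=u^{-1}(k)$, the paper tracks the $u(k)$th entry), so there is nothing further to add.
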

\begin{proof}
	For a permutation $u \in \mathfrak{S}_n$, the Pl\"{u}cker coordinates $(p_{\underline{\mathbf{i}}})_{\underline{\mathbf{i}} \in I_{d,n}}$ of $uB$ are given as follows:
		\[
		p_{\underline{\mathbf{i}}} = \begin{cases} 1 & \text{ if }\underline{\mathbf{i}} = \{u(1),\dots,u(d)\}\!\uparrow,\\
		0 & \text{ otherwise,}
		\end{cases}
		\]
		for each $\underline{\mathbf{i}} \in I_{d,n}.$ Here, for a subset $S\subset [n]$, we denote by $S\!\uparrow$ the ordered tuple obtained from $S$ by sorting its elements in ascending order.
	Therefore one can see that for a fixed $d \in [n-1]$, $\sum_{\underline{\mathbf{i}}\in I_{d,n}} |p_{\underline{\mathbf{i}}}|^2=1$ and the vector
	\[
	\left( \sum_{1 \in \underline{\mathbf{i}}\in I_{d,n}} |p_{\underline{\mathbf{i}}}|^2, \dots, \sum_{n \in \underline{\mathbf{i}}\in I_{d,n}} |p_{\underline{\mathbf{i}}}|^2 \right)
	\]
	becomes an integer vector whose entries are $1$ for coordinates in $\{u(1),\dots,u(d)\}$ and $0$ otherwise.
	Hence the summation
	\[
	-\sum_{d=1}^{n-1} 
	\left\{ \frac{1}{\sum_{\underline{\mathbf{i}}\in I_{d,n}} | p_{\underline{\mathbf{i}}}|^2}\left( \sum_{1 \in \underline{\mathbf{i}}\in I_{d,n}} |p_{\underline{\mathbf{i}}}|^2, \dots, \sum_{n \in \underline{\mathbf{i}}\in I_{d,n}} |p_{\underline{\mathbf{i}}}|^2 \right)
	\right\}
	\]
	is an integer vector such that the $u(k)$-entry is $-(n-k)$. Therefore, the moment map image $\mu(uB)$ is an integer vector whose $u(k)$-entry is $k$ since $\mathbf{c}=(n,\cdots,n)$ in~\eqref{eq_Plucker_embedding}. This implies that $\mu(uB) = (u^{-1}(1),\dots,u^{-1}(n))$ since $u^{-1}(u(k)) = k$ for all $k$.
\end{proof}

It follows from Lemma~\ref{lem:moment_map} that for $v$ and $w \in\mathfrak{S}_n$ with $v\leq w$, the Bruhat interval polytope $Q_{v,w}$ is the moment map image of the Richardson variety $X^{v^{-1}}_{w^{-1}}.$

\begin{example}\label{example_Plucker_GL3}
Suppose that $G = \GL_3(\mathbb{C})$. Then the Pl\"{u}cker embedding
$\psi \colon {G/B} \to \C {P}^{{3\choose 1}-1} \times \C {P}^{{3\choose 2}-1}$ maps an element $x = (x_{ij}) \in \GL_3(\mathbb{C})$
to
\begin{align*}
&([p_1(x),p_2(x),p_3(x)], [p_{1,2}(x), p_{1,3}(x), p_{2,3}(x)]) \\
&\qquad = ([x_{11}, x_{21}, x_{31}], [x_{11} x_{22} - x_{21}x_{12}, x_{11}x_{32} - x_{31} x_{12}, x_{21}x_{32} - x_{31}x_{22}]).
\end{align*}
{Since the action of $\mathbb{T}$ on $\GL_3(\C)$ is given by}
\begin{align*}
(t_1,t_2,t_3) \cdot \begin{pmatrix}
x_{11} & x_{12} & x_{13} \\
x_{21} & x_{22} & x_{23} \\
x_{31} & x_{32} & x_{33}
\end{pmatrix}
= \begin{pmatrix}
t_1 x_{11} & t_1 x_{12} & t_1 x_{13} \\
t_2 x_{21} & t_2 x_{22} & t_2 x_{23} \\
t_3 x_{31} & t_3 x_{32} & t_3 x_{33}
\end{pmatrix},
\end{align*}
{one can easily check that the map $\psi$ is $\mathbb{T}$-equivariant.} The moment map $\tilde\mu\colon\CP^{{3\choose 1}-1}\times \CP^{{3\choose 2}-1}\to \R^3$ is given by
\begin{equation*}
\begin{split}
&([p_1,p_2,p_3],[p_{12},p_{13},p_{23}])\\&\qquad\mapsto -\frac{1}{|p_1|^2+|p_2|^2+|p_3|^2}\left({|p_1|^2},{|p_2|^2},{|p_3|^2}\right)\\&\qquad\quad -\frac{1}{|p_{12}|^2+|p_{13}|^2+|p_{23}|^2}\left({|p_{12}|^2+|p_{13}|^2},{|p_{12}|^2+|p_{23}|^2},{|p_{13}|^2+|p_{23}|^2}\right)\\
&\qquad\quad+(3,3,3).
\end{split}
\end{equation*} Then one can see that \[\mu(312B)=\tilde\mu\circ\psi(312B)=\tilde\mu(([0,0,1],[0,1,0]))=-(0,0,1)-(1,0,1)+(3,3,3)=(2,3,1).
\]
\end{example}

We call a $\mathbb{T}$-orbit in $X^v_w$ \emph{generic} if its closure contains all the $\mathbb{T}$-fixed points in $X^v_w$ and call a point in $X^v_w$ \emph{generic} if it is in a generic $\mathbb{T}$-orbit. We will denote the closure of a generic $\mathbb{T}$-orbit in $X^v_w$ by $Y^v_w$. Then $Y^v_w$ is the projective toric variety defined by the polytope $Q_{v^{-1},w^{-1}}$. Hence it follows from~\eqref{eq:dim_schubert} that
\begin{equation}\label{eq_dim_Qvw_and_length}
\dim Q_{v,w}=\dim_\C Y^{v^{-1}}_{w^{-1}}\leq \dim_\C X^{v^{-1}}_{w^{-1}}=\ell(w^{-1})-\ell(v^{-1})=\ell(w)-\ell(v).
\end{equation}
Motivated by this observation, we introduce the following terminology.
\begin{definition}
	The Bruhat interval polytope $Q_{v,w}$ is called \emph{toric} if $\dim Q_{v,w}=\ell(w)-\ell(v)$.
\end{definition}

Equation~\eqref{eq_dim_Qvw_and_length} implies that the Richardson variety $X^v_w$ is a toric variety, that is, $X^v_w=Y^v_w$, if and only if the Bruhat interval polytope $Q_{v^{-1},w^{-1}}$ is toric.
In general, Bruhat interval polytopes $Q_{v,w}$ and $Q_{v^{-1},w^{-1}}$ are not combinatorially equivalent even though  $x \lessdot y$ is a cover in $[v,w]$ if and only if $x^{-1}\lessdot y^{-1}$ is a cover in $[v^{-1},w^{-1}]$ (see~Remark~\ref{rmk_BIP_inverses_are_not_combinatorially_equivalent}).
But they have the same dimension. 
We get the following whose proof will be given in the next section.
\begin{proposition}\label{lem:dim}
	Two Bruhat interval polytopes $Q_{v,w}$ and $Q_{v^{-1},w^{-1}}$ have the same dimension. In particular, $X^v_w$ is a toric variety if and only if so is $X^{v^{-1}}_{w^{-1}}$. Indeed, the Richardson variety $X^v_w$ is a toric variety if and only if $Q_{v,w}$ is toric.
\end{proposition}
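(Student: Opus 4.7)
The strategy is to obtain a dimension formula for $Q_{v,w}$ in terms of a graph on $[n]$ associated to the Bruhat interval $[v,w]$, and then to show that this invariant is symmetric under $z \mapsto z^{-1}$. Let $\Gamma^R_{v,w}$ be the graph on $[n]$ whose edges are the position pairs $\{a,b\}$ of Bruhat covers $u \lessdot u t_{a,b}$ in $[v,w]$. A direct calculation gives $\vec{u t_{a,b}} - \vec u = (u(b)-u(a))(e_a - e_b)$, and since every $z \in [v,w]$ is joined to $v$ by a chain of covers, the affine span of $Q_{v,w}$ equals $\vec v$ plus the linear span of $\{e_a - e_b : \{a,b\} \in E(\Gamma^R_{v,w})\}$. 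The standard rank formula for such vectors then yields $\dim Q_{v,w} = n - c(\Gamma^R_{v,w})$, where $c(\cdot)$ denotes the number of connected components.

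\textbf{Reduction via the inverse bijection.} The poset isomorphism $[v,w] \to [v^{-1},w^{-1}]$, $z \mapsto z^{-1}$, carries the cover $u \lessdot u t_{a,b}$ to the cover $u^{-1} \lessdot (u t_{a,b})^{-1} = u^{-1} t_{u(a),u(b)}$, using the identity $t_{a,b}\, u^{-1} = u^{-1}\, t_{u(a),u(b)}$. Hence the position pairs of covers in $[v^{-1},w^{-1}]$ are exactly the value pairs $\{u(a),u(b)\}$ arising from covers of $[v,w]$; let $\Gamma^L_{v,w}$ denote the graph on $[n]$ with those edges. Then $\Gamma^R_{v^{-1},w^{-1}} = \Gamma^L_{v,w}$, and by the symmetric argument $\Gamma^L_{v^{-1},w^{-1}} = \Gamma^R_{v,w}$. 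The proposition therefore reduces to the identity $c(\Gamma^R_{v,w}) = c(\Gamma^L_{v,w})$.

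\textbf{Main step and obstacle.} Let $H^R, H^L \leq \mathfrak{S}_n$ be the subgroups generated by the transpositions corresponding to the edges of $\Gamma^R_{v,w}$ and $\Gamma^L_{v,w}$, respectively; because each group is generated by transpositions, its orbits on $[n]$ are exactly the connected components of the corresponding graph. The key identity to establish is $H^L = v H^R v^{-1}$, which at once gives equal orbit counts. For the inclusion $H^L \subseteq v H^R v^{-1}$, a typical generator $t_{u(a),u(b)} = u\, t_{a,b}\, u^{-1}$ of $H^L$ factors as $v \cdot (v^{-1}u)\, t_{a,b}\, (v^{-1}u)^{-1} \cdot v^{-1}$, and a Bruhat chain $v = u_0 \lessdot \cdots \lessdot u_k = u$ inside $[v,w]$ expresses $v^{-1}u$ as a product of right-multiplication transpositions of covers of $[v,w]$, each a generator of $H^R$; hence $v^{-1}u \in H^R$ and the generator lies in $v H^R v^{-1}$. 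The reverse inclusion follows by running the same argument with $[v,w]$ replaced by $[v^{-1},w^{-1}]$ and invoking the two identifications of the previous paragraph, which swap the roles of $H^R$ and $H^L$. The dimension equality then follows, and the final ``in particular'' assertion is immediate from~\eqref{eq_dim_Qvw_and_length} combined with~\eqref{eq:dim_schubert}, since $\dim_{\mathbb C} X^v_w = \dim_{\mathbb C} X^{v^{-1}}_{w^{-1}} = \ell(w)-\ell(v)$. The main obstacle is the identity $H^L = v H^R v^{-1}$; its content is precisely that every cover of $[v,w]$ can be transported back to the base vertex $v$ using only right-multiplication generators of $\Gamma^R_{v,w}$, which in turn is just the connectedness of the Hasse diagram of $[v,w]$.
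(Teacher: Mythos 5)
Your proof is correct, but it takes a genuinely more global route than the paper's. The paper invokes the Tsukerman--Williams dimension formula $\dim Q_{v,w}=n-\#B(G^{v,w}_v)$ (equation~\eqref{eq:dim-BIP}), which reads off the dimension from the covers at the single vertex $v$ alone; since all of those covers are relabeled by the \emph{same} bijection $v\colon[n]\to[n]$ under inversion (via the identity $(i,j)v^{-1}=v^{-1}(v(i),v(j))$), the graphs $G^{v,w}_v$ and $G^{v^{-1},w^{-1}}_{v^{-1}}$ are literally isomorphic and the proof is three lines. You instead build the graph out of \emph{all} covers in $[v,w]$, so the edges of $\Gamma^R_{v^{-1},w^{-1}}$ are relabelings of those of $\Gamma^R_{v,w}$ by the varying bijections $u$; this is exactly why you are forced into the group-theoretic step $H^L=vH^Rv^{-1}$ and the chaining of each $u$ back to $v$ through the Hasse diagram. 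Your version buys self-containedness: the affine-span computation re-derives the needed dimension formula from scratch instead of citing \cite{ts-wi15}, and the conjugation identity is a clean way to see that the value-pair and position-pair graphs have the same component count. The paper's version buys brevity by localizing at $v$. All the individual steps you use check out --- the edge-vector identity, the cover identity $t_{a,b}u^{-1}=u^{-1}t_{u(a),u(b)}$, the orbit/component dictionary for subgroups generated by transpositions, and the reduction of the ``in particular'' clause to \eqref{eq_dim_Qvw_and_length} together with \eqref{eq:dim_schubert} --- so no gap, just a longer path.
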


\section{Properties of Bruhat interval polytopes}\label{sec:Properties of Bruhat interval polytopes}
In this section, we review some notations and facts about Bruhat interval polytopes and related graphs introduced in~\cite{le-ma18} and~\cite{ts-wi15}. Then we interpret  combinatorial properties of Bruhat interval polytopes using these graphs.
Using this interpretation, we provide a proof of Proposition~\ref{lem:dim} which shows that Bruhat interval polytopes $Q_{v,w}$ and $Q_{v^{-1}, w^{-1}}$ have the same dimension.

We first set up notations and terminologies related to digraphs (or directed graphs). A \emph{digraph} is an ordered pair $G = (V(G), E(G))$, where
\begin{itemize}
\item $V(G)$ is a set whose elements are called \emph{vertices}, and
\item $E(G)$ is a multiset of ordered pairs of vertices, called \emph{directed edges}.
\end{itemize}
For two vertices $i$ and $j$ of a given graph $G$, $i$ can \emph{reach}  $j$ if there is a (directed) path from $i$ to $j$.
A digraph $G$ is said to be \emph{acyclic} if there is no directed cycle.
The underlying graph of~$G$ is the undirected graph created using all of the vertices in $V(G)$ and replacing all directed edges in $E(G)$ with undirected edges.
A digraph is connected (or weakly connected) if the underlying graph is a connected graph.  Hence if $i$ can reach $j$, then $i$ and $j$ are connected, but the converse is not true in general.
If $V(G)=[n]$, then we can define $B(G)$ to be a partition of the set $[n]$ such that each block corresponds to a connected component of~$G$.

Let $v,w\in\mathfrak{S}_n$ with $v\leq w$. For $u\in [v,w]$, we define the following two sets:
\begin{align*}
&\overline{T}(u,[v,w])=\{t\in T\mid \exists\, z \stackrel{t}{\gtrdot}u,\,z\in [v,w]\}=\{t\in T\mid u\lessdot ut\leq w\} \text{ and }\\
&\underline{T}(u,[v,w])=\{t\in T\mid \exists\, z \stackrel{t}{\lessdot}u,\,z\in [v,w]\}=\{t\in T\mid v\leq ut\lessdot u\}.
\end{align*}
Here, $T$ is the set of transpositions (see~\eqref{eq_def_of_T}). The digraph $G_u^{v,w}$ is defined as follows:
\begin{enumerate}
	\item The vertices of $G_{u}^{v,w}$ are $\{1,2,\ldots,n\}$.
	\item There is a directed edge from $i$ to $j$ for every $(i,j)\in\overline{T}(u,[v,w])$.
	\item There is a directed edge from $j$ to $i$ for every $(i,j)\in \underline{T}(u,[v,w])$.
\end{enumerate}  Then the dimension of the Bruhat interval polytope $Q_{v,w}$ is determined by the number of blocks of the partition determined by the graph $G_{v}^{v,w}$ or $G_w^{v,w}$, that is,
\begin{equation}\label{eq:dim-BIP}
\dim Q_{v,w}=n-\# B(G_{v}^{v,w})=n-\#B(G_{w}^{v,w}),
\end{equation}
see Theorem~{4.6} and Proposition~{4.10} of~\cite{ts-wi15}.

\begin{example}\label{eg:atom-graph}
  Let $[v,w]=[1324,4231]$. Then the digraph $G_{v}^{v,w}$ is connected (see Figure~\ref{fig:graph_G_at}) and the Bruhat interval polytope $Q_{v,w}$ is of dimension~3.
  
  \begin{figure}[h]
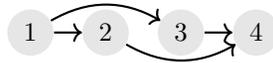

		\centering
		\tikz{
			\node[fill=gray!20] (1) at (0,0) [circle] {$1$};
			\node[fill=gray!20] (2) at (1,0) [circle] {$2$};
			\node[fill=gray!20] (3) at (2,0) [circle] {$3$};
			\node[fill=gray!20] (4) at (3,0) [circle] {$4$};
			\draw  (1) edge[->,thick] (2)		(2) edge[->,thick, bend right] (4) (1) edge[->,thick, bend left] (3) (3) edge [->,thick] (4);
		}
		\caption{Graph $G_{1324}^{1324,4231}$.}
		\label{fig:graph_G_at}
	\end{figure}
\end{example}

Now we give the proof of Proposition~\ref{lem:dim} which claims that two Bruhat interval polytope $Q_{v,w}$ and $Q_{v^{-1},w^{-1}}$ have the same dimension.

\begin{proof}[Proof of Proposition~\ref{lem:dim}]
Let $\overline{T}(v,[v,w])=\{(i_1,j_1),\,(i_2,j_2),\,\ldots,\,(i_k,j_k)\}$. Since $\underline{T}(v,[v,w])=\emptyset$, we get $E(G_{v}^{v,w})=\overline{T}(v,[v,w])$. Since $(i,j)v^{-1}=v^{-1}(v(i),v(j))$ for every transposition $(i,j)\in T$, we have that
$$E(G_{v^{-1}}^{v^{-1},w^{-1}})=\overline{T}(v^{-1},[v^{-1},w^{-1}])=\{(v(i_1),v(j_1)),\,(v(i_2),v(j_2)),\,\ldots,\,(v(i_k),v(j_k))\}.$$ Since $v$ is a bijection on $[n]$, the graph $G_{v}^{v,w}$ is isomorphic to $G_{v^{-1}}^{v^{-1},w^{-1}}$, and hence the partitions $B(G_{v}^{v,w})$ and $B(G_{v^{-1}}^{v^{-1},w^{-1}})$ consist of the same number of blocks. Therefore, $\dim Q_{v,w}=\dim Q_{v^{-1},w^{-1}}$ by~\eqref{eq:dim-BIP}.
\end{proof}

Even though $Q_{v,w}$ and $Q_{v^{-1}, w^{-1}}$ can have the same dimension, their face structures cannot be compared with each other in general (see~Remark~\ref{rmk_BIP_inverses_are_not_combinatorially_equivalent}). Now, we define a digraph $G_{x,y}^{v,w}$ for each $[x,y]\subset [v,w]$ and use it to verify that $Q_{x,y}$ is a face of $Q_{v,w}$.
The digraph $G_{x,y}^{v,w}$ is defined as follows:
\begin{enumerate}
	\item The vertices of $G^{v,w}_{x,y}$ are $\{1,2,\ldots,n\}$, with nodes $i$ and $j$ identified if they are in the same block of~$B(G_x^{x,y})$.
	\item There is a directed edge from $i$ to $j$ for every $(i,j)\in\overline{T}(y,[v,w])$.
	\item There is a directed edge from $j$ to $i$ for every $(i,j)\in \underline{T}(x,[v,w])$.
\end{enumerate}
It has been known from~\cite[Theorem 4.1]{ts-wi15} that every face of a Bruhat interval polytope is itself a Bruhat interval polytope.
 Moreover, for $[x,y] \subset [v,w]$, one can determine whether the Bruhat interval polytope $Q_{x,y}$ is a face of $Q_{v,w}$ by checking the acyclicity of $G_{x,y}^{v,w}$.

\begin{theorem}[{\cite[Theorem 4.19]{ts-wi15}}]\label{def:face-graph}
For $[x,y]\subset [v,w]$, the  Bruhat interval polytope $Q_{x,y}$ is a face of the Bruhat interval polytope $Q_{v,w}$ if and only if the graph $G^{v,w}_{x,y}$ is an acyclic digraph.
\end{theorem}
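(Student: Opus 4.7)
The plan is to translate ``$Q_{x,y}$ is a face of $Q_{v,w}$'' into the existence of a linear functional $c\in\R^n$ that attains its maximum on $Q_{v,w}$ precisely along $Q_{x,y}$, and then to translate the conditions on such a $c$ into an acyclic ordering of $G_{x,y}^{v,w}$. The key ingredients already available are: (i) every face of $Q_{v,w}$ is itself a Bruhat interval polytope, by \cite[Theorem 4.1]{ts-wi15}; (ii) the affine span of $Q_{x,y}$ is spanned by the vectors $\mathbf{e}_i-\mathbf{e}_j$ with $i,j$ in the same block of $B(G_x^{x,y})$, via the dimension formula~\eqref{eq:dim-BIP}; and (iii) for a Bruhat cover $u\lessdot ut$ with $t=(i,j)$, $i<j$, the edge vector $ut-u$ equals $(u(j)-u(i))(\mathbf{e}_i-\mathbf{e}_j)$, a positive multiple of $\mathbf{e}_i-\mathbf{e}_j$, so the sign of $c\cdot ut-c\cdot u$ matches that of $c_i-c_j$.

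For necessity, pick $c$ in the relative interior of the normal cone of the face $Q_{x,y}$, so $c\cdot y=c\cdot x=:C$ and $c\cdot u<C$ for every $u\in[v,w]\setminus[x,y]$; by (ii), $c$ descends to a well-defined function on the vertex set of $G_{x,y}^{v,w}$. For each edge $i\to j$ in $G_{x,y}^{v,w}$ arising from $(i,j)\in\overline{T}(y,[v,w])$, the vertex $yt$ lies in $[v,w]\setminus[x,y]$ (since $yt>y$ cannot be in $[x,y]$), and (iii) turns $c\cdot yt<C$ into $c_i<c_j$. The symmetric analysis at $x$ handles edges coming from $\underline{T}(x,[v,w])$. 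Thus $c$ supplies a strict linear extension of $G_{x,y}^{v,w}$, which must therefore be acyclic.

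For sufficiency, a topological sort of $G_{x,y}^{v,w}$ yields $c\in\R^n$ that is constant on blocks of $B(G_x^{x,y})$ and strictly increases along every edge; by (ii), $c$ is constant on $Q_{x,y}$ with some value $C$. Using (iii), I verify that $y$ (and, symmetrically, $x$) is a local maximum of $c$ on $Q_{v,w}$: upward Bruhat-cover neighbors of $y$ lying outside $[x,y]$ are precisely the $yt$ with $(i,j)\in\overline{T}(y,[v,w])$, and the corresponding edges of $G_{x,y}^{v,w}$ give $c_i<c_j$, hence $c\cdot yt<C$. Since $c$ is linear on the convex polytope $Q_{v,w}$, local max equals global max, so $Q_{x,y}\subseteq F_c$ where $F_c$ is the face of $Q_{v,w}$ maximizing $c$. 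By (i), $F_c=Q_{x',y'}$ for some $[x',y']\supseteq[x,y]$. If $y<y'$, a cover $y\lessdot z\leq y'$ would give $z\in F_c$ and $c\cdot z=C$, but this cover contributes an edge of $G_{x,y}^{v,w}$ from $\overline{T}(y,[v,w])$ on which $c_i<c_j$, contradicting $c\cdot z=c\cdot y$; symmetrically $x'=x$, so $F_c=Q_{x,y}$.

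The main obstacle I anticipate is the local-maximality check in the sufficiency direction. Downward Bruhat-cover neighbors $yt$ of $y$ that leave $[x,y]$ correspond to transpositions in $\underline{T}(y,[v,w])\setminus\underline{T}(y,[x,y])$ and are not directly encoded by $G_{x,y}^{v,w}$, so one must argue by hand that the topological-sort ordering still forces $c\cdot yt\leq C$. This requires the combinatorial interplay between $B(G_x^{x,y})$ and the edges contributed by $\underline{T}(x,[v,w])$: for such a transposition $(i,j)$, either $i$ and $j$ lie in the same block of $B(G_x^{x,y})$ (so $c_i=c_j$ and the $c$-change vanishes) or $(i,j)$ reappears as an edge $j\to i$ of $G_{x,y}^{v,w}$ (forcing the required sign $c_i\geq c_j$). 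Pinning down this dichotomy by a Bruhat-order argument is the technical heart of the theorem; once it is in hand, the remainder is a clean squeeze via (i).
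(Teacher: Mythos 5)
The paper does not actually prove this statement; it is quoted directly from \cite[Theorem 4.19]{ts-wi15}, so your proposal is competing with the original proof there (which rests on a chain of combinatorial lemmas), not with anything in the present manuscript. That said, your overall strategy -- realizing the face condition by a supporting linear functional and matching it with a linear extension of $G^{v,w}_{x,y}$ -- is the right one, and your necessity direction is complete: a functional $c$ in the relative interior of the normal cone of the face $Q_{x,y}$ is forced to be constant on blocks of $B(G^{x,y}_x)$ (the edge directions of $Q_{x,y}$ at $x$ span its affine hull, and by \eqref{eq:dim-BIP} that span is exactly $\mathrm{span}\{\mathbf{e}_i-\mathbf{e}_j : i\sim j\}$), and it must strictly increase along every directed edge of $G^{v,w}_{x,y}$, which excludes directed cycles, including loops created by the identification.

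The sufficiency direction has a genuine gap, and it is exactly the one you flag. A topological sort controls $c_i-c_j$ only for pairs coming from $\overline{T}(y,[v,w])$ and $\underline{T}(x,[v,w])$ (plus pairs within a block). To make $y$ a local maximum of $c$ on the $1$-skeleton of $Q_{v,w}$ you must, by Corollary~\ref{cor:edge_vectors}, also control the edges at $y$ indexed by $\underline{T}(y,[v,w])$ whose endpoints leave $[x,y]$, and symmetrically at $x$ those indexed by $\overline{T}(x,[v,w])$ leaving $[x,y]$; these transpositions are simply not edges of $G^{v,w}_{x,y}$, so an arbitrary linear extension can violate the required inequalities. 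Your proposed dichotomy is asserted, not proved, and as stated it is not quite the right claim: its first branch yields $c\cdot yt=C$, which places $yt\notin[x,y]$ on the maximizing face unless one separately shows that this branch forces $yt\in[x,y]$; and in general one should only expect the needed inequality $c_j\le c_i$ to follow from a directed \emph{path} in $G^{v,w}_{x,y}$ (its transitive closure), not from a single reappearing edge -- witness that in Example~\ref{example:inverse-acyclic} the downward covers $2431\gtrdot 2341$ and $2431\gtrdot 2413$ happen to be handled by edges of $\underline{T}(x,[v,w])$, but nothing in your argument guarantees this in general. Proving that the constraints encoded in $G^{v,w}_{x,y}$ imply all the missing ones is a nontrivial Bruhat-order argument and is precisely the technical content of the lemmas preceding Theorem 4.19 in \cite{ts-wi15}; until it is supplied, the proof is incomplete.
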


\begin{example}\label{example:inverse-acyclic}
  Let $[v,w]=[1324,4231]$ and consider $[x,y]=[1432,2431] \subset [v,w]$.  Note that $[v^{-1},w^{-1}]=[v,w]$ and $[x^{-1},y^{-1}]=[1432,4132]$. Since $y=x(1,4)$, $B(G_{x}^{x,y})=14|2|3$ and $B(G_{x^{-1}}^{x^{-1},y^{-1}})=12|3|4$. We have that
  \[
  \overline{T}(y,[v,w])=\{(1,2)\} \quad \text{ and } \quad \underline{T}(x,[v,w])=\{(2,3),(3,4)\},
  \]
  and the graph $G^{v,w}_{x,y}$ is acyclic (see Figure~\ref{fig_acyclic_1324_4231}). On the other hand, we get
  \[
  \overline{T}(y^{-1},[v^{-1},w^{-1}])=\{(2,4)\} \quad \text{ and } \quad \underline{T}(x^{-1},[v^{-1},w^{-1}])=\{(2,3),(3,4)\},
  \]
  so that the graph $G^{v^{-1},w^{-1}}_{x^{-1},y^{-1}}$ is a directed cycle (see Figure~\ref{fig_cycle_1324_4231}).
 \begin{figure}[h!]
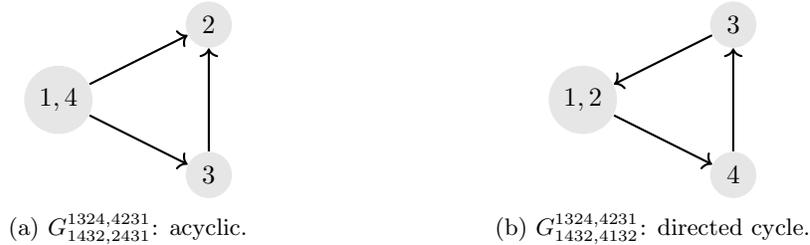

 \centering
 \begin{subfigure}[b]{.45\textwidth}
 \centering
\tikz{
\node[fill=gray!20] (a) at (0,0) [circle] {$1,4$};
\node[fill=gray!20] (b) at (2,1) [circle] {$2$};
\node[fill=gray!20] (c) at (2,-1) [circle] {$3$};
\draw (a) edge[->,thick] (b) (a) edge[->,thick] (c)
(c) edge[->,thick] (b);
}
\subcaption{$G^{1324,4231}_{1432,2431}$: acyclic.}
\label{fig_acyclic_1324_4231}
 \end{subfigure}
\begin{subfigure}[b]{.45\textwidth}
\centering
 \tikz{
\node[fill=gray!20] (a) at (0,0) [circle] {$1,2$};
\node[fill=gray!20] (b) at (2,1) [circle] {$3$};
\node[fill=gray!20] (c) at (2,-1) [circle] {$4$};
\draw (b) edge[->,thick] (a) (a) edge[->,thick] (c)
(c) edge[->,thick] (b);
}
\caption{$G^{1324,4231}_{1432,4132}$: directed cycle.}
\label{fig_cycle_1324_4231}
 \end{subfigure}
 \caption{$G^{v,w}_{x,y}$ is acyclic but $G^{v^{-1},w^{-1}}_{x^{-1},y^{-1}}$ is not.}\label{fig:graph}
 \end{figure}\label{fig:graph-face}
\end{example}

\begin{remark}\label{rmk_BIP_inverses_are_not_combinatorially_equivalent}
	The graph $G_{1432,2431}^{1324,4231}$ is acyclic but the graph $G_{1432,4132}^{1324,4231}$ is not as in~Example~\ref{example:inverse-acyclic}. 	
	Therefore, the fact that $Q_{x,y}$ is a face of $Q_{v,w}$ does not imply that $Q_{x^{-1},y^{-1}}$ is a face of $Q_{v^{-1},w^{-1}}$. See Figure~\ref{fig_BIP_1324-3412} for the Bruhat interval polytope $Q_{1324,3412}$.
	Moreover, the Bruhat interval polytopes $Q_{v,w}$ and $Q_{v^{-1},w^{-1}}$ are not combinatorially equivalent in general. For example, one can check that two Bruhat interval polytopes $Q_{12345,35412}$ and $Q_{12345,45132}$ are not combinatorially equivalent using a computer program, for example, using SAGE.
\end{remark}

A \emph{transitive reduction} of a digraph $G$ is another digraph with the same vertices and as few edges as possible, such that if there is a directed path from vertex $i$ to vertex $j$, then there is also such a path in the reduction. That is, the reduction has the same reachability relations as $G$. Remarkably, the transitive reduction of a finite acyclic digraph is unique and is a subgraph of the given graph. We can find the transitive reduction of a finite acyclic digraph by removing each directed edge $i\to j$ if there is a directed path from $i$ to $j$. See~\cite{a-ga-ul72} for more details.

\begin{example} A transitive reduction of the graph in Figure~\ref{fig:graph_G_at} is itself. Now consider the graphs in Example~\ref{fig:graph-face}. For the graph in Figure~\ref{fig_acyclic_1324_4231}, the node $\{1,4\}$ reaches to the node $\{2\}$ via two different ways: $\{1,4\} \to \{2\}$ and $\{1,4\} \to \{3\} \to \{2\}$. Since this graph is a finite acyclic digraph, it has a unique transitive reduction (see Figure~\ref{fig:transitive_reduction_1}).
On the other hand, the graph $G_{1432,4132}^{1324,4231}$ in Figure~\ref{fig_cycle_1324_4231} and the graph given in Figure~\ref{fig:transitive_reduction_2} have the same reachability relations, and those graphs are a transitive reduction of each other.\footnote{For this reason, uniqueness of a transitive reduction fails for digraphs with cycles.}
	 \begin{figure}[h!]
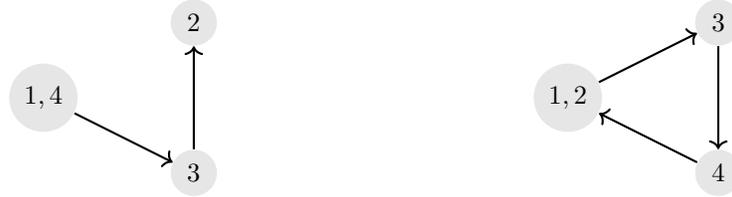

		\centering
		\begin{subfigure}[b]{.45\textwidth}
			\centering
			\tikz{
				\node[fill=gray!20] (a) at (0,0) [circle] {$1,4$};
				\node[fill=gray!20] (b) at (2,1) [circle] {$2$};
				\node[fill=gray!20] (c) at (2,-1) [circle] {$3$};
				\draw (a) edge[->,thick] (c)
				(c) edge[->,thick] (b);
			}
			\subcaption{The transitive reduction of Figure~\ref{fig_acyclic_1324_4231}.}
			\label{fig:transitive_reduction_1}
		\end{subfigure}
		\begin{subfigure}[b]{.45\textwidth}
			\centering
			\tikz{
				\node[fill=gray!20] (a) at (0,0) [circle] {$1,2$};
\node[fill=gray!20] (b) at (2,1) [circle] {$3$};
\node[fill=gray!20] (c) at (2,-1) [circle] {$4$};
\draw (b) edge[<-,thick] (a) (a) edge[<-,thick] (c)
(c) edge[<-,thick] (b);
			}
			\caption{A transitive reduction of Figure~\ref{fig_cycle_1324_4231}.}
			\label{fig:transitive_reduction_2}
		\end{subfigure}
		\caption{Transitive reductions.}\label{fig:transitive_reduction}
	\end{figure}
\end{example}

Since every $u \in [v,w]$ can be realized as a vertex of the polytope $Q_{v,w}$, the graph $G^{v,w}_{u}$ is an acyclic digraph by Theorem~\ref{def:face-graph}. Hence $G^{v,w}_{u}$ has a unique transitive reduction and we denote it by $\widetilde{G}^{v,w}_u$.\footnote{Note that the graph $\widetilde{G}^{e,w^{-1}}_{u^{-1}}$ is equal to the graph $\Gamma_w(u)$ in~\cite{le-ma18} for $u \leq w$.} Note that a directed edge from $i$ to $j$ in $G^{v,w}_u$ is disappeared in $\widetilde{G}^{v,w}_u$ if and only if the vector $\ve_i-\ve_j$ can be expressed as a sum of vectors in the set $\{\ve_k-\ve_\ell\mid (k,\ell)\in E(\widetilde{G}^{v,w}_u)\}.$ Hence $Q_{v,w}$ is contained in 
\begin{equation}\label{eq:cone}
(u(1),\ldots,u(n))+\mathrm{Cone}(\{\ve_i-\ve_j\mid (i,j)\in E(\widetilde{G}^{v,w}_u)\}).
\end{equation}

\begin{corollary}[{see~\cite[Corollary 4.20]{ts-wi15}}]\label{cor:edge_vectors}
Let $v,w\in\mathfrak{S}_n$ with $v\leq w$. For $u\in [v,w]$, the number of edges meeting at the vertex~$u$ in $Q_{v,w}$ equals the number of edges in $\widetilde{G}^{v,w}_u$. Furthermore, the primitive direction vectors of the edge emanating from a vertex $u$ of $Q_{v,w}$ are given by $\ve_i-\ve_j$ for a directed edge from $i$ to $j$ in the graph~$\widetilde{G}^{v,w}_u$.
\end{corollary}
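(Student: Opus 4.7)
My approach combines Theorem~\ref{def:face-graph} with the cone containment~\eqref{eq:cone}. First, every edge of $Q_{v,w}$ incident to $u$ is a one-dimensional face, so by Theorem~\ref{def:face-graph} it is itself a Bruhat interval polytope $Q_{x,y}$ with $x\lessdot y$ in $[v,w]$ and $u\in\{x,y\}$. Writing $y=xt$ with $t=(i,j)$ and $i<j$, a direct computation $y-x=(u(j)-u(i))(\ve_i-\ve_j)$, together with the fact that $u(i)<u(j)$ whenever $x\lessdot y$, shows that the primitive direction vector from $u$ along this edge is $\ve_i-\ve_j$ when $u=x$ (so $(i,j)\in\overline{T}(u,[v,w])$) and $\ve_j-\ve_i$ when $u=y$ (so $(i,j)\in\underline{T}(u,[v,w])$). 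In either case the direction has the form $\ve_a-\ve_b$ for a directed edge $a\to b$ of $G^{v,w}_u$, and whether a given candidate is realized as an actual edge amounts to the acyclicity of $G^{v,w}_{x,y}$.

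The central step is to verify, for each $(a\to b)\in E(\widetilde{G}^{v,w}_u)$ with (without loss of generality) $(a,b)\in\overline{T}(u,[v,w])$, that the graph $G^{v,w}_{u,u'}$ is acyclic, where $u'=u(a,b)$. I would argue by contradiction. Recall that the vertex set of this graph is $[n]$ with $a$ and $b$ identified, and its arrows come from $\overline{T}(u',[v,w])$ and $\underline{T}(u,[v,w])$. Using the lifting property of the Bruhat order, each up-cover at $u'$ can be translated back into a cover relation based at $u$; this turns any hypothetical directed cycle in $G^{v,w}_{u,u'}$ into either a directed cycle in $G^{v,w}_u$ -- impossible since $u$ is a vertex of $Q_{v,w}$, so $G^{v,w}_u$ is acyclic -- or into a directed path from $a$ to $b$ in $G^{v,w}_u$ distinct from the single edge $a\to b$, contradicting the minimality defining the transitive reduction. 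The main obstacle is carrying out this translation rigorously, since the up-cover sets $\overline{T}(u,[v,w])$ and $\overline{T}(u',[v,w])$ are genuinely different.

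Once the central step is in hand, I obtain at least $|E(\widetilde{G}^{v,w}_u)|$ distinct edges of $Q_{v,w}$ at $u$, with directions $\{\ve_i-\ve_j : (i,j)\in E(\widetilde{G}^{v,w}_u)\}$. By~\eqref{eq:cone}, the tangent cone of $Q_{v,w}$ at $u$ is contained in $C:=\mathrm{Cone}\{\ve_i-\ve_j : (i,j)\in E(\widetilde{G}^{v,w}_u)\}$, and since this tangent cone already contains every generator of $C$, the two cones coincide. A flow-decomposition argument then pins down the extreme rays of $C$: if $(i,j)\in E(G^{v,w}_u)\setminus E(\widetilde{G}^{v,w}_u)$, then by the defining property of transitive reduction there is a directed path from $i$ to $j$ in $\widetilde{G}^{v,w}_u$ of length at least two, giving a nontrivial nonnegative decomposition $\ve_i-\ve_j=\sum c_{k\ell}(\ve_k-\ve_\ell)$ that prevents $\ve_i-\ve_j$ from being extremal; for $(i,j)\in E(\widetilde{G}^{v,w}_u)$ no such decomposition exists, since any expression $\ve_i-\ve_j=\sum c_{k\ell}(\ve_k-\ve_\ell)$ with $c_{k\ell}\geq 0$ supports a unit flow from $i$ to $j$ and thus a directed path in $\widetilde{G}^{v,w}_u$, which would have to be the edge $(i,j)$ itself. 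Hence the edges of $Q_{v,w}$ at $u$ are in bijection with $E(\widetilde{G}^{v,w}_u)$ and have exactly the stated direction vectors.
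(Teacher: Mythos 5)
The paper does not actually prove this corollary---it is quoted from \cite[Corollary 4.20]{ts-wi15}, with the containment \eqref{eq:cone} stated just beforehand as the relevant input---so your proposal has to stand on its own. Its first and last paragraphs are fine: an edge at $u$ is a $1$-dimensional face, hence of the form $Q_{x,xt}$ with $x\lessdot xt$ and $u\in\{x,xt\}$, so every edge direction at $u$ is $\ve_i-\ve_j$ for some directed edge $i\to j$ of $G^{v,w}_u$; and your flow-decomposition analysis correctly identifies the extreme rays of $C=\mathrm{Cone}\{\ve_i-\ve_j\mid (i,j)\in E(\widetilde{G}^{v,w}_u)\}$ as exactly the rays through the $\widetilde{G}^{v,w}_u$-edges (acyclicity kills the cycles in the decomposition, and a second path from $a$ to $b$ would contradict transitive reduction). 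The problem is the ``central step.'' You need every $(a,b)\in E(\widetilde{G}^{v,w}_u)$ to be realized by an actual edge of $Q_{v,w}$, you propose to get this from acyclicity of $G^{v,w}_{u,u(a,b)}$, and your argument for that acyclicity is only a hope: ``each up-cover at $u'$ can be translated back into a cover relation based at $u$.'' No such translation is exhibited; a cover $u'\lessdot u't'$ need not correspond to any cover relation at $u$, and as you yourself admit this is the main obstacle. As written this is a genuine gap, not a routine verification.

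Fortunately the central step is avoidable. For every $(i,j)\in E(G^{v,w}_u)$ the permutation $ut_{i,j}$ lies in $[v,w]$, hence its point lies in $Q_{v,w}$, so the vector $ut_{i,j}-u$ (a positive multiple of $\ve_i-\ve_j$, oriented according to the directed edge) lies in the tangent cone $T_u:=\mathrm{Cone}(Q_{v,w}-u)$. Thus $C\subseteq T_u$, while \eqref{eq:cone} gives $T_u\subseteq C$, so $T_u=C$. Since the edges of a polytope at a vertex correspond exactly to the extreme rays of its tangent cone there, your extreme-ray computation for $C$ finishes the proof, with no need to test acyclicity of any $G^{v,w}_{u,u'}$. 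I recommend replacing your second paragraph by this observation.
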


We can check the smoothness of a vertex of a Bruhat interval polytope by using the graph $\widetilde{G}^{v,w}_u$.

\begin{proposition}\label{prop:smooth}
Let $v$ and $w$ be in $\mathfrak{S}_n$ with $v\leq w$ and let $u\in [v,w]$. The following are equivalent:
\begin{enumerate}
\item The vertex $u$ is simple in $Q_{v,w}$.
\item The vertex $u$ is smooth in $Q_{v,w}$.
\item The undirected underlying graph of $\widetilde{G}^{v,w}_u$ is a forest.
\end{enumerate}
\end{proposition}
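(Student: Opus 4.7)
The plan is to prove the cycle $(2) \Rightarrow (1) \Leftrightarrow (3) \Rightarrow (2)$. The implication $(2) \Rightarrow (1)$ is immediate from the definition of smoothness.

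For $(1) \Leftrightarrow (3)$, I would combine Corollary~\ref{cor:edge_vectors} with the cone containment in~\eqref{eq:cone}. By the corollary, the degree $d(u)$ of $u$ in $Q_{v,w}$ equals the number of directed edges of $\widetilde{G}^{v,w}_u$, and the primitive edge directions at $u$ are the root vectors $\ve_i - \ve_j$ for $(i,j)\in E(\widetilde{G}^{v,w}_u)$. These vectors span the direction space of the affine hull of $Q_{v,w}$. Since the span of $\{\ve_i-\ve_j\}$ over the edges of any graph on $[n]$ has dimension exactly $n-c$, where $c$ is the number of connected components of the underlying undirected graph, we obtain $\dim Q_{v,w} = n - c(u)$, where $c(u)$ is the number of components of the underlying graph of $\widetilde{G}^{v,w}_u$ (a quantity that is therefore independent of $u$). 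Consequently, $u$ is simple exactly when $|E(\widetilde{G}^{v,w}_u)| = n - c(u)$, which is precisely the textbook characterization of a forest on $n$ vertices with $c(u)$ components.

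For $(3) \Rightarrow (2)$, I would show that when the underlying graph of $\widetilde{G}^{v,w}_u$ is a forest $F$, the primitive edge vectors $\{\ve_i - \ve_j : (i,j)\in E(F)\}$ form a $\Z$-basis of $V \cap \Z^n$, where $V$ is the direction space of the affine hull of $Q_{v,w}$. Working one connected component at a time, this reduces to the standard type-$A$ fact that the edge vectors of a tree $T$ on $C \subset [n]$ form a $\Z$-basis of the rank-$(|C|-1)$ lattice $\{\sum_{i \in C} a_i \ve_i : a_i \in \Z,\ \sum_{i\in C} a_i = 0\}$: any $\ve_i - \ve_j$ with $i,j \in C$ is recovered as the signed telescoping sum of edge vectors along the unique $i$-to-$j$ path in $T$, giving the required unimodularity.

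I do not anticipate a substantive obstacle; the one point that needs care is the interpretation of the paper's smoothness definition for the non-full-dimensional polytope $Q_{v,w}$, where ``basis for $\Z^n$'' must be read as ``basis for $V \cap \Z^n$'' within the affine span. With this convention fixed, all three conditions collapse to the forest property of $\widetilde{G}^{v,w}_u$ by Corollary~\ref{cor:edge_vectors} and elementary graph theory.
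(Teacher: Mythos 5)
Your proposal is correct. The paper itself gives no argument here beyond citing \cite[Lemma 6.6]{le-ma18}, and your reconstruction --- Corollary~\ref{cor:edge_vectors} plus the count $\dim Q_{v,w}=n-c(u)$ to get $(1)\Leftrightarrow(3)$, and the unimodularity of the edge vectors of a tree (via telescoping along paths) for $(3)\Rightarrow(2)$ --- is exactly the standard argument behind that citation, including the necessary reading of ``basis for $\Z^n$'' as a basis for the lattice $V\cap\Z^n$ in the affine span.
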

\begin{proof}
The proof of \cite[Lemma 6.6]{le-ma18} works here too.
\end{proof}

The following proposed conjecture is a generalized version of Conjecture 7.16 in~\cite{le-ma18}.
\begin{conjecture}\label{conj}
   The polytope $Q_{v,w}$ is a simple polytope if and only if the vertices $v$ and $w$ are simple. Equivalently, if two graphs $\widetilde{G}^{v,w}_v$ and $\widetilde{G}^{v,w}_v$ are forests, then $\widetilde{G}^{v,w}_u$ is a forest for every $v\leq u\leq w$.
\end{conjecture}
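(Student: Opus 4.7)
The forward direction is immediate: a simple polytope has every vertex simple, so in particular $v$ and $w$ are simple.

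For the converse, assume $v$ and $w$ are simple vertices of $Q_{v,w}$, and write $d := \dim Q_{v,w}$. The plan is to prove that $Q_{v,w}$ is combinatorially a cube, from which simplicity follows at once. By Theorem~\ref{thm:main}, this reduces to showing (a) $Q_{v,w}$ is toric, i.e.\ $\ell(w)-\ell(v)=d$, and (b) $[v,w]$ is a Boolean algebra.

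For step (a), unpacking the hypothesis via Proposition~\ref{prop:smooth} and Corollary~\ref{cor:edge_vectors}, the forest $\widetilde{G}_v^{v,w}$ has exactly $d$ edges, corresponding bijectively to the $d$ atoms of $[v,w]$; dually, $\widetilde{G}_w^{v,w}$ witnesses exactly $d$ coatoms. One always has $d \leq \ell(w)-\ell(v)$ by~\eqref{eq_dim_Qvw_and_length}, so the task is the reverse inequality. I would attempt a chain-counting/labelling argument: each cover in a maximal chain of $[v,w]$ is labelled by a transposition, and the forest structure at $v$ (and symmetrically at $w$) should force these labels to come from the $d$ edges of $\widetilde{G}_v^{v,w}$ (resp.\ $\widetilde{G}_w^{v,w}$) in a way that bounds the length of any maximal chain by $d$.

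For step (b), with toricness in hand, Theorem~\ref{prop:3-2} identifies every subinterval of $[v,w]$ with a face of $Q_{v,w}$. In particular, for each atom $v' \gtrdot v$ the facet $Q_{v',w}$ is a Bruhat interval polytope of dimension $d-1$. I would then induct on $d$: verify that $v'$ and $w$ remain simple in the smaller polytope $Q_{v',w}$ (by analyzing how $\widetilde{G}_{v'}^{v',w}$ and $\widetilde{G}_w^{v',w}$ arise from $\widetilde{G}_v^{v,w}$ and $\widetilde{G}_w^{v,w}$ upon removing the edge associated to the transposition $t$ with $v'=vt$), apply the inductive hypothesis to conclude $[v',w]$ is Boolean for each atom $v'$, and glue these Boolean subintervals together to deduce that $[v,w]$ itself is Boolean.

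The main obstacle is step (a): deducing the global length identity $\ell(w)-\ell(v)=d$ from purely local simplicity data at only the two endpoints. A priori, intermediate elements of $[v,w]$ might admit many additional covers not visible from $v$ or $w$, and the forest conditions at the endpoints alone do not manifestly rule this out. I expect that any proof will require a subtle combinatorial argument, likely leveraging the interplay of the two transposition labellings coming from $\widetilde{G}_v^{v,w}$ and $\widetilde{G}_w^{v,w}$, possibly combined with properties specific to the Bruhat order such as the lifting property or the subword property.
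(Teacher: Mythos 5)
First, note that the statement you are trying to prove is stated in the paper as Conjecture~\ref{conj}, not as a theorem: the authors offer no proof, only computer-based evidence for $n\leq 6$. So there is no argument in the paper to compare yours against, and any complete proof would be new. Your proposal, however, is not such a proof: it is a plan whose central step you yourself flag as unresolved, and, more seriously, the plan rests on a false intermediate claim.

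The fatal problem is your opening reduction: you propose to show that simplicity of the two vertices $v$ and $w$ forces $Q_{v,w}$ to be a combinatorial cube, via Theorem~\ref{thm:main}, by establishing that $Q_{v,w}$ is toric and $[v,w]$ is Boolean. This is false already for $Q_{123,321}=\mathrm{Perm}_2$, the hexagon. There every vertex is simple (degree $2$ in a $2$-dimensional polytope), so in particular $v=e$ and $w=w_0$ are simple, yet $Q_{e,w_0}$ is not a cube, is not toric ($\ell(w_0)-\ell(e)=3>2=\dim Q_{e,w_0}$), and $[e,w_0]=\mathfrak{S}_3$ is not Boolean. The same happens for $\mathrm{Perm}_{n-1}$ for all $n\geq 3$. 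Hence your step (a) — deducing $\ell(w)-\ell(v)=\dim Q_{v,w}$ from simplicity of the endpoints — cannot succeed, and the whole route through Theorem~\ref{thm:main} collapses; the conjecture genuinely concerns non-toric polytopes, where simplicity does not imply being a cube. (Proposition~\ref{prop:3-7} gives the cube conclusion only under the additional hypothesis that $Q_{v,w}$ is already toric.) A secondary inaccuracy: the $d$ edges of the forest $\widetilde{G}^{v,w}_v$ need not be in bijection with the atoms of $[v,w]$, since the transitive reduction can delete edges coming from atoms; in the non-toric setting the number of atoms can exceed $d$. Any viable attack on the conjecture must work directly with the forests $\widetilde{G}^{v,w}_u$ at intermediate $u$ and cannot pass through cubes.
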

In fact, we have computer-based evidence \cite{tsuchiya} that the conjecture above is true for $n\leq 6$.

Note that the dimension of the cone in~\eqref{eq:cone} is independent of the choice of $u$ since it is equal to the dimension of the polytope $Q_{v,w}$. This says that $\#B(\widetilde{G}_{u}^{v,w})$ is independent of the choice of~$u$.

\begin{corollary}
Let $v,w\in\mathfrak{S}_n$ and $v\leq w$ and $u\in [v,w]$. The number of the connected components of the graph $\widetilde{G}^{v,w}_u$ is independent of~$u$, and $\dim Q_{v,w}=n-\#B(G^{v,w}_u)$.
\end{corollary}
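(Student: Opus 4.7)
The plan is to combine the inclusion \eqref{eq:cone} with Corollary~\ref{cor:edge_vectors} and an elementary fact from graph theory about edge-difference vectors.

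First I would note that, since $u$ is a vertex of $Q_{v,w}$, Corollary~\ref{cor:edge_vectors} identifies the primitive direction vectors of the edges of $Q_{v,w}$ incident to $u$ as exactly $\ve_i-\ve_j$ for $(i,j)\in E(\widetilde{G}^{v,w}_u)$. Because the edges emanating from a vertex of a convex polytope always span the affine hull of the polytope, the linear span of $\{\ve_i-\ve_j\mid (i,j)\in E(\widetilde{G}^{v,w}_u)\}$ is precisely the translation to the origin of the affine hull of $Q_{v,w}$, and hence has dimension $\dim Q_{v,w}$.

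Next I would invoke the elementary fact that, for any (di)graph on vertex set $[n]$ whose underlying undirected graph has $c$ connected components, the linear span of the vectors $\ve_i-\ve_j$ over all edges $(i,j)$ has dimension $n-c$. One inclusion follows by noting that each $\ve_i-\ve_j$ is annihilated by any linear functional constant on the blocks of the component partition; the reverse follows by taking a spanning tree in each component and expressing the relevant coordinate differences along its edges. Applied to $\widetilde{G}^{v,w}_u$, this yields $\dim Q_{v,w}=n-\#B(\widetilde{G}^{v,w}_u)$, and since the left-hand side does not depend on $u$, neither does $\#B(\widetilde{G}^{v,w}_u)$.

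Finally I would observe that a transitive reduction preserves reachability, so the underlying undirected graphs of $\widetilde{G}^{v,w}_u$ and $G^{v,w}_u$ have the same partition into connected components; that is, $\#B(\widetilde{G}^{v,w}_u)=\#B(G^{v,w}_u)$. Combining this with the displayed formula gives both assertions at once. I do not anticipate any real obstacle: the only mildly delicate point is that the edge directions at $u$ genuinely span the affine hull rather than a proper subspace, but this is a standard fact about convex polytopes and is already implicit in Corollary~\ref{cor:edge_vectors}.
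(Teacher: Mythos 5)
Your proof is correct and follows essentially the same route as the paper: the authors also identify the edge directions at $u$ with the vectors $\ve_i-\ve_j$ for $(i,j)\in E(\widetilde{G}^{v,w}_u)$, equate the dimension of the cone they span with $\dim Q_{v,w}$ via the containment \eqref{eq:cone}, and then use the rank formula for the incidence matrix of a digraph (your ``elementary fact'') together with $B(\widetilde{G}^{v,w}_u)=B(G^{v,w}_u)$. No gaps.
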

\begin{proof}
Recall that the incidence matrix of a connected digraph on $[n]$ has rank~$(n-1)$. Let $M_u$ be the incidence matrix of the graph $\widetilde{G}^{v,w}_u$. Then the rank of $M_u$ is equal to the dimension of the cone in~\eqref{eq:cone}, and hence it is equal to the dimension of $Q_{v,w}$. Therefore, $$\dim Q_{v,w}=\rank(M_u)=n-\#B(\widetilde{G}^{v,w}_u).$$
Since $B(\widetilde{G}^{v,w}_u)=B({G}^{v,w}_u)$, this proves the corollary.
\end{proof}

We can further prove that the partition~$B(\widetilde{G}^{v,w}_u)$ is independent of~$u$ (see Proposition~\ref{lem:connected_component}). To give a proof of this, we introduce an operation on partitions.

For partitions $P$ and $Q$ of~$[n]$, we define a partition $P*Q$ of~$[n]$ as follows: two elements $i,j\in [n]$ are in a same block of $P*Q$ if and only if there is a sequence $i=i_1, i_2, \dots, i_k=j$ such that each consecutive pair $(i_\ell,i_{\ell+1})$ $(\ell=1,2,\dots,k-1)$ is in a same block of either $P$ or $Q$. 

\begin{example}
	Let
	\[
	\begin{split}
	P&=\{ \{1\}, \{2,3,4\}, \{5\}, \{6,7\}, \{8\}, \{9,10\}\},\\
	Q&=\{ \{1,3\}, \{2,4\}, \{5,7\}, \{6,8\}, \{9,10\}\}
	\end{split}
	\]
	be partitions on~$[10]$. Then
	\[
	P*Q=\{ \{1,2,3,4\}, \{5,6,7,8\}, \{9,10\}\}.
	\]
\end{example}

Note that if $G$ and $H$ are (undirected) graphs on $[n]$, then $B(G)\ast B(H)$ coincides with the partition determined by the graph sum of $G$ and $H$, the graph with adjacency matrix given by the sum of adjacency matrices of $G$ and $H$.

We introduce another way to compute the dimension of a Bruhat interval polytope by using a graph $G^{\mathcal{C}}$. Let $v,w\in\mathfrak{S}_n$ with $v\leq w$, and let $\mathcal{C}\colon v=x_{(0)}\lessdot x_{(1)}\lessdot \cdots\lessdot x_{(\ell)}=w$ be a maximal chain from $v$ to $w$. Then recall from~\cite[\S 4.2]{ts-wi15} that the undirected graph $G^{\mathcal{C}}$ on~$[n]$ is defined to be the graph whose edge set is the set of unordered pairs
\[
\{\{a,b\} \mid (a,b)=x_{(i)}^{-1}x_{(i+1)}  \in T \quad\text{ for some }0\leq i\leq \ell-1\}.
\]
Here, $T$ is the set of transpositions (see~\eqref{eq_def_of_T}).
Note that $G^\mathcal{C}$ can have multiple edges. It was shown in \cite[Corollary 4.8]{ts-wi15} that $B(G^{\mathcal{C}})$ is independent of the choice of~$\mathcal{C}$, and denoted  by $B_{v,w}$.\label{partition} 
 Furthermore, the dimension of the polytope~$Q_{v,w}$ is determined by the partition $B_{v,w}$, 
	\begin{equation}\label{eq_dim_Qvw_Bvw}
	\dim Q_{v,w} = n-\#B_{v,w},
	\end{equation}
 see~\cite[Theorem 4.6]{ts-wi15}.
 It was also shown in~\cite[Corollary 4.10]{ts-wi15} that 
\begin{equation}\label{eq_partitions_Bvw}
B(G^{v,w}_v)=B(G^{v,w}_w)=B_{v,w}.
\end{equation}

\begin{example}
	Suppose that $v = 1324 = s_2$ and $w = 4231 = s_3s_2s_1s_2s_3$. Choose a maximal chain $\mathcal{C}$ from $v$ to $w$:
	\begin{equation}\label{eq:chain}
	\begin{tikzcd}[column sep = 0cm, row sep = 0.2cm]
	\mathcal{C}: &s_2 &\lessdot &s_3s_2 &\lessdot &s_3s_2s_1 &\lessdot &s_3s_2s_1s_2 &\lessdot& s_3s_2s_1s_2s_3. \\
	& x_{(0)} \arrow[u, equal] & & x_{(1)} \arrow[u, equal] & & x_{(2)}  \arrow[u, equal] & & x_{(3)} \arrow[u, equal] & & x_{(4)} \arrow[u, equal]
	\end{tikzcd}
	\end{equation}
	Then we have that
	\[
	x_{(0)}^{-1}x_{(1)} = s_2s_3s_2 = (2,4), \quad
	x_{(1)}^{-1}x_{(2)} = s_1  = (1,2), \quad
	x_{(2)}^{-1}x_{(3)} = s_2 = (2,3), \quad
	x_{(3)}^{-1}x_{(4)} = s_3 = (3,4).
	\]
	Hence the corresponding graph is given as in Figure~\ref{fig_graph_Bvw} and the partition $B_{v,w}$ is $\{[4]\}$.  Therefore, the Bruhat interval polytope $Q_{1324,4231}$ is of dimension~$3$.
	\begin{figure}[h]
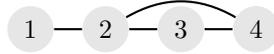

		\centering
		\tikz{
			\node[fill=gray!20] (1) at (0,0) [circle] {$1$};
			\node[fill=gray!20] (2) at (1,0) [circle] {$2$};
			\node[fill=gray!20] (3) at (2,0) [circle] {$3$};
			\node[fill=gray!20] (4) at (3,0) [circle] {$4$};
			\draw  (1) edge[thick] (2)		(2) edge[thick] (3) (3) edge[thick] (4) (2) edge [thick, bend left] (4);
		}
		\caption{Graph $G^{\mathcal{C}}$ for the chain $\mathcal{C}$ in~\eqref{eq:chain}.}
		\label{fig_graph_Bvw}
	\end{figure}
\end{example}

\begin{proposition}\label{lem:connected_component}
Let $v,w\in\mathfrak{S}_n$ and $v\leq w$ and $u\in [v,w]$. The connected components of the graph $\widetilde{G}^{v,w}_u$ are independent of~$u$. Furthermore, $B(\widetilde{G}^{v,w}_u)=B_{v,w}$.
\end{proposition}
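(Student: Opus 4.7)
The plan is to identify $B(\widetilde G_u^{v,w})$ with a combinatorial invariant of the polytope $Q_{v,w}$ that manifestly does not depend on $u$, and then to equate it with $B_{v,w}$ by specializing to $u=v$.

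First, I would observe that passing to the transitive reduction does not change the underlying weakly connected components: every directed edge $i\to j$ removed by the transitive reduction is replaced by a directed path in $\widetilde G_u^{v,w}$, so any walk in the underlying undirected graph of $G_u^{v,w}$ can be rerouted in $\widetilde G_u^{v,w}$. Hence $B(\widetilde G_u^{v,w})=B(G_u^{v,w})$, and it suffices to analyze either of them.

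The key geometric input is Corollary~\ref{cor:edge_vectors}: the primitive direction vectors of the edges of $Q_{v,w}$ meeting the vertex $u$ are exactly the vectors $\ve_i-\ve_j$ for directed edges $i\to j$ of $\widetilde G_u^{v,w}$. For any polytope, the edge vectors emanating from a single vertex span the translation of its affine hull, which I denote by $W\subset\R^n$. Crucially, $W$ is intrinsic to $Q_{v,w}$ and does not depend on the choice of vertex $u$.

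Next, I would show that the partition $B(\widetilde G_u^{v,w})$ is completely determined by $W$: two indices $i,j\in[n]$ lie in the same block if and only if $\ve_i-\ve_j\in W$. The forward direction is a telescoping sum along a path in the underlying graph. For the converse, given $i,j$ in distinct blocks, the linear functional on $\R^n$ that takes the value $1$ on $\ve_k$ for every $k$ in the block of $i$ and $0$ otherwise vanishes on every edge vector of $\widetilde G_u^{v,w}$ (both endpoints of any edge share a block) but is nonzero on $\ve_i-\ve_j$. It follows that $B(\widetilde G_u^{v,w})$ depends only on $W$, hence is independent of $u$. Taking $u=v$ and invoking $B(G_v^{v,w})=B_{v,w}$ from~\eqref{eq_partitions_Bvw} then yields $B(\widetilde G_u^{v,w})=B_{v,w}$ for every $u\in[v,w]$. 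I do not anticipate a genuine obstacle here; the crux is simply the recognition that a partition arising from a graph whose edge vectors have the form $\ve_i-\ve_j$ is recoverable from the linear span of those edge vectors alone, after which independence of $u$ is essentially automatic from Corollary~\ref{cor:edge_vectors}.
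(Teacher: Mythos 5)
Your proof is correct, but it takes a genuinely different route from the paper's. The paper argues combinatorially: it splits the edge set of $\widetilde{G}^{v,w}_u$ into the down--edges and up--edges at $u$, so that $B(\widetilde{G}^{v,w}_u)=B(\widetilde{G}^{v,u}_u)\ast B(\widetilde{G}^{u,w}_u)=B_{v,u}\ast B_{u,w}$ by~\eqref{eq_partitions_Bvw} applied to the subintervals $[v,u]$ and $[u,w]$, and then shows $B_{v,u}\ast B_{u,w}=B_{v,w}$ by splitting a maximal chain of $[v,w]$ through $u$ and using $B(G^{\mathcal C})=B(G^{\mathcal C_-})\ast B(G^{\mathcal C_+})$. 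You instead go through the geometry: by Corollary~\ref{cor:edge_vectors} the edge vectors at the vertex $u$ are exactly the $\ve_i-\ve_j$ for $(i,j)\in E(\widetilde G^{v,w}_u)$, their span is the direction space $W$ of $\mathrm{aff}(Q_{v,w})$ (a standard fact, valid at every vertex), and the partition is recovered from $W$ by the criterion $i\sim j\iff \ve_i-\ve_j\in W$ --- the telescoping argument for one direction and the block indicator functional for the other are both sound. Your approach buys a clean intrinsic description of the blocks in terms of the affine hull and needs~\eqref{eq_partitions_Bvw} only once, at $u=v$; the paper's approach stays entirely combinatorial and yields the extra identity $B_{v,u}\ast B_{u,w}=B_{v,w}$ along the way. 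Your preliminary observation that $B(\widetilde G^{v,w}_u)=B(G^{v,w}_u)$ is also fine, since $G^{v,w}_u$ is acyclic (Theorem~\ref{def:face-graph}) and its transitive reduction preserves reachability, hence weak connectivity.
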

\begin{proof}
Note that $B(G^{v,w}_u)=B(\widetilde{G}^{v,w}_u)$. Then we can see that
\begin{equation*}
\begin{array}{rll}
B(\widetilde{G}^{v,w}_u)&=B(\widetilde{G}^{v,u}_u)\ast B(\widetilde{G}^{u,w}_u)&\\
&=B_{v,u}\ast B_{u,w}&\text{ (by~\eqref{eq_partitions_Bvw})}.\\
\end{array}
\end{equation*} Let us choose a maximal chain $\mathcal{C}$ of $[v,w]$ containing $u$. That is,
$$\mathcal{C}\colon v=x_{(0)}\lessdot x_{(1)}\lessdot \cdots \lessdot x_{(k)}\lessdot \cdots \lessdot x_{(\ell)}=w \text{ and }x_{(k)}=u.$$ If $\mathcal{C}_{-}: x_{(0)}\lessdot x_{(1)}\lessdot \cdots \lessdot x_{(k)}$ and $\mathcal{C}_{+}: \lessdot x_{(k)}\lessdot \cdots \lessdot x_{(\ell)}$ are the subchains of $\mathcal{C}$, then $B_{v,u}=B(G^{\mathcal{C_{-}}})$ and $B_{u,w}=B(G^{\mathcal{C_{+}}})$. Since $B(G^{\mathcal{C}})=B(G^{\mathcal{C_{-}}}) \ast B(G^{\mathcal{C_{+}}})$, we  conclude that $B_{v,u}\ast B_{u,w}=B_{v,w}$.
\end{proof}

\section{Toric Bruhat interval polytopes}\label{sec:Toric Bruhat interval polytopes}
Recall that a Bruhat interval polytope $Q_{v,w}$ is toric if $\dim Q_{v,w}=\ell(w)-\ell(v)$. In this section, we show that the combinatorial type of a toric Bruhat interval polytope $Q_{v,w}$ is  determined by the poset structure of the interval $[v,w]$ (see Theorem~\ref{prop:3-2}). Furthermore, a toric Bruhat interval polytope is simple if and only if it is combinatorially equivalent to a cube (see Corollary~\ref{coro:3-8}).

We already have seen in Theorem~\ref{def:face-graph} that every face of a Bruhat interval polytope $Q_{v,w}$ can be realized by a subinterval of $[v,w]$.
We can show that the converse is also true when $Q_{v,w}$ is toric.

\begin{theorem} \label{prop:3-2}
For a Bruhat interval polytope $Q_{v,w}$,
the following are equivalent:
\begin{enumerate}
\item $Q_{v,w}$ is toric {\rm (}i.e., $\dim Q_{v,w}=\ell(w)-\ell(v)${\rm )}.
\item $Q_{x,y}$ is a face of $Q_{v,w}$ for any $[x,y]\subset [v,w]$.
\end{enumerate}
\end{theorem}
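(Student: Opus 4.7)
The plan is to prove the two implications separately. The implication (2) $\Rightarrow$ (1) is elementary: a maximal chain in $[v,w]$ produces a nested sequence of faces of $Q_{v,w}$ of strictly increasing dimension. The implication (1) $\Rightarrow$ (2) is the substantive direction, and I would argue it using the geometric interpretation from Section~\ref{sec:Relationship with Richardson varieties}.

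For (2) $\Rightarrow$ (1), fix a maximal chain $v = x_0 \lessdot x_1 \lessdot \cdots \lessdot x_\ell = w$ with $\ell = \ell(w) - \ell(v)$. By hypothesis each $Q_{v,x_i}$ is a face of $Q_{v,w}$. The vertex set of $Q_{v,x_i}$ is $\{z : v \le z \le x_i\}$, which does not contain $x_{i+1}$, so
\[
\{v\} = Q_{v,x_0} \subsetneq Q_{v,x_1} \subsetneq \cdots \subsetneq Q_{v,x_\ell} = Q_{v,w}
\]
is a strictly increasing chain of faces. Since a face of $Q_{v,w}$ contained in another face is itself a face of the latter, the dimensions strictly increase along the chain, giving $\dim Q_{v,w} \ge \ell$. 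Combined with the upper bound in~\eqref{eq_dim_Qvw_and_length}, this forces $\dim Q_{v,w} = \ell(w) - \ell(v)$, so $Q_{v,w}$ is toric.

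For (1) $\Rightarrow$ (2), assume $Q_{v,w}$ is toric. By Proposition~\ref{lem:dim} the Richardson variety $X := X^{v^{-1}}_{w^{-1}}$ is itself a projective toric variety whose moment polytope is $Q_{v,w}$. Every $\mathbb{T}$-invariant irreducible closed subvariety of $\Fl_n$ is a Richardson variety, so every $\mathbb{T}$-invariant irreducible closed subvariety of $X$ is a Richardson subvariety $X^{x^{-1}}_{y^{-1}}$ with $[x,y] \subset [v,w]$. On the other hand, the standard toric dictionary puts the $\mathbb{T}$-invariant irreducible closed subvarieties of $X$ in bijection with the faces of its moment polytope $Q_{v,w}$, the correspondence sending a subvariety to its moment-map image. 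By Lemma~\ref{lem:moment_map} the moment-map image of $X^{x^{-1}}_{y^{-1}}$ is exactly $Q_{x,y}$, so $Q_{x,y}$ is a face of $Q_{v,w}$ for every $[x,y] \subset [v,w]$.

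A purely combinatorial alternative would invoke Theorem~\ref{def:face-graph} and try to show directly that $G^{v,w}_{x,y}$ is acyclic for every subinterval. A useful preliminary step is the observation that $Q_{v,w}$ is toric if and only if, for some (equivalently, every) maximal chain $\mathcal{C}$ in $[v,w]$, the multigraph $G^{\mathcal{C}}$ is a forest; this follows from the identity $\#B_{v,w} = n - \dim Q_{v,w}$ in~\eqref{eq_dim_Qvw_Bvw} together with the elementary bound ``$\#\text{components} \ge \#\text{vertices} - \#\text{edges}$, with equality iff the graph is a forest''. As a consequence, every subinterval of a toric Bruhat interval is again toric. The hard part is then to convert a hypothetical directed cycle in $G^{v,w}_{x,y}$---built from up-edges in $\overline{T}(y,[v,w])$, down-edges in $\underline{T}(x,[v,w])$, and paths through the blocks of $B_{x,y}$---into a cycle in some $G^{\mathcal{C}}$, contradicting the forest property. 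The geometric argument above circumvents this bookkeeping altogether.
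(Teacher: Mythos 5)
Your proof is correct, and the substantive direction (1)$\Rightarrow$(2) is the same argument the paper gives: the Richardson subvariety $X^{x^{-1}}_{y^{-1}}$ is an irreducible $\mathbb{T}$-invariant closed subvariety of the toric variety $X^{v^{-1}}_{w^{-1}}$, hence an orbit closure, hence its moment image $Q_{x,y}$ is a face of $Q_{v,w}$. Two remarks. First, your parenthetical assertion that \emph{every} $\mathbb{T}$-invariant irreducible closed subvariety of $\Fl_n$ is a Richardson variety is false --- the closure of a generic $\mathbb{T}$-orbit in $\Fl_n$ (the permutohedral variety, of dimension $n-1$) is $\mathbb{T}$-invariant, irreducible and closed but is not a Richardson variety for $n\ge 3$. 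Fortunately you never use that implication: what the argument needs is only the converse inclusion, namely that each $X^{x^{-1}}_{y^{-1}}$ with $[x,y]\subset[v,w]$ \emph{is} such a subvariety, which is true; so the error is cosmetic. Second, your proof of (2)$\Rightarrow$(1) is a genuine, if mild, streamlining of the paper's: the paper runs an induction on $\ell(w)-\ell(v)$, establishing $\dim Q_{v,y}=\ell(y)-\ell(v)$ for a coatom $y$ and then using that the facet $Q_{v,y}$ omits $w$; you instead take a full maximal chain and read off $\dim Q_{v,w}\ge\ell(w)-\ell(v)$ from the strictly increasing tower of faces $Q_{v,x_0}\subsetneq\cdots\subsetneq Q_{v,x_\ell}$ (valid because all permutation points are in convex position, so $x_{i+1}\notin Q_{v,x_i}$, and a face contained in a face is a face of it). Both rest on the same two facts --- proper faces drop dimension, and the upper bound \eqref{eq_dim_Qvw_and_length} --- but your version avoids the induction entirely. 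The closing observation that $Q_{v,w}$ is toric iff $G^{\mathcal{C}}$ is a forest is correct and worth keeping as a remark, though as you say it is not developed into a proof.
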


\begin{proof}
Suppose that (1) holds. 
Since $Q_{v,w}$ is toric, the Richardson variety $X_{w^{-1}}^{v^{-1}}$ is a toric variety by definition.  We note that $[x^{-1},y^{-1}]\subset [v^{-1},w^{-1}]$ if (and only if) $[x,y]\subset [v,w]$.  Since $X_{y^{-1}}^{x^{-1}}$  is a toric subvariety of $X_{w^{-1}}^{v^{-1}}$ and the moment map gives a one-to-one correspondence between toric subvarieties of $X_{w^{-1}}^{v^{-1}}$ and faces of $Q_{v,w}$, we have that $\mu(X_{y^{-1}}^{x^{-1}})=Q_{x,y}$ is a face of $\mu(X_{w^{-1}}^{v^{-1}})=Q_{v,w}$, proving (2).

Conversely, suppose (2) holds. We shall prove (1) by induction on the value of $\ell(w)-\ell(v)$. When $\ell(w)-\ell(v)=1$, (1) obviously holds. We note that for any $[p,q]\subset [x,y]$, $Q_{p,q}$ is a face of $Q_{x,y}$ by (2). If $[x,y]$ is a proper subset of $[v,w]$, then $\ell(y)-\ell(x)<\ell(w)-\ell(v)$ and hence $\dim Q_{x,y}=\ell(y)-\ell(x)$ by induction assumption.

Now we take $x=v$ and $y\lessdot w$. Then
\begin{equation} \label{eq:3-1}
\dim Q_{v,y}= \ell(w)-\ell(y)=\ell(w)-\ell(v)-1.
\end{equation}
Moreover, since $Q_{v,y}$ is a face of $Q_{v,w}$ by (2) and does not contain the vertex $w$, we have
\begin{equation} \label{eq:3-2}
\dim Q_{v,w}>\dim Q_{v,y}.
\end{equation}
It follows from \eqref{eq:3-1} and \eqref{eq:3-2} that
$\dim Q_{v,w}\ge \ell(w)-\ell(v)$.
Since the converse inequality holds by~\eqref{eq_dim_Qvw_and_length}, this proves (1).
\end{proof}

{Theorem}~\ref{prop:3-2} shows that if $Q_{v,w}$ is toric, then the poset structure of $[v,w]$ determines the face poset of $Q_{v,w}$. Hence the following corollary directly follows.

\begin{corollary} \label{coro:3-3}
If $Q_{v,w}$ is toric, then its combinatorial type is determined by the poset structure of~$[v,w]$. \end{corollary}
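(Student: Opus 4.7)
The plan is to derive the corollary directly from Theorem~\ref{prop:3-2} together with the structural fact, due to Tsukerman--Williams, that every face of a Bruhat interval polytope is itself a Bruhat interval polytope. The strategy is to exhibit an explicit poset isomorphism between the face lattice of $Q_{v,w}$ and the poset of subintervals of $[v,w]$ ordered by inclusion; once this is done, the combinatorial type of $Q_{v,w}$ is forced by $[v,w]$ alone.

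First I would assemble the bijection. Under the toric hypothesis, Theorem~\ref{prop:3-2} gives that for every subinterval $[x,y]\subset [v,w]$ the polytope $Q_{x,y}$ is a face of $Q_{v,w}$. Conversely, by \cite[Theorem 4.1]{ts-wi15} (quoted in Section~\ref{sec:Properties of Bruhat interval polytopes}) every face of $Q_{v,w}$ has the form $Q_{x,y}$ for some $[x,y]\subset [v,w]$. The assignment $[x,y]\mapsto Q_{x,y}$ is injective because the vertex set of $Q_{x,y}$ is precisely $\{(z(1),\ldots,z(n))\mid x\le z\le y\}$, and the permutation vectors of distinct permutations are distinct, so one recovers $[x,y]$ from the vertex set of $Q_{x,y}$.

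Next I would verify that this bijection is an order isomorphism. If $[x',y']\subseteq [x,y]$ then the vertex set of $Q_{x',y'}$ is contained in that of $Q_{x,y}$, hence $Q_{x',y'}\subseteq Q_{x,y}$. Conversely, suppose $Q_{x',y'}\subseteq Q_{x,y}$. Both are faces of $Q_{v,w}$, and the vertices of any face of a lattice polytope are vertices of the ambient polytope; hence each vertex of $Q_{x',y'}$ is a vertex of $Q_{x,y}$, so the underlying permutation sets satisfy $[x',y']\subseteq[x,y]$.

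Since the face poset of a polytope is, by definition, its combinatorial type, and we have just shown this face poset is isomorphic to the poset of subintervals of $[v,w]$, the combinatorial type of $Q_{v,w}$ depends only on $[v,w]$ as an abstract poset. I do not anticipate a real obstacle here: all the genuine combinatorics has been absorbed into Theorem~\ref{prop:3-2} and \cite[Theorem 4.1]{ts-wi15}, and the corollary is essentially a repackaging. The only point requiring a moment of care is the bijectivity/monotonicity check above, which relies on the observation that vertices of faces are vertices of the ambient polytope, so that containment of faces is detected on the level of vertex sets.
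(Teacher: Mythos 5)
Your argument is correct and follows the paper's own route: the paper derives Corollary~\ref{coro:3-3} directly from Theorem~\ref{prop:3-2} combined with the Tsukerman--Williams fact that every face of $Q_{v,w}$ is a Bruhat interval polytope, exactly as you do. You have merely made explicit the order isomorphism between the face lattice and the poset of subintervals of $[v,w]$, which the paper leaves implicit, and your verification of injectivity and monotonicity via vertex sets is sound.
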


The assumption \lq\lq toric\rq\rq\ in the corollary above cannot be removed. Indeed, the interval $[1324,4231]$ is a Boolean algebra of rank 4 but the corresponding Bruhat interval polytope is of dimension~$3$. See Figure~\ref{fig:1324-4231}. On the other hand, for each positive integer $m$ there is an example of an interval $[v,w]$ of rank $m$ such that $[v,w]$ is a Boolean algebra and $Q_{v,w}$ is toric (so that $\rank [v,w]=\dim Q_{v,w}$), e.g. $w=s_ms_{m-1}\cdots s_2 s_1$ and $v=e$. In fact, $Q_{v,w}$ of this example is combinatorially equivalent to an $m$-cube.

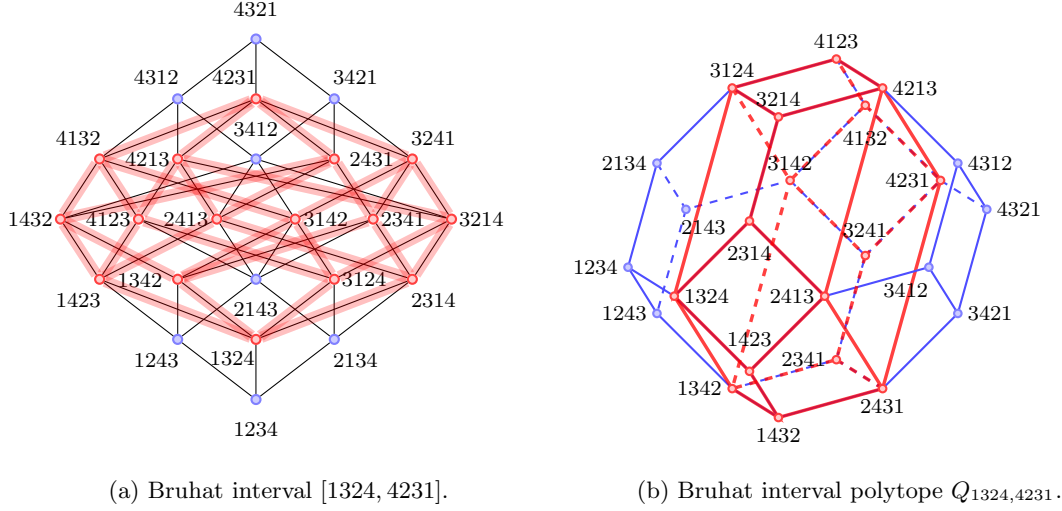
\begin{figure}
	\begin{subfigure}[b]{0.49\textwidth}
		\begin{tikzpicture}
		\tikzset{every node/.style={font=\footnotesize}}
		\tikzset{red node/.style = {fill=red!20!white, draw=red!75!white}}
		\tikzset{red line/.style = {line width=1ex, red,nearly transparent}}
		\matrix [matrix of math nodes,column sep={0.52cm,between origins},
		row sep={0.8cm,between origins},
		nodes={circle, draw=blue!50,fill=blue!20, thick, inner sep = 0pt , minimum size=1.2mm}]
		{
			& & & & & \node[label = {above:{4321}}] (4321) {} ; & & & & & \\
			& & &
			\node[label = {above left:4312}] (4312) {} ; & &
			\node[label = {above left:4231}, red node] (4231) {} ; & &
			\node[label = {above right:3421}] (3421) {} ; & & & \\
			& \node[label = {above left:4132}, red node] (4132) {} ; & &
			\node[label = {left:4213}, red node] (4213) {} ; & &
			\node[label = {above:3412}] (3412) {} ; & &
			\node[label = {[label distance = 0.1cm]0:2431}, red node] (2431) {} ; & &
			\node[label = {above right:3241}, red node] (3241) {} ; & \\
			\node[label = {left:1432}, red node] (1432) {} ; & &
			\node[label = {left:4123}, red node] (4123) {} ; & &
			\node[label = {[label distance = 0.01cm]180:2413}, red node] (2413) {} ; & &
			\node[label = {[label distance = 0.01cm]0:3142}, red node] (3142) {} ; & &
			\node[label = {right:2341}, red node] (2341) {} ; & &
			\node[label = {right:3214}, red node] (3214) {} ; \\
			& \node[label = {below left:1423}, red node] (1423) {} ; & &
			\node[label = {[label distance = 0.1cm]182:1342}, red node] (1342) {} ; & &
			\node[label = {below:2143}] (2143) {} ; & &
			\node[label = {right:3124}, red node] (3124) {} ; & &
			\node[label = {below right:2314}, red node] (2314) {} ; & \\
			& & & \node[label = {below left:1243}] (1243) {} ; & &
			\node[label = {[label distance = 0.01cm]190:1324}, red node] (1324) {} ; & &
			\node[label = {below right:2134}] (2134) {} ; & & & \\
			& & & & & \node[label = {below:1234}] (1234) {} ; & & & & & \\
		};
		
		\draw (4321)--(4312)--(4132)--(1432)--(1423)--(1243)--(1234)--(2134)--(2314)--(2341)--(3241)--(3421)--(4321);
		\draw (4321)--(4231)--(4132);
		\draw (4231)--(3241);
		\draw (4231)--(2431);
		\draw (4231)--(4213);
		\draw (4312)--(4213)--(2413)--(2143)--(3142)--(3241);
		\draw (4312)--(3412)--(2413)--(1423)--(1324)--(1234);
		\draw (3421)--(3412)--(3214)--(3124)--(1324);
		\draw (3421)--(2431)--(2341)--(2143)--(2134);
		\draw (4132)--(4123)--(1423);
		\draw (4132)--(3142)--(3124)--(2134);
		\draw (4213)--(4123)--(2143)--(1243);
		\draw (4213)--(3214);
		\draw (3412)--(1432)--(1342)--(1243);
		\draw (2431)--(1432);
		\draw (2431)--(2413)--(2314);
		\draw (3142)--(1342)--(1324);
		\draw (4123)--(3124);
		\draw (2341)--(1342);
		\draw (2314)--(1324);
		\draw (3412)--(3142);
		\draw (3241)--(3214)--(2314);
		
		\draw[red line] (1324)--(1423)--(1432)--(4132)--(4231);
		\draw[red line] (1324)--(2314)--(3214)--(3241)--(4231);
		\draw[red line] (1324)--(1342)--(1432);
		\draw[red line] (1324)--(3124)--(3214);
		\draw[red line] (2314)--(2341)--(2431)--(4231);
		\draw[red line] (1423)--(4123)--(4213)--(4231);
		\draw[red line](4123)--(4132);
		\draw[red line] (3214)--(4213);
		\draw[red line] (2341)--(3241);
		\draw[red line](1432)--(2431);
		\draw[red line] (1342)--(2341);
		\draw[red line] (3124)--(4123);
		\draw[red line] (1423)--(2413)--(4213);
		\draw[red line] (3124)--(3142)--(4132);
		\draw[red line] (3142)--(3241);
		\draw[red line] (1342)--(3142);
		\draw[red line] (2314)--(2413);
		\draw[red line] (2413)--(2431);
		
		\end{tikzpicture}
		\caption{Bruhat interval $[1324, 4231]$.}
	\end{subfigure}~
	\begin{subfigure}[b]{0.49\textwidth}
	\begin{tikzpicture}[scale=6]
		\tikzset{every node/.style={draw=blue!50,fill=blue!20, circle, thick, inner sep=1pt,font=\footnotesize}}
\tikzset{red node/.style = {fill=red!20!white, draw=red!75!white}}
\tikzset{red line/.style = {line width=0.3ex, red, nearly opaque}}
	
	\coordinate (3142) at (1/3, 1/2, 1/6);
	\coordinate (4231) at (2/3, 1/2, 1/6);
	\coordinate (4312) at (5/6, 2/3, 1/2);
	\coordinate (4321) at (5/6, 1/2, 1/3);
	\coordinate (3421) at (5/6, 1/3, 1/2);
	\coordinate (4213) at (2/3, 5/6, 1/2);
	\coordinate (1324) at (1/3, 1/2, 5/6);
	\coordinate (2413) at (2/3, 1/2, 5/6);
	\coordinate (3412) at (5/6, 1/2, 2/3);
	\coordinate (2314) at (1/2, 2/3, 5/6);
	\coordinate (4123) at (1/2, 5/6, 1/3);
	\coordinate (4132) at (1/2, 2/3, 1/6);
	\coordinate (3214) at (1/2, 5/6, 2/3);
	\coordinate (3124) at (1/3, 5/6, 1/2);
	\coordinate (2431) at (2/3, 1/6, 1/2);
	\coordinate (1432) at (1/2, 1/6, 2/3);
	\coordinate (1423)  at (1/2, 1/3, 5/6);
	\coordinate (1342)  at (1/3, 1/6, 1/2);
	\coordinate (2341) at (1/2, 1/6, 1/3);
	\coordinate (3241) at (1/2, 1/3, 1/6);
	\coordinate (1243) at (1/6, 1/3, 1/2);
	\coordinate (2143) at (1/6, 1/2, 1/3);
	\coordinate (1234) at (1/6, 1/2, 2/3);
	\coordinate (2134) at (1/6, 2/3, 1/2);
	
	\draw[red line] (2431)--(4231)--(4213);
	
	\draw[thick, draw=blue!70] (4213)--(4312)--(3412)--(2413)--(2314)--(3214)--cycle;
	\draw[thick, draw=blue!70] (4312)--(4321)--(3421)--(3412);
	\draw[thick, draw=blue!70] (3421)--(2431)--(1432)--(1423)--(2413);
	\draw[thick, draw=blue!70] (1423)--(1324)--(2314);
	\draw[thick, draw=blue!70] (1432)--(1342)--(1243)--(1234)--(1324);
	\draw[thick, draw=blue!70] (1234)--(2134)--(3124)--(3214);
	\draw[thick, draw=blue!70] (3124)--(4123)--(4213);
	
	\draw[thick, draw=blue!70, dashed] (2134)--(2143)--(3142)--(4132)--(4123);
	\draw[thick, draw=blue!70, dashed] (2143)--(1243);
	\draw[thick, draw=blue!70, dashed] (3142)--(3241)--(2341)--(1342);
	\draw[thick, draw=blue!70, dashed] (2341)--(2431);
	\draw[thick, draw=blue!70, dashed] (3241)--(4231)--(4132);
	\draw[thick, draw=blue!70, dashed] (4231)--(4321);

	\draw[red line] (2314)--(2413)--(4213)--(3214)--cycle;
	\draw[red line] (3214)--(3124)--(4123)--(4213);
	\draw[red line] (3124)--(1324)--(2314);
	\draw[red line] (1324)--(1423)--(2413);
	\draw[red line] (1324)--(1342)--(1432)--(1423);
	\draw[red line] (2413)--(2431)--(1432);

	\draw[red line, dashed] (3124)--(3142)--(4132)--(4123);
	\draw[red line, dashed] (4132)--(4231)--(3241)--(3142);
	\draw[red line, dashed] (3241)--(2341)--(1342)--(3142);
	\draw[red line, dashed] (2341)--(2431);

\node [label = {[label distance = 0cm]left:1234}] at (1234) {};
\node[label = {[label distance = 0cm]left:1243}] at (1243) {};
\node[label = {[label distance = 0cm]right:1324}, red node] at (1324) {};
\node[label = {[label distance = 0cm]left:1342}, red node] at (1342) {};
\node [label = {[label distance = 0cm]above:1423}, red node] at (1423) {};
\node[label = {[label distance = -0.2cm]below:1432}, red node] at (1432) {};
\node [label = {[label distance = 0cm]left:2134}] at (2134) {};
\node[label = {[label distance = -0.1cm]below right:2143}] at (2143) {};
\node[label = {[label distance = 0cm]below:2314}, red node] at (2314) {};
\node[label = {[label distance = 0cm]left:2341}, red node] at (2341) {};
\node[label = {[label distance = 0cm]left:2413}, red node] at (2413) {};
\node[label = {[label distance = -0.2cm]below:2431}, red node] at (2431) {};
\node[label = {[label distance = -0.2cm]above:3124}, red node] at (3124) {};
\node[label = {[label distance = -0.2cm]above:3142}, red node] at (3142) {};
\node[label = {[label distance = -0.2cm]above:3214}, red node] at (3214) {};
\node [label = {[label distance = -0.1cm]above:3241}, red node] at (3241) {};
\node[label = {[label distance = 0cm]below left:3412}] at (3412) {};
\node[label = {[label distance = 0cm]right:3421}] at (3421) {};
\node[label = {[label distance = -0.2cm]above:4123}, red node] at (4123) {};
\node [label = {[label distance = 0cm]below:4132}, red node] at (4132) {};
\node[label = {[label distance = 0cm]right:4213}, red node] at (4213) {};
\node[label = {[label distance = 0cm]left:4231}, red node] at (4231) {};
\node[label = {[label distance = 0cm]right:4312}] at (4312) {};
\node [label = {[label distance = 0cm]right:4321}] at (4321) {};

	\end{tikzpicture}
	\caption{Bruhat interval polytope $Q_{1324,4231}$.}
\end{subfigure}
	\caption{The interval $[1324,4231]$ is a Boolean algebra but $Q_{1324,4231}$ is not a cube.}\label{fig:1324-4231}
\end{figure}

\begin{corollary}\label{coro:combi-equiv}
If $Q_{v,w}$ is toric, then $Q_{v,w}$ and $Q_{v^{-1},w^{-1}}$ are combinatorially equivalent.
\end{corollary}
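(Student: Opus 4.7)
The plan is a short chain of three easy implications that have already been set up in the excerpt. First, since $Q_{v,w}$ is toric, Proposition~\ref{lem:dim} tells us that $Q_{v^{-1},w^{-1}}$ has the same dimension as $Q_{v,w}$, and because $\ell(v)=\ell(v^{-1})$, $\ell(w)=\ell(w^{-1})$, this forces
\[
\dim Q_{v^{-1},w^{-1}}=\ell(w^{-1})-\ell(v^{-1}),
\]
so $Q_{v^{-1},w^{-1}}$ is toric as well.

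Next, I would invoke the elementary fact that the map $z\mapsto z^{-1}$ is a poset isomorphism $[v,w]\xrightarrow{\sim}[v^{-1},w^{-1}]$ in Bruhat order; this just uses that $x\le y$ if and only if $x^{-1}\le y^{-1}$, which is standard (and is already used implicitly in the excerpt, e.g.\ in the statement $x\lessdot y$ iff $x^{-1}\lessdot y^{-1}$ in Remark~\ref{rmk_BIP_inverses_are_not_combinatorially_equivalent}).

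Finally, I would apply Corollary~\ref{coro:3-3}: because both $Q_{v,w}$ and $Q_{v^{-1},w^{-1}}$ are toric, each of their combinatorial types is determined by the poset structure of the corresponding Bruhat interval. Since those two posets are isomorphic by the previous paragraph, the face posets of $Q_{v,w}$ and $Q_{v^{-1},w^{-1}}$ agree, which is precisely the statement that the two polytopes are combinatorially equivalent.

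There is really no obstacle here: the content of the corollary has already been packaged into Proposition~\ref{lem:dim} and Corollary~\ref{coro:3-3}, so the proof is essentially one line assembling them. The only thing to be careful about is to emphasize that the toric hypothesis is used \emph{twice}: once through Proposition~\ref{lem:dim} to ensure $Q_{v^{-1},w^{-1}}$ is also toric, and once through Corollary~\ref{coro:3-3} to conclude that combinatorial type is determined by the poset structure on each side (as the counterexample $[1324,4231]$ in Figure~\ref{fig:1324-4231} shows, without toricity the poset structure alone does not determine the combinatorial type).
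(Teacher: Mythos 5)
Your proof is correct and is essentially the argument the paper intends: the corollary is stated without proof because it follows immediately from Corollary~\ref{coro:3-3} applied to both polytopes (with Proposition~\ref{lem:dim} guaranteeing that $Q_{v^{-1},w^{-1}}$ is also toric) together with the poset isomorphism $z\mapsto z^{-1}$ between $[v,w]$ and $[v^{-1},w^{-1}]$. Your explicit spelling out of where the toric hypothesis enters is a faithful, slightly more careful rendering of the same route.
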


\begin{remark}
For $n\leq 4$ and for every pair of~$v$ and~$w$ in $\mathfrak{S}_n$ with $v\leq w$, $Q_{v,w}$ is combinatorially equivalent to $Q_{v^{-1},w^{-1}}$. But for $n=5$ there are 160 pairs of~$v$ and~$w$ such that $Q_{v,w}$ is not combinatorially equivalent to $Q_{v^{-1},w^{-1}}$.
\end{remark}

\noindent
{\bf Convention.}
In the following, when a polytope~$Q$ is combinatorially equivalent to a cube (or a $d$-cube), we simply say that~$Q$ is a cube (or a $d$-cube). We also say that an interval $[v,w]$ is \emph{Boolean} if it is a Boolean algebra.

\medskip

We recall the following fact from \cite[Problems and Exercises 0.1 in p.23]{zieg98} (see also \cite[Appendix]{yu-ma16}):

\begin{lemma} \label{lemm:3-4}
If $Q$ is a simple polytope of dimension $\ge 2$ and every $2$-face of $Q$ is a $2$-cube, then $Q$ is a cube.
\end{lemma}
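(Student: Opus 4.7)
The plan is to induct on $d = \dim Q$. The base case $d = 2$ is immediate since $Q$ is its own (unique) $2$-face, which is a $2$-cube by hypothesis. Suppose the statement holds for dimensions less than $d$, where $d \geq 3$. The first step is to verify that every facet $F$ of $Q$ is a $(d-1)$-cube: $F$ is simple because $Q$ is simple, and every $2$-face of $F$ is a $2$-face of $Q$ and hence a $2$-cube, so the inductive hypothesis applies to $F$.

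The next step is to build a combinatorial equivalence between $Q$ and the standard $d$-cube by parallel-transporting edge directions. Fix a vertex $v_0$; by simplicity it has exactly $d$ edges $e_1, \ldots, e_d$. Define an equivalence relation $\sim$ on the edge set of $Q$ generated by the rule that two edges are equivalent if they are opposite edges of some $2$-face. Since every $2$-face is a quadrilateral, this rule is unambiguous locally. Using that the $2$-skeleton of a convex polytope is simply connected (so every closed edge-loop bounds a $2$-chain of $2$-faces) and that within a $2$-cube the opposite-edge pairing is consistent, I would show $\sim$ has exactly $d$ equivalence classes $[e_1], \ldots, [e_d]$, and at each vertex of $Q$ exactly one edge lies in each class.

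Once the classes are established, define $\varphi \colon V(Q) \to \{0,1\}^d$ by sending $v$ to the vector whose $i$th coordinate is the parity of the number of edges of class $[e_i]$ along any edge-path from $v_0$ to $v$; well-definedness again follows from the $2$-cube structure of the $2$-faces. The map $\varphi$ is a graph isomorphism between the $1$-skeleton of $Q$ and the $1$-skeleton of the $d$-cube. To upgrade this to a combinatorial equivalence of face lattices, use the first step: each facet of $Q$ is a $(d-1)$-cube, and each class $[e_i]$ determines a pair of complementary facets of $Q$ (the two preimages under $\varphi$ of the coordinate hyperplanes $\{x_i = 0\}$ and $\{x_i = 1\}$), matching the $2d$-facet structure of the $d$-cube. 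Since a simple polytope is determined up to combinatorial equivalence by its vertex-facet incidences, this finishes the proof.

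The main obstacle is the well-definedness of the equivalence classes and of the map $\varphi$, i.e., showing that parallel-transport of edges across $2$-faces is path-independent. This is the place where simple-connectedness of the $2$-skeleton of $Q$ has to be used carefully, reducing the question to consistency around each individual $2$-face, where the $2$-cube hypothesis gives exactly what is needed.
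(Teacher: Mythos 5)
The paper itself gives no proof of this lemma: it is quoted as a known fact from Ziegler's book (Problems and Exercises~0.1, p.~23) and from the appendix of Yu--Masuda, so there is nothing in-paper to compare your argument against line by line. Your route --- combinatorial parallel classes of edges plus a parity map to $\{0,1\}^d$ --- is a standard and viable way to solve that exercise; the other standard route is to show directly that $Q$ is combinatorially $F\times[0,1]$ for a facet $F$: each vertex $v$ of $F$ has a unique edge $e_v$ leaving $F$, simplicity plus the quadrilateral hypothesis shows $v\mapsto\bar v$ (the other endpoint of $e_v$) maps edges of $F$ to edges, and connectivity of the graph of $Q$ shows $V(F)\cup\{\bar v\}$ exhausts $V(Q)$. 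That version avoids most of the global bookkeeping your plan requires.

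The concrete gap in your plan is the claim that at each vertex exactly one edge lies in each class. Simple connectivity of the $2$-skeleton is the right tool for \emph{path-independence} of parallel transport, but only after you know that transport around the boundary of a single $2$-face acts as the identity on the $d$ edges at the base vertex, and neither fact is the same as (though together they do imply) the statement that the generated relation never identifies two distinct edges at a common vertex. To make this airtight: for an edge $e=uv$ define the bijection $\tau_e$ from the edges at $u$ to the edges at $v$ sending $f$ to the edge opposite $f$ in the $2$-face spanned by $e$ and $f$ (this uses simplicity); observe that $f\sim f'$ exactly when $f'=\tau_\gamma(f)$ for some edge path $\gamma$; then prove that the holonomy around $\partial G$ is trivial for each $2$-face $G$ --- this is a genuine local computation, which reduces via simplicity to the $3$-face spanned by $G$ and the transported edge, i.e.\ essentially to the $d=3$ case --- and only then invoke simple connectivity to kill all monodromy, which yields one edge per class per vertex and hence exactly $d$ classes. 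Two further points are asserted where they need (short) arguments: injectivity of $\varphi$ (surjectivity is easy, injectivity is not; one way is to show the facet at $v_0$ missing $e_i$ contains no class-$i$ edge, so it lands in a coordinate hyperplane, and induct), and the identification of the $2d$ facets with the classes. None of these steps fails, so the plan is completable, but the crux of the exercise is exactly the holonomy statement you attribute to ``the $2$-cube hypothesis gives exactly what is needed,'' and that is the part that must actually be proved.
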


This lemma implies the following.

\begin{proposition} \label{prop:3-5}
Suppose that $Q_{v,w}$ is toric. Then $Q_{v,w}$ is a cube if and only if it is simple. {\rm(}This is equivalent to saying that a smooth toric Richardson variety is a Bott manifold{.\rm)}
\end{proposition}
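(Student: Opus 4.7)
The forward implication is trivial since every cube is a simple polytope, so the plan focuses on the reverse: assume $Q_{v,w}$ is toric and simple, and show it is combinatorially a cube. If $\dim Q_{v,w}\leq 1$, then $Q_{v,w}$ is a point or a segment, both trivially cubes, so I may assume $\dim Q_{v,w}\geq 2$ and invoke Lemma~\ref{lemm:3-4}. The whole proof then reduces to verifying that every $2$-face of $Q_{v,w}$ is combinatorially a $2$-cube (a quadrilateral).

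By \cite[Theorem 4.1]{ts-wi15}, every face of $Q_{v,w}$ has the form $Q_{x,y}$ for some subinterval $[x,y]\subset [v,w]$. The first key observation is that each such face $Q_{x,y}$ is again toric. Indeed, any subinterval $[x',y']\subset [x,y]$ is also a subinterval of $[v,w]$, and since $Q_{v,w}$ is toric, Theorem~\ref{prop:3-2} guarantees that $Q_{x',y'}$ is a face of $Q_{v,w}$ contained in $Q_{x,y}$, hence a face of $Q_{x,y}$. Applying the converse implication of Theorem~\ref{prop:3-2} to $Q_{x,y}$ yields that $Q_{x,y}$ is toric, i.e. $\dim Q_{x,y}=\ell(y)-\ell(x)$.

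Now let $F=Q_{x,y}$ be any $2$-face of $Q_{v,w}$. By the observation above, $\ell(y)-\ell(x)=\dim F=2$. The classical diamond property of the Bruhat order on $\mathfrak{S}_n$ (or on any Coxeter group) asserts that an interval of length $2$ has exactly four elements: the two endpoints together with exactly two intermediate atoms at rank $\ell(x)+1$. Consequently $F$ is a $2$-dimensional polytope with exactly four vertices, i.e.\ a quadrilateral, which is combinatorially a $2$-cube.

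Having verified the hypotheses of Lemma~\ref{lemm:3-4}, we conclude that the simple polytope $Q_{v,w}$ is combinatorially a cube. I do not anticipate a genuine obstacle in this argument; the only subtle point is the inheritance of the toric property to faces, but this is handled cleanly by applying Theorem~\ref{prop:3-2} in both directions, together with the elementary diamond property of Bruhat intervals.
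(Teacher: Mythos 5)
Your argument is correct and is essentially the paper's own proof: both reduce to Lemma~\ref{lemm:3-4} by showing that every $2$-face of a toric $Q_{v,w}$ comes from a length-$2$ Bruhat interval, which is a diamond and hence yields a quadrilateral. Your extra care in justifying that faces inherit the toric property (via both directions of Theorem~\ref{prop:3-2}) is a valid elaboration of the step the paper states more briefly.
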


\begin{proof}
Since the \lq\lq only if\rq\rq\ part is trivial, it suffices to prove the \lq\lq if \rq\rq part. Since $Q_{v,w}$ is toric, every $k$-face of $Q_{v,w}$ corresponds to a $k$-interval in $[v,w]$. Since every $2$-interval is a diamond (see~\cite[Lemma 2.7.3]{BB_combinatorics05}), every $2$-face of $Q_{v,w}$ is a $2$-cube. Therefore, if $Q_{v,w}$ is simple, then $Q_{v,w}$ is a cube by Lemma~\ref{lemm:3-4}.
\end{proof}

\begin{figure}[t]
	\begin{subfigure}[b]{0.49\textwidth}
		\begin{tikzpicture}[scale=.7]
		\tikzset{every node/.style={font=\footnotesize}}
		\matrix [matrix of math nodes,column sep={0.52cm,between origins},
		row sep={0.8cm,between origins},
		nodes={circle, draw=blue!50,fill=blue!20, thick, inner sep = 0pt , minimum size=1.2mm}]
		{
			& & & & & \node[label = {above:{4321}}] (4321) {} ; & & & & & \\
			& & &
			\node[label = {above left:4312}] (4312) {} ; & &
			\node[label = {above left:4231}] (4231) {} ; & &
			\node[label = {above right:3421}] (3421) {} ; & & & \\
			& \node[label = {above left:4132},fill=red!20!white, draw=red!75!white] (4132) {} ; & &
			\node[label = {left:4213}] (4213) {} ; & &
			\node[label = {above:3412}] (3412) {} ; & &
			\node[label = {[label distance = 0.1cm]0:2431}] (2431) {} ; & &
			\node[label = {above right:3241}] (3241) {} ; & \\
			\node[label = {left:1432}, fill=red!20!white, draw=red!75!white] (1432) {} ; & &
			\node[label = {left:4123}, fill=red!20!white, draw=red!75!white] (4123) {} ; & &
			\node[label = {[label distance = 0.01cm]180:2413}] (2413) {} ; & &
			\node[label = {[label distance = 0.01cm]0:3142}, fill=red!20!white, draw=red!75!white] (3142) {} ; & &
			\node[label = {right:2341}] (2341) {} ; & &
			\node[label = {right:3214}] (3214) {} ; \\
			& \node[label = {below left:1423}, fill=red!20!white, draw=red!75!white] (1423) {} ; & &
			\node[label = {[label distance = 0.1cm]182:1342}, fill=red!20!white, draw=red!75!white] (1342) {} ; & &
			\node[label = {below:2143}, fill=red!20!white, draw=red!75!white] (2143) {} ; & &
			\node[label = {right:3124}] (3124) {} ; & &
			\node[label = {below right:2314}] (2314) {} ; & \\
			& & & \node[label = {below left:1243}, fill=red!20!white, draw=red!75!white ] (1243) {} ; & &
			\node[label = {[label distance = 0.01cm]190:1324}] (1324) {} ; & &
			\node[label = {below right:2134}] (2134) {} ; & & & \\
			& & & & & \node[label = {below:1234}] (1234) {} ; & & & & & \\
		};
		
		\draw (4321)--(4312)--(4132)--(1432)--(1423)--(1243)--(1234)--(2134)--(2314)--(2341)--(3241)--(3421)--(4321);
		\draw (4321)--(4231)--(4132);
		\draw (4231)--(3241);
		\draw (4231)--(2431);
		\draw (4231)--(4213);
		\draw (4312)--(4213)--(2413)--(2143)--(3142)--(3241);
		\draw (4312)--(3412)--(2413)--(1423)--(1324)--(1234);
		\draw (3421)--(3412)--(3214)--(3124)--(1324);
		\draw (3421)--(2431)--(2341)--(2143)--(2134);
		\draw (4132)--(4123)--(1423);
		\draw (4132)--(3142)--(3124)--(2134);
		\draw (4213)--(4123)--(2143)--(1243);
		\draw (4213)--(3214);
		\draw (3412)--(1432)--(1342)--(1243);
		\draw (2431)--(1432);
		\draw (2431)--(2413)--(2314);
		\draw (3142)--(1342)--(1324);
		\draw (4123)--(3124);
		\draw (2341)--(1342);
		\draw (2314)--(1324);
		\draw (3412)--(3142);
		\draw (3241)--(3214)--(2314);

		\draw[line width=1ex, red,nearly transparent] (1243)--(1423)--(1432)--(4132)--(4123)--(1423);
		\draw[line width=1ex, red,nearly transparent] (1243)--(1342)--(1432);
		\draw[line width=1ex, red,nearly transparent] (1243)--(2143)--(3142)--(4132);
		\draw[line width=1ex, red,nearly transparent] (2143)--(4123);
		\draw[line width=1ex, red,nearly transparent] (1342)--(3142);
		
		\end{tikzpicture}
		\subcaption{Bruhat interval $[1243, 4132]$.}
	\end{subfigure}
	\begin{subfigure}[b]{0.49\textwidth}
		\begin{tikzpicture}[scale=6]
		\tikzset{every node/.style={draw=blue!50,fill=blue!20, circle, thick, inner sep=1pt,font=\footnotesize}}
		\tikzset{red node/.style = {fill=red!20!white, draw=red!75!white}}
		\tikzset{red line/.style = {line width=0.3ex, red, nearly opaque}}
		
		\coordinate (3142) at (1/3, 1/2, 1/6);
		\coordinate (4231) at (2/3, 1/2, 1/6);
		\coordinate (4312) at (5/6, 2/3, 1/2);
		\coordinate (4321) at (5/6, 1/2, 1/3);
		\coordinate (3421) at (5/6, 1/3, 1/2);
		\coordinate (4213) at (2/3, 5/6, 1/2);
		\coordinate (1324) at (1/3, 1/2, 5/6);
		\coordinate (2413) at (2/3, 1/2, 5/6);
		\coordinate (3412) at (5/6, 1/2, 2/3);
		\coordinate (2314) at (1/2, 2/3, 5/6);
		\coordinate (4123) at (1/2, 5/6, 1/3);
		\coordinate (4132) at (1/2, 2/3, 1/6);
		\coordinate (3214) at (1/2, 5/6, 2/3);
		\coordinate (3124) at (1/3, 5/6, 1/2);
		\coordinate (2431) at (2/3, 1/6, 1/2);
		\coordinate (1432) at (1/2, 1/6, 2/3);
		\coordinate (1423)  at (1/2, 1/3, 5/6);
		\coordinate (1342)  at (1/3, 1/6, 1/2);
		\coordinate (2341) at (1/2, 1/6, 1/3);
		\coordinate (3241) at (1/2, 1/3, 1/6);
		\coordinate (1243) at (1/6, 1/3, 1/2);
		\coordinate (2143) at (1/6, 1/2, 1/3);
		\coordinate (1234) at (1/6, 1/2, 2/3);
		\coordinate (2134) at (1/6, 2/3, 1/2);
		
		\draw[thick, draw=blue!70, dashed] (2134)--(2143)--(3142)--(4132)--(4123);
		\draw[thick, draw=blue!70, dashed] (2143)--(1243);
		\draw[thick, draw=blue!70, dashed] (3142)--(3241)--(2341)--(1342);
		\draw[thick, draw=blue!70, dashed] (2341)--(2431);
		\draw[thick, draw=blue!70, dashed] (3241)--(4231)--(4132);
		\draw[thick, draw=blue!70, dashed] (4231)--(4321);
		
		\draw[red line] (1243)--(2143)--(2413);
		
		\draw[thick, draw=blue!70] (4213)--(4312)--(3412)--(2413)--(2314)--(3214)--cycle;
		\draw[thick, draw=blue!70] (4312)--(4321)--(3421)--(3412);
		\draw[thick, draw=blue!70] (3421)--(2431)--(1432)--(1423)--(2413);
		\draw[thick, draw=blue!70] (1423)--(1324)--(2314);
		\draw[thick, draw=blue!70] (1432)--(1342)--(1243)--(1234)--(1324);
		\draw[thick, draw=blue!70] (1234)--(2134)--(3124)--(3214);
		\draw[thick, draw=blue!70] (3124)--(4123)--(4213);
		
		\draw[red line, dashed] (2143)--(2341)--(2431);
		\draw[red line, dashed] (2341)--(1342);
		
		\draw[red line] (1423)--(1432)--(2431)--(2413)--cycle;
		\draw[red line] (1423)--(1243);
		\draw[red line] (1243)--(1342)--(1432);
		
		\node [label = {[label distance = 0cm]left:1234}] at (1234) {};
		\node[label = {[label distance = 0cm]left:1243}, red node] at (1243) {};
		\node[label = {[label distance = 0cm]right:1324}] at (1324) {};
		\node[label = {[label distance = 0cm]left:1342}, red node] at (1342) {};
		\node [label = {[label distance = 0cm]above:1423},red node] at (1423) {};
		\node[label = {[label distance = -0.2cm]below:1432}, red node] at (1432) {};
		\node [label = {[label distance = 0cm]left:2134}] at (2134) {};
		\node[label = {[label distance = -0.1cm]above right:2143}, red node] at (2143) {};
		\node[label = {[label distance = 0cm]right:2314}] at (2314) {};
		\node[label = {[label distance = -0.1cm]below left:2341}, red node] at (2341) {};
		\node[label = {[label distance = -0.1cm]above :2413}, red node] at (2413) {};
		\node[label = {[label distance = -0.2cm]below:2431}, red node] at (2431) {};
		\node[label = {[label distance = -0.2cm]above:3124}] at (3124) {};
		\node[label = {[label distance = -0.2cm]above:3142}] at (3142) {};
		\node[label = {[label distance = -0.2cm]above:3214}] at (3214) {};
		\node [label = {[label distance = -0.1cm]above:3241}] at (3241) {};
		\node[label = {[label distance = 0cm]below left:3412}] at (3412) {};
		\node[label = {[label distance = 0cm]right:3421}] at (3421) {};
		\node[label = {[label distance = -0.2cm]above:4123}] at (4123) {};
		\node [label = {[label distance = 0cm]below:4132}] at (4132) {};
		\node[label = {[label distance = 0cm]right:4213}] at (4213) {};
		\node[label = {[label distance = 0cm]left:4231}] at (4231) {};
		\node[label = {[label distance = 0cm]right:4312}] at (4312) {};
		\node [label = {[label distance = 0cm]right:4321}] at (4321) {};

		\end{tikzpicture}
		\subcaption{ Bruhat interval polytope $Q_{1243, 2431}$.}
	\end{subfigure}
	\caption{An example of a Bruhat interval polytope which is a cube.}\label{fig:1243-2431}
\end{figure}
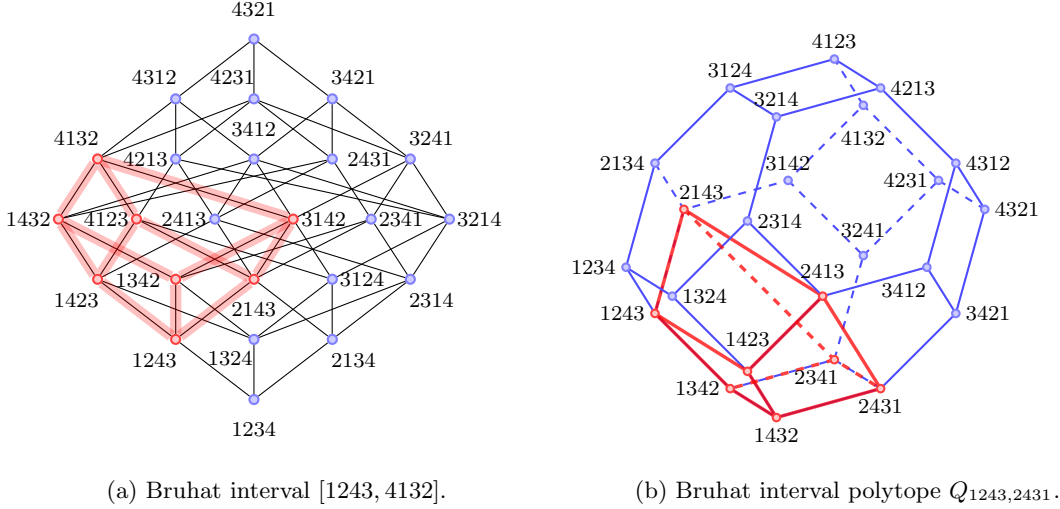

The following gives a characterization of when $Q_{v,w}$ is a cube and also proves Theorem~\ref{thm:main}.

\begin{theorem} \label{theo:3-6}
A Bruhat interval polytope $Q_{v,w}$ is a cube if and only if it is toric and $[v,w]$ is Boolean. {\rm(}This is equivalent to saying that a Richardson variety $X^v_w$ is a Bott manifold if and only if it is toric and $[v,w]$ is Boolean{.\rm)}
\end{theorem}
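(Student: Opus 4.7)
The plan is to handle the two directions of the equivalence separately. For $(\Leftarrow)$, assume $Q_{v,w}$ is toric and $[v,w]$ is Boolean of rank $d=\ell(w)-\ell(v)$. By Theorem~\ref{prop:3-2}, every subinterval of $[v,w]$ is realized as a face of $Q_{v,w}$, so each cover incident to a fixed $u\in [v,w]$ (either up or down) produces an edge of $Q_{v,w}$ at that vertex. In a Boolean algebra of rank $d$, every element has exactly $d$ covers in total, so each vertex of $Q_{v,w}$ has degree $d=\dim Q_{v,w}$, making $Q_{v,w}$ simple. Proposition~\ref{prop:3-5} then forces $Q_{v,w}$ to be a $d$-cube.

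For $(\Rightarrow)$, assume $Q_{v,w}$ is combinatorially a $d$-cube. I would first show toricness. By Corollary~\ref{cor:edge_vectors}, every edge of $Q_{v,w}$ has the form $\{u,ut\}$ with $\ell(ut)=\ell(u)\pm 1$, so adjacent vertices in the $1$-skeleton differ in length by exactly one. Pick a geodesic from $v$ to $w$ in the cube; it has at most $d$ edges (the cube's diameter), so $\ell(w)-\ell(v)\le d$. Combined with $\ell(w)-\ell(v)\ge\dim Q_{v,w}=d$ from~\eqref{eq_dim_Qvw_and_length}, this forces $\ell(w)-\ell(v)=d$, and $Q_{v,w}$ is toric.

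Next I would identify $[v,w]$ with $2^{[d]}$ as a poset. Fix a combinatorial isomorphism $\chi\colon [v,w]\to\{0,1\}^d$ of $Q_{v,w}$ with the standard $d$-cube satisfying $\chi(v)=0^d$; since graph distance dominates rank distance and the two agree at~$d$ between $v$ and $w$, the images $\chi(v)$ and $\chi(w)$ must be antipodal, i.e., $\chi(w)=1^d$. For each $u\in[v,w]$, the inequalities
\[
\ell(u)-\ell(v)\le |\chi(u)|\quad\text{and}\quad \ell(w)-\ell(u)\le d-|\chi(u)|
\]
have the same sum on both sides, namely $d$, so both must be equalities; hence $\ell(u)-\ell(v)=|\chi(u)|$. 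Because $Q_{v,w}$ is now toric, Theorem~\ref{prop:3-2} ensures that the covers of $[v,w]$ correspond bijectively to the $1$-faces of $Q_{v,w}$, i.e., to the Hamming-distance-$1$ pairs under $\chi$. For $u\le u'$ in $[v,w]$, a maximal chain between them translates into a monotone cube path adding one coordinate at each step, forcing $\chi(u)\subseteq\chi(u')$; conversely, any monotone cube path from $\chi(u)$ to $\chi(u')$ assembles a chain witnessing $u\le u'$. Thus $\chi$ is an order isomorphism onto $2^{[d]}$, and $[v,w]$ is Boolean of rank~$d$.

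The hardest part is Step~2 of the forward direction, specifically aligning the rank function on $[v,w]$ with the Hamming weight on $\{0,1\}^d$ under a labeling that is a priori only combinatorial. The trick is to play the rank-versus-graph-distance bound on the $v$-side against the same bound on the $w$-side and use $\ell(w)-\ell(v)=d$ to force equality throughout, and then to exploit toricness (secured in Step~1), via Theorem~\ref{prop:3-2}, to translate Hamming $1$-adjacency in the cube back into genuine Bruhat covers and hence into order comparisons.
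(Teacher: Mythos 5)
Your proof is correct. The ``if'' direction is essentially identical to the paper's: toricness plus Theorem~\ref{prop:3-2} turns every cover at a vertex into an edge, Booleanness gives each vertex exactly $\ell(w)-\ell(v)$ covers, so the polytope is simple and Proposition~\ref{prop:3-5} finishes. The ``only if'' direction, however, is genuinely different. The paper argues by induction on the dimension $m$ of the cube: it takes two disjoint facets $Q_{v,q}$ and $Q_{r,w}$, applies the inductive hypothesis to see they are Boolean, counts coatoms to force $\ell(q)=\ell(w)-1$ and $\ell(r)=\ell(v)+1$ and hence toricness, and then runs a second, rather delicate induction (identifying $[v,q]$ with $2^{[m-1]}$ and tracking which rank-$k$ subsets each element of $[r,w]$ must cover) to certify the Boolean structure. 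You instead exploit the metric structure of the cube's $1$-skeleton directly: since every edge of $Q_{v,w}$ joins a Bruhat cover pair (Corollary~\ref{cor:edge_vectors} / the fact that a $1$-face is $Q_{x,y}$ with $x\lessdot y$), graph distance dominates length difference, and playing the diameter bound against \eqref{eq_dim_Qvw_and_length} forces toricness; the same two-sided squeeze identifies rank with Hamming weight under any cube labeling, after which the edge--cover bijection (available once toricness is known, via Theorem~\ref{prop:3-2}) transports the order both ways and exhibits $[v,w]\cong 2^{[d]}$. Your route avoids induction entirely and isolates the one structural input actually needed (edges are covers); the paper's route has the mild advantage of reusing its facet machinery and of explicitly producing the coatom counts it exploits elsewhere, but your argument is shorter and, to my eye, more transparent.
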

In Figure~\ref{fig:1243-2431}, one can see an example of a Bruhat interval polytope $Q_{v,w}$ which is  toric and the interval $[v,w]$ is Boolean.
Combining the above theorem with Corollary~\ref{coro:combi-equiv}, we get the following.
\begin{corollary}\label{coro:cube-equiv}
A Bruhat interval polytope $Q_{v,w}$ is a cube if and only if $Q_{v^{-1},w^{-1}}$ is a cube.
\end{corollary}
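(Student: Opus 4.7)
The plan is to derive this corollary as an immediate consequence of two results already established in the excerpt: Theorem~\ref{theo:3-6} (the cube characterization) and Corollary~\ref{coro:combi-equiv} (combinatorial equivalence of $Q_{v,w}$ and $Q_{v^{-1},w^{-1}}$ in the toric case). The observation that unlocks everything is that being a cube automatically forces the toric hypothesis, so Corollary~\ref{coro:combi-equiv} becomes applicable.

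More precisely, I would first suppose $Q_{v,w}$ is a cube. By Theorem~\ref{theo:3-6}, this already implies that $Q_{v,w}$ is toric. Invoking Corollary~\ref{coro:combi-equiv}, the polytopes $Q_{v,w}$ and $Q_{v^{-1},w^{-1}}$ are then combinatorially equivalent, so $Q_{v^{-1},w^{-1}}$ is a cube as well. The converse implication follows from the same argument applied with $(v,w)$ replaced by $(v^{-1},w^{-1})$, since $(v^{-1})^{-1}=v$ and $(w^{-1})^{-1}=w$.

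As a sanity check, this is genuinely consistent with Theorem~\ref{theo:3-6} applied on both sides: the toric property is symmetric under inversion by Proposition~\ref{lem:dim}, and the Boolean property is symmetric because $u \mapsto u^{-1}$ is an order-isomorphism between $[v,w]$ and $[v^{-1},w^{-1}]$ (as noted in the excerpt, $x \lessdot y$ in $[v,w]$ iff $x^{-1}\lessdot y^{-1}$ in $[v^{-1},w^{-1}]$), so $[v,w]$ is Boolean iff $[v^{-1},w^{-1}]$ is Boolean. Thus both defining conditions in Theorem~\ref{theo:3-6} survive inversion, giving a second, independent route to the corollary. There is no real obstacle here: the work was done in Theorem~\ref{theo:3-6} and Corollary~\ref{coro:combi-equiv}, and the corollary is essentially a one-line deduction.
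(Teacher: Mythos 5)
Your proof is correct and matches the paper's argument exactly: the paper obtains this corollary by combining Theorem~\ref{theo:3-6} (a cube is necessarily toric) with Corollary~\ref{coro:combi-equiv}, precisely as you do. The symmetry check via Proposition~\ref{lem:dim} and the order-isomorphism $u\mapsto u^{-1}$ is a valid alternative confirmation but not needed.
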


\begin{proof}[Proof of Theorem~\ref{theo:3-6}]
First we prove the \lq\lq if\rq\rq\ part. Since $[v,w]$ is Boolean, every element $u\in [v,w]$ covers $\ell(u)-\ell(v)$ elements and is covered by $\ell(w)-\ell(u)$ elements. Since $Q_{v,w}$ is toric, $\dim Q_{v,w}=\ell(w)-\ell(v)$ and any edge incident to the vertex $u$ in $Q_{v,w}$ is obtained from the cover relations by {Theorem}~\ref{prop:3-2}. This shows that $Q_{v,w}$ is simple. Therefore, $Q_{v,w}$ is a cube by Proposition~\ref{prop:3-5}.

The rest of the proof is devoted to the proof of the \lq\lq only if\rq\rq\ part. We shall prove it by induction on the dimension $m$ of the cube. It is obvious when $m=1$. Suppose that it holds for $m-1$ and that $Q_{v,w}$ is an $m$-cube. Then $Q_{v,w}$ has two disjoint facets, both of which are an $(m-1)$-cube. We denote those facets by $Q_{p,q}$ and $Q_{r,s}$. Then
\begin{enumerate}
\item $[p,q]\cap [r,s]=\emptyset$, $[p,q]\cup [r,s]=[v,w]$, and
\item $[p,q]$ and $[r,s]$ are both Boolean by induction assumption.
\end{enumerate}
By (1), we may assume that $p=v$ and $s=w$ without loss of generality. Then
\begin{equation} \label{eq:3-3}
\ell(q)=\ell(w)-1\quad\text{and}\quad \ell(r)=\ell(v)+1.
\end{equation}
Indeed, if $\ell(q)\le \ell(w)-2$, then no element in $[v,q]$ is covered by $w$ while there are exactly $m-1$ elements in $[r,w]$ covered by $w$ because $[r,w]$ is Boolean and of rank $m-1$. Therefore, the number of coatoms in $[v,w]$ is $m-1$. On the other hand, since $Q_{v,w}$ is an $m$-cube, there must be $m$ coatoms in $[v,w]$. This is a contradiction. Therefore, $\ell(q)=\ell(w)-1$. A similar argument shows that $\ell(r)=\ell(v)+1$.

Since $Q_{v,q}$ is an $(m-1)$-cube, we have $\dim Q_{v,q}=\ell(q)-\ell(v)$ by induction assumption. Here $\ell(q)=\ell(w)-1$ by \eqref{eq:3-3} and $\dim Q_{v,q}+1=\dim Q_{v,w}$ since $Q_{v,q}$ is a facet of $Q_{v,w}$. These show that $m=\dim Q_{v,w}=\ell(w)-\ell(v)$, i.e., $Q_{v,w}$ is toric.

We need to show that $[v,w]$ is Boolean. Since $Q_{v,w}$ is toric, any interval $[x,y]\subset [v,w]$ produces a face of $Q_{v,w}$ by {Theorem}~\ref{prop:3-2}.
This in particular shows that the numbers of atoms and coatoms in $[v,w]$ are both $m=\ell(w)-\ell(v)$ since $Q_{v,w}$ is an $m$-cube. Moreover, since $Q_{v,w}$ is a cube, so is $Q_{x,y}$ and its dimension is $\ell(y)-\ell(x)$ as observed above.
Therefore if $[x,y]$ is a proper subset of $[v,w]$, then one can apply the induction assumption to $Q_{x,y}$ so that $[x,y]$ is Boolean.
In particular, for $u\in (v,w)$, $[v,u]$ and $[u,w]$ are proper subsets of $[v,w]$ so that they are Boolean. This means that $u$ covers exactly $\ell(u)-\ell(v)$ elements and is covered by exactly $\ell(w)-\ell(u)$ elements. (This holds even when $u=v$ or $u=w$ as observed above.)
This almost shows that $[v,w]$ is Boolean but we have to observe cover relations among elements to conclude that it is Boolean.

For $u\in [v,w]$ we set $\rank u=\ell(u)-\ell(v)$.
As observed at the beginning of the proof of the \lq\lq only if\rq\rq\ part, there are elements $q, r\in [v,w]$ such that
\begin{enumerate}
\item $\rank q=m-1$, $\rank r=1$, and
\item $[v,q]\cap [r,w]=\emptyset$, $[v,q]\cup [r,w]=[v,w]$.
\end{enumerate}
Since $[v,q]$ is Boolean and of rank $m-1$ and $\rank v=0$, we may regard $[v,q]$ as the Boolean algebra obtained from the set $[m-1]$. Then the atoms of $[v,w]$ are $\{1\},\{2\},\dots,\{m\}$. Since $\rank r=1$, we regard $r$ as $\{m\}$.  Since $[r,w]$ is Boolean and of rank $m-1$ and $\rank r=1$, there are $m-1$ elements of rank $2$ in $[r,w]$ and each of them covers $r(=\{m\})$ but since they are of rank $2$, each of them must cover one more element which is in $[v,q]$. We denote by $\{i,m\}$ $(i\in [m-1])$ the rank 2 element in $[r,w]$ which covers the atoms $\{i\}$ and $\{m\}$.
Since $[r,w]$ is Boolean and of rank $m-1$, we may regard rank $k+1$ elements in $[r,w]$ as $\{i_1,\dots,i_{k},m\}$ where $\{i_1,\dots,i_{k}\}$ is a subset of $[m-1]$ and $\{i_1,\dots,i_{k},m\}$ covers $\{i_1,\dots,\widehat{i_j},\dots,i_k,m\}$ $(1\le j\le k)$.

The element $\{i_1,\dots,i_{k},m\}$ is of rank $k+1$ and already covers $k$ elements $\{i_1,\dots,\widehat{i_j},\dots,i_k,m\}$ $(1\le j\le k)$. Therefore, it suffices to show that
\[
(*) \qquad \text{$\{i_1,\dots,i_k,m\}$ covers the element $\{i_1,\dots,i_k\}$ in $[v,q]$.}
\]
When $k=m-1$, the element $\{i_1,\dots,i_k,m\}$ is the entire set $\{1,\dots,m\}$ (that is $w$). In this case we already know that it covers all the coatoms of $[v,w]$. Therefore, we may assume $k<m-1$.

We shall prove $(*)$ above by induction on $k$. When $k=1$, $\{i_1,m\}$ covers $\{i_1\}$ by definition. Suppose that $(*)$ holds for $k-1$ and $2\le k<m-1$.
We look at the interval $I$ between the empty set (that is $v$) and $\{i_1,\dots,i_k,m\}$. Since $k<m-1$, $I$ is a proper subset of $[v,w]$; so $I$ is Boolean and we know that $\{i_1,\dots,\widehat{i_j},\dots,i_k,m\}$ $(1\le j\le k)$ are all in $I$. Therefore, $I$ contains all subsets of $\{i_1,\dots,i_k,m\}$ except $\{i_1,\dots,i_k\}$ which follows from induction assumption and the fact that $I$ is Boolean. The only missing element in $I$ lies in $[v,q]$ and is of rank $k$, i.e., of the form $\{j_1,\dots,j_k\}$ where $\{j_1,\dots,j_k\}$ is a subset of $[m-1]$. Here, $\{j_1,\dots,j_k\}$ must contain all proper subsets of $\{i_1,\dots,i_k\}$ since those proper subsets are in $I$ and $I$ is Boolean. Therefore, $\{j_1,\dots,j_k\}=\{i_1,\dots,i_k\}$. This completes the induction step and the proof of the proposition.
\end{proof}

The following examples show that one cannot drop either \emph{toric} or \emph{Boolean} in Theorem~\ref{theo:3-6}.

\begin{example} \label{exam:3-1}
\begin{enumerate}
\item Let $v=1324$ and $w=4231$. Then $[v,w]$ is Boolean of length $4$ but since $\dim Q_{v,w}=3$, $Q_{v,w}$ is not toric. The vertices $v$ and $w$ have degree $4$, so $Q_{v,w}$ is not a cube.
\item Let $v=1324$ and $w=3412$. Then $v=s_2$, $w=vs_3s_1s_2=vs_1s_3s_2$, and $\dim Q_{v,w}=3$. Hence $Q_{v,w}$ is toric. But $[v,w]$ is a $4$-crown and not Boolean. The vertices $v$ and $w$ have degree $4$ and the others are simple vertices, so $Q_{v,w}$ is not a cube.

Similarly, if $v=2143$ and $w=4231$, then $v=s_1s_3=s_3s_1$, $w=vs_2s_3s_1=vs_2s_1s_3$, and $\dim Q_{v,w}=3$. Hence $Q_{v,w}$ is toric but $[v,w]$ is a $4$-crown and not Boolean.
\end{enumerate}
\end{example}

It was shown in~\cite[Theorem 3.5.2]{re02} that the largest rank of Boolean Bruhat intervals in $\mathfrak{S}_{n+1}$ is at least $n+\lfloor\frac{n-1}{2}\rfloor$ by finding a sufficient condition on $v$ and $w$ for $[v,w]$ to be Boolean. This implies that there are many Boolean Bruhat intervals which are not toric like  the Boolean interval $[1324,4231]$.

The following proposition geometrically means that a toric Richardson variety $X^v_w$ is smooth if it is smooth at either $vB$ or $wB$:

\begin{proposition} \label{prop:3-7}
If $Q_{v,w}$ is toric and either $v$ or $w$ is a simple vertex, then $Q_{v,w}$ is simple. \textup{(}Indeed, $Q_{v,w}$ is a cube by Proposition~\ref{prop:3-5}.\textup{)}
\end{proposition}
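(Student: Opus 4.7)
The plan is to argue by induction on $m:=\dim Q_{v,w}=\ell(w)-\ell(v)$; the base cases $m\leq 1$ are immediate. The inductive hypothesis is that the proposition holds for every toric Bruhat interval polytope of dimension less than $m$. Assume for definiteness that $v$ is simple; the case when $w$ is simple is entirely dual (atoms and coatoms swap roles, and $v$ and $w$ swap roles in the argument below).

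Since $Q_{v,w}$ is toric, Theorem~\ref{prop:3-2} identifies the edges of $Q_{v,w}$ at $v$ with the atoms of $[v,w]$ and the facets of $Q_{v,w}$ containing $v$ with the coatoms of $[v,w]$. Simplicity of $v$ forces both counts to equal $m$, so $[v,w]$ has exactly $m$ atoms $a_1,\dots,a_m$ and exactly $m$ coatoms $c_1,\dots,c_m$. The standard combinatorics of a simple polytopal vertex (each facet through it omits exactly one of the incident edges, producing a bijection between edges and facets) then permits a relabeling with
\[
a_j\leq c_i\iff i\neq j.
\]
In particular $w$ is also simple, since $d(w)$ equals the number of coatoms, namely $m$.

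Next apply the inductive hypothesis. For each coatom $c_j$, the face $Q_{v,c_j}$ of $Q_{v,w}$ is again toric, of dimension $m-1$, and $v$ has degree $m-1$ in it because the atoms of $[v,c_j]$ are exactly $\{a_i : i\neq j\}$; so by induction (combined with Theorem~\ref{theo:3-6}) the interval $[v,c_j]$ is Boolean of rank $m-1$. Dually, for each atom $a_i$ the face $Q_{a_i,w}$ is toric of dimension $m-1$ and $w$ has degree $m-1$ in it (its coatoms there are $\{c_j : j\neq i\}$), so $[a_i,w]$ is Boolean of rank $m-1$ by induction.

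Finally, take an arbitrary vertex $u\in[v,w]$. If $u\neq w$, choose $j$ with $u\leq c_j$; every cover $u'\lessdot u$ inside $[v,w]$ satisfies $u'\leq c_j$, so all such $u'$ lie in the Boolean interval $[v,c_j]$ and their number is $\ell(u)-\ell(v)$. If $u\neq v$, choose $i$ with $u\geq a_i$; every cover $u\lessdot u'$ inside $[v,w]$ satisfies $u'\geq a_i$, so all such $u'$ lie in the Boolean interval $[a_i,w]$ and their number is $\ell(w)-\ell(u)$. Adding the two contributions,
\[
d(u)=(\ell(u)-\ell(v))+(\ell(w)-\ell(u))=m,
\]
with the endpoint cases $u=v$ and $u=w$ covered separately above. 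Every vertex of $Q_{v,w}$ is therefore simple, and Proposition~\ref{prop:3-5} concludes that $Q_{v,w}$ is a cube. The main obstacle is producing the atom--coatom bijection from a single instance of polytopal simplicity; once that is in hand, the toric hypothesis supplies Boolean structure at both endpoints simultaneously and the inductive count at an interior vertex closes the argument.
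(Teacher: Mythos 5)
Your proof is correct. It follows the paper's overall strategy (induction on $m=\dim Q_{v,w}$, first upgrading simplicity of $v$ to simplicity of $w$, then counting covers at an interior vertex $u$ by applying the induction hypothesis to smaller toric faces with a simple endpoint), but the inductive decomposition is genuinely different. The paper splits at each interior vertex, applying the induction hypothesis to $Q_{v,u}$ and $Q_{u,w}$ — both of which inherit a simple endpoint directly, since a simple vertex of $Q_{v,w}$ is simple in every face containing it — and reads off that $u$ covers $\ell(u)-\ell(v)$ elements and is covered by $\ell(w)-\ell(u)$ elements. You instead split along the facets $Q_{v,c_j}$ and $Q_{a_i,w}$, which requires the extra combinatorial input that the vertex figure at a simple vertex is a simplex, yielding the complementary incidence $a_j\le c_i\iff i\neq j$; this is what guarantees that each $[a_i,w]$ has exactly $m-1$ coatoms so that the induction hypothesis applies on the $w$-side. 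You then route through Theorem~\ref{theo:3-6} to get Booleanness of these facet intervals before counting covers at $u$. Both arguments are sound and of comparable length; the paper's avoids the atom--coatom bijection and the detour through Booleanness, while yours has the small bonus of exhibiting the Boolean structure of all facets explicitly along the way. (There is no circularity in invoking Theorem~\ref{theo:3-6}, whose proof does not use Proposition~\ref{prop:3-7}.)
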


\begin{proof}
We set $m=\ell(w)-\ell(v)$ and prove the proposition by  induction on $m$. Suppose that the vertex $v$ is simple (the same argument works when the vertex $w$ is simple). We shall prove that $w$ is also a simple vertex.
Let $y$ be a coatom in $[v,w]$. Since $Q_{v,w}$ is toric, $Q_{v,y}$ is a facet by Theorem~\ref{prop:3-2}. Since $v$ is a simple vertex and $\dim Q_{v,w}=m$, there are exactly $m$ atoms and the direction vectors of the $m$ edges incident to $v$ are linearly independent. This means that $Q_{v,y}$ contains exactly $m-1$ edges incident to $v$ since $\dim Q_{v,y}=m-1$. Therefore, the number of coatoms in $[v,w]$ is at most $\binom{m}{m-1}=m$. However, since $\dim Q_{v,w}=m$, there must be at least $m$ coatoms. Therefore the number of coatoms in $[v,w]$ is $m$, which means that $w$ is a simple vertex.

Let $u\in (v,w)$. Since $Q_{v,w}$ is toric, both $Q_{v,u}$ and $Q_{u,w}$ are toric; so $\dim Q_{v,u}=\ell(u)-\ell(v)$ and $\dim Q_{u,w}=\ell(w)-\ell(u)$ and they are strictly less than $m$. As observed above, $v$ and $w$ are simple vertices of $Q_{v,w}$ and this means that they are also simple vertices of $Q_{v,u}$ and $Q_{u,w}$. Therefore one can apply the induction assumption to $Q_{v,u}$ and $Q_{u,w}$, so that they are both simple. These show that $u$ covers $\ell(u)-\ell(v)$ elements and is covered by $\ell(w)-\ell(u)$ elements. Hence $u$ is a simple vertex of $Q_{v,w}$, proving the proposition.
\end{proof}

	Example~\ref{exam:3-1}(2) shows that there is a toric Bruhat interval polytope $Q_{v,w}$ such that
	\[
	d(v)=d(w)=4>\dim Q_{v,w}=3.
	\]
	 Here is an example of a toric Bruhat interval polytope $Q_{v,w}$ with $d(v)\not=d(w)$.
\begin{example} \label{exam:3-2}
Let $v=13254$ and $w=35142$.  Since $v=s_2s_4$ and $w=vs_3s_4s_1s_2$, we get $\ell(w)-\ell(v)=4$. It follows from~\eqref{eq_dim_Qvw_Bvw} that
\[
\dim Q_{v,w}=5-\#B_{v,w}=4.
\]
Hence the polytope  $Q_{v,w}$ is toric. As we can see in Figure~\ref{fig:13254-35142}, there are six atoms and five coatoms. By Theorem~\ref{prop:3-2},  $d(v)=6$ and $d(w)=5$.
We note that $Q_{v,w}$ is not a cube.

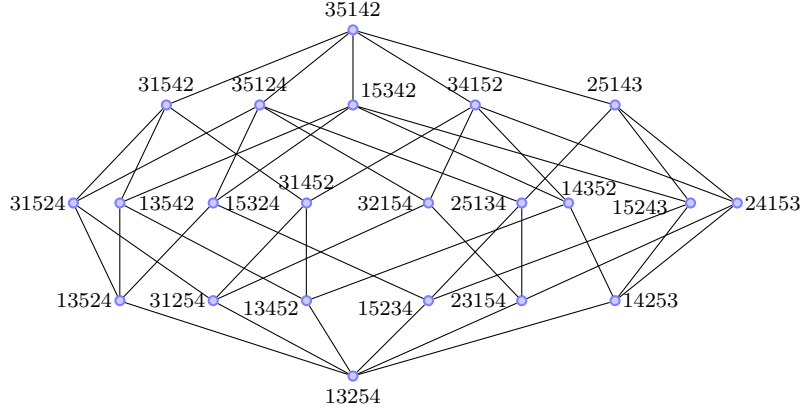
\begin{figure}[h!]
		\begin{tikzpicture}
\tikzset{every node/.style={font=\footnotesize}}
		\matrix [matrix of math nodes,column sep={0.62cm,between origins},
		row sep={1cm,between origins},
		nodes={circle, draw=blue!50,fill=blue!20, thick, inner sep = 0pt , minimum size=1.2mm}]
		{			
			&&&&&& \node[label = {[yshift=-0.2cm]above:{35142}}] (35142) {}; \\
			&& \node[label = {[yshift=-0.2cm]above:{31542}}] (31542) {} ;&&
				\node[label = {[yshift=-0.2cm]above:{35124}}] (35124) {}; &&
				\node[label = {[yshift=0.2cm]right:{15342}}] (15342) {} ;&&
				\node[label = {[yshift=-0.2cm]above:{34152}}] (34152) {}; &&&
				\node[label ={[yshift=-0.2cm]above:{25143}}] (25143) {}; \\	[2ex]
			\node[label = {left:{31524}}] (31524) {};
				& \node[label = {[xshift=0.15cm]right:{13542}}] (13542) {};
				&& \node[label = {[xshift = 0.05cm]right:{15324}}] (15324) {};
				&& \node[label = {[yshift=-0.2cm]above:{31452}}] (31452) {};
				&& \node[label = {[xshift = -0.1cm]left:{32154}}] (32154) {};
				&& \node[label = {[xshift = -0.1cm]left:{25134}}] (25134) {};
				& \node[label = {[yshift=0.2cm,xshift=-0.2cm]right:{14352}}] (14352) {};
				&& \node[label = {[yshift=-0.05cm, xshift=-0.2cm]left:{15243}}] (15243) {};
				& \node[label = {right:{24153}}] (24153) {}; \\ [2ex]
			& \node[label = {left:{13524}}] (13524) {};
				&&\node[label = {left:{31254}}] (31254) {};
				&& \node[label = {[yshift=-0.1cm]left:{13452}}] (13452) {};
				&& \node[label = {[yshift=-0.1cm, xshift=-0.1cm]left:{15234}}] (15234) {};
				&& \node[label = {[xshift=-0.1cm]left:{23154}}] (23154) {};
				&& \node[label = {right:{14253}}] (14253) {}; \\
			&&&&&& \node[label = {[yshift=0.2cm]below:{13254}}] (13254) {}; \\
		};
	
	\draw(13254)--(13524);
	\draw(13254)--(31254);
	\draw(13254)--(13452);
	\draw(13254)--(15234);
	\draw(13254)--(23154);
	\draw(13254)--(14253);
	
	\draw(13524)--(31524);
	\draw(13524)--(13542);
	\draw(13524)--(15324);
	
	\draw(31254)--(31524);
	\draw(31254)--(31452);
	\draw(31254)--(32154);
	
	\draw(13452)--(13542);
	\draw(13452)--(31452);
	\draw(13452)--(14352);
	
	\draw(15234)--(15324);
	\draw(15234)--(25134);
	\draw(15234)--(15243);
	
	\draw(23154)--(32154);
	\draw(23154)--(25134);
	\draw(23154)--(24153);
	
	\draw(14253)--(14352);
	\draw(14253)--(15243);
	\draw(14253)--(24153);
	
	\draw(31524)--(31542);
	\draw(31524)--(35124);
	
	\draw(13542)--(31542);
	\draw(13542)--(15342);
	
	\draw(15324)--(35124);
	\draw(15324)--(15342);
	
	\draw(31452)--(31542);
	\draw(31452)--(34152);
	
	\draw(32154)--(35124);
	\draw(32154)--(34152);
	
	\draw(25134)--(35124);
	\draw(25134)--(25143);
	
	\draw(14352)--(15342);
	\draw(14352)--(34152);
	
	\draw(15243)--(25143);
	\draw(15243)--(15342);
	
	\draw(24153)--(34152);
	\draw(24153)--(25143);
	
	\draw(31542)--(35142);
	\draw(35124)--(35142);
	\draw(15342)--(35142);
	\draw(34152)--(35142);
	\draw(25143)--(35142);
		\end{tikzpicture}
		\caption{Bruhat interval $[13254, 35142]$.}\label{fig:13254-35142}
\end{figure}
\end{example}

We note that if $Q_{v,w}$ is a cube, then $[v,w]$ is Boolean and $Q_{v,w}$ is toric by Theorem~\ref{theo:3-6} and $Q_{v,w}$ is obviously simple.
\begin{corollary} \label{coro:3-8}
Any two of the following three statements imply that $Q_{v,w}$ is a cube and hence imply the remaining one:
\begin{enumerate}
\item $[v,w]$ is Boolean.
\item $Q_{v,w}$ is toric.
\item Either $v$ or $w$ is a simple vertex of $Q_{v,w}$.
\end{enumerate}
\end{corollary}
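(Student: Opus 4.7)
The plan is to establish the equivalence ``$Q_{v,w}$ is a cube $\iff$ any two of (1), (2), (3) hold''; once this is in hand, the third statement follows automatically because a cube is a simple polytope (giving~(3)) and Theorem~\ref{theo:3-6} delivers both (1) and (2). For the three implications ``any two $\Rightarrow$ cube'' I would handle the pairs separately. The implication (1)+(2)$\Rightarrow$cube is exactly Theorem~\ref{theo:3-6}, and (2)+(3)$\Rightarrow$cube is settled by first invoking Proposition~\ref{prop:3-7} to conclude that $Q_{v,w}$ is simple and then upgrading to a cube via Proposition~\ref{prop:3-5}.

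The substantive case is (1)+(3)$\Rightarrow$cube. My strategy is to reduce it to the already-handled (1)+(2) by showing that (1)+(3)$\Rightarrow$(2). Assume without loss of generality that $v$ is the simple vertex. Since $[v,w]$ is Boolean of rank $m=\ell(w)-\ell(v)$, exactly $m$ elements cover $v$ in $[v,w]$, so by construction $G^{v,w}_v$ carries exactly $m$ directed edges. If I can argue that $d(v)=m$, then the simplicity of $v$ forces $\dim Q_{v,w}=d(v)=m$, so $Q_{v,w}$ is toric, and Theorem~\ref{theo:3-6} concludes the proof.

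The key auxiliary lemma is therefore $G^{v,w}_v=\widetilde{G}^{v,w}_v$, for then $d(v)=|E(\widetilde{G}^{v,w}_v)|=|E(G^{v,w}_v)|=m$ by Corollary~\ref{cor:edge_vectors}. I would prove this via the following \emph{no-shortcut} observation on Bruhat cover relations in $\mathfrak{S}_n$. Suppose there is a directed path $i_0\to i_1\to\cdots\to i_k$ of length $k\ge 2$ in $G^{v,w}_v$; each edge $i_s\to i_{s+1}$ records a cover $v\lessdot v t_{i_s,i_{s+1}}$ with $i_s<i_{s+1}$, and the definition of a Bruhat cover forces $v(i_s)<v(i_{s+1})$. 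Chaining these inequalities yields $v(i_1)\in(v(i_0),v(i_k))$ with $i_1\in(i_0,i_k)$. If the shortcut edge $i_0\to i_k$ were also present in $G^{v,w}_v$, the cover criterion for $v\lessdot v t_{i_0,i_k}$ would demand $v(\ell)\notin(v(i_0),v(i_k))$ for every $\ell\in(i_0,i_k)$, contradicting the above at $\ell=i_1$. Hence $G^{v,w}_v$ contains no redundant edge, proving the key lemma. The symmetric case in which $w$ is simple is handled by the dual argument applied to $G^{v,w}_w$, whose directed edges encode the downward covers at $w$. This no-shortcut step is the main obstacle; everything else is routine assembly of results from the current section.
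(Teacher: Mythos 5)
Your proof is correct and takes essentially the same route as the paper's: both reduce everything to showing that (1)+(3) imply (2) via the degree count $d(v)=\ell(w)-\ell(v)$ at the simple vertex, after dispatching the other two pairs with Theorem~\ref{theo:3-6} and Propositions~\ref{prop:3-7} and~\ref{prop:3-5}. Your ``no-shortcut'' lemma (that $G^{v,w}_v$ is already transitively reduced, so the number of polytope edges at $v$ equals the number of atoms) is a correct justification of the step the paper merely asserts, namely that Booleanness forces $d(v)=d(w)=\ell(w)-\ell(v)$; in that respect your write-up is slightly more complete than the paper's.
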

\begin{proof}
By Theorem~\ref{theo:3-6} and Proposition~\ref{prop:3-7}, it suffices to show that (1) and (3) imply (2).  Suppose that (1) and (3) hold.  Since $[v,w]$ is Boolean, the numbers of edges at $v$ and $w$ are equal to $\ell(w)-\ell(v)$.  On the other hand, since either $v$ or $w$ is a simple vertex of $Q_{v,w}$, the numbers of edges at $v$ and $w$ are equal to $\dim Q_{v,w}$.  Therefore $\dim Q_{v,w}=\ell(w)-\ell(v)$, that is, $Q_{v,w}$ is toric.
\end{proof}

\section{Product of Bruhat intervals}\label{sec:Product of Bruhat intervals}
In this section, we will show that there are infinitely many non-simple toric Bruhat interval polytopes (see~Proposition~\ref{prop:4-3}).

Let $r$ be a non-negative integer. To a pair $(x,y)\in \mathfrak{S}_p\times \mathfrak{S}_q$, we associate an element in $\mathfrak{S}_{p+q+r-1}$, denoted by $x*_r y$, as follows: express $x=s_{i_1}\cdots s_{i_k}\in \mathfrak{S}_p$, $y=s_{j_1}\cdots s_{j_\ell}\in\mathfrak{S}_q$, and define
\begin{equation} \label{eq:4-1}
x*_r y:=(s_{i_1}\cdots s_{i_k})(s_{j_1+p+r-1}\cdots s_{j_\ell+p+r-1})\in \mathfrak{S}_{p+q+r-1}.
\end{equation}
Since $i_1,\dots,i_k$ are less than or equal to $p-1$ while $j_1+p+r-1,\dots, j_\ell+p+r-1$ are greater than or equal to $p$, $x*_ry$ is well-defined, {that is}, independent of the expressions of~$x$ and~$y$ above. The expressions of~$x$ and~$y$ need not be reduced but if they are reduced, then the resulting expression of $x*_ry$ in \eqref{eq:4-1} is also reduced and hence
\begin{equation} \label{eq:4-2}
\ell(x*_ry)=\ell(x)+\ell(y).
\end{equation}
The following lemma would be obvious.

\begin{lemma} \label{lemm:4-1}
Let $x,x'\in \mathfrak{S}_p$ and $y,y'\in\mathfrak{S}_q$. Then
\begin{enumerate}
\item $x'*_r y'\le x*_r y$ if and only if $x'\le x$ and $y'\le y$,
\item $x'*_ry'\lessdot x*_ry$ if and only if $x'\lessdot x$ and $y'=y$ or $x'=x$ and $y'\lessdot y$.
\end{enumerate}
Moreover, if $z\le x*_r y$, then $z=x'*_r y'$ for some $x'\le x$ and $y'\le y$.
\end{lemma}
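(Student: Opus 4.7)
The plan is to derive everything from the subword characterization of the Bruhat order together with the length identity~\eqref{eq:4-2}. I will establish the \emph{moreover} statement first, then derive both directions of~(1), and finally deduce~(2) from~(1) using that length grades the Bruhat order.

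For the moreover claim, fix reduced expressions $x=s_{i_1}\cdots s_{i_k}$ and $y=s_{j_1}\cdots s_{j_\ell}$. By~\eqref{eq:4-2}, the concatenation
\[
s_{i_1}\cdots s_{i_k}\,s_{j_1+p+r-1}\cdots s_{j_\ell+p+r-1}
\]
is a reduced expression for $x *_r y$, so by the subword criterion some subword of it is a reduced expression for $z$. Being a subword of a concatenation, it decomposes as an initial block of length $k'$ drawn from the first $k$ letters followed by a final block of length $\ell'$ drawn from the last $\ell$ letters. Let $x'\in\mathfrak{S}_p$ be the product of the first block and let $y'\in\mathfrak{S}_q$ be the element whose shifted image is the product of the second block, so that $z=x' *_r y'$. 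Then~\eqref{eq:4-2} gives
\[
\ell(z)=\ell(x')+\ell(y')\le k'+\ell'=\ell(z),
\]
forcing equality throughout and showing that each block is itself a reduced expression (for $x'$, respectively $y'$). The subword criterion applied within $\mathfrak{S}_p$ and $\mathfrak{S}_q$ then yields $x'\le x$ and $y'\le y$.

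The \lq\lq only if\rq\rq\ part of~(1) follows from the moreover claim combined with the injectivity of $(u,v)\mapsto u *_r v$: the subgroup of $\mathfrak{S}_{p+q+r-1}$ generated by $\{s_1,\dots,s_{p-1}\}$ acts on $\{1,\dots,p\}$ (fixing the complement) while the subgroup generated by $\{s_{p+r},\dots,s_{p+q+r-2}\}$ acts on $\{p+r,\dots,p+q+r-1\}$; their supports overlap in at most one point, so these two subgroups intersect trivially, and from $u\cdot v_{\mathrm{shift}}=u'\cdot v'_{\mathrm{shift}}$ one reads off $u=u'$ and $v=v'$. The \lq\lq if\rq\rq\ direction is the reverse construction: given $x'\le x$ and $y'\le y$, pick reduced expressions of $x$ and $y$ containing reduced expressions of $x'$ and $y'$ as subwords, and concatenate as in~\eqref{eq:4-1} to produce a reduced subword of a reduced expression of $x *_r y$ realizing $x' *_r y'$.

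For~(2), a cover $x' *_r y'\lessdot x *_r y$ satisfies $x'\le x$ and $y'\le y$ by~(1), and $(\ell(x)-\ell(x'))+(\ell(y)-\ell(y'))=1$ with both summands non-negative, so exactly one equals~$1$; since length grades the Bruhat order, this yields the claimed dichotomy. The converse is immediate from~(1) and~\eqref{eq:4-2}. The main technical point is the length accounting that forces each block-subword in the moreover proof to be reduced; notably no rearrangement of the subword is required, which matters because when $r=0$ the $x$-block and $y$-block letters of~\eqref{eq:4-1} contain the adjacent pair $s_{p-1},s_p$ and so need not commute.
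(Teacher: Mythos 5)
Your argument is correct and complete. Note that the paper itself offers no proof of this lemma at all --- it simply declares that it ``would be obvious'' --- so there is no argument of the authors' to compare against; what you have written is a careful verification of exactly the kind the authors left implicit. Your route is the natural one: the subword property of Bruhat order applied to the concatenated reduced expression, with the length identity~\eqref{eq:4-2} forcing each block of the subword to be reduced, which is precisely the step that makes the \emph{moreover} clause (and hence the ``only if'' of~(1)) go through. The injectivity argument via the trivial intersection of the two parabolic-type subgroups is also needed and correctly handled, including the boundary case $r=0$ where the supports $\{1,\dots,p\}$ and $\{p,\dots,p+q-1\}$ share the point $p$; your closing remark that no commutation of letters is ever invoked is exactly the subtlety one should flag there, since for $r=0$ the letters $s_{p-1}$ and $s_p$ do not commute. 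The deduction of~(2) from~(1) by length grading is standard and correct.
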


Suppose that $x'\le x$ and $y'\le y$. Then it follows from Lemma~\ref{lemm:4-1} that
\[
[x'*_ry',x*_ry]=\{ a*_rb\mid x'\le a\le x,\ y'\le b\le y\}.
\]
One can also see that
\begin{equation} \label{eq:4-3}
\dim Q_{x'*_r y',x*_ry}=\dim Q_{x',y'}+\dim Q_{x,y}
\end{equation}
using Theorem 4.6 in \cite{ts-wi15}.
Therefore, we have

\begin{corollary} \label{coro:4-2}
The poset structure of $[x'*_ry',x*_ry]$ is independent of $r$ \textup{(}$r\ge 0$\textup{)}. Moreover, $[x'*_ry',x*_ry]$ is Boolean if and only if both $[x',x]$ and $[y',y]$ are Boolean, and $Q_{x'*_ry',x*_ry}$ is toric if and only if both $Q_{x',x}$ and $Q_{y',y}$ are toric.
\end{corollary}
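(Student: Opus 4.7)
The plan is to exhibit a poset isomorphism $[x'*_ry', x*_ry]\cong [x',x]\times [y',y]$ that is manifestly independent of $r$, and then read off all three conclusions from this identification together with equations \eqref{eq:4-2} and \eqref{eq:4-3}.

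First, by Lemma~\ref{lemm:4-1}, every element $z\in [x'*_ry', x*_ry]$ can be written uniquely as $z=a*_rb$ with $(a,b)\in [x',x]\times[y',y]$, and the order relation satisfies
\[
a'*_rb'\le a*_rb \iff a'\le a \text{ and } b'\le b.
\]
Hence $(a,b)\mapsto a*_rb$ is a bijection, and the right-hand criterion has no $r$-dependence. This proves that the poset structure of $[x'*_ry',x*_ry]$ is independent of $r$, and in fact identifies it with the product poset $[x',x]\times[y',y]$.

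Second, for the Boolean statement, recall that a finite poset with a least and greatest element is Boolean if and only if it is isomorphic to a Boolean lattice $2^{[m]}$. The product $2^{[m]}\times 2^{[n]}\cong 2^{[m+n]}$ is again Boolean, and conversely if $[x',x]\times[y',y]$ is Boolean then each factor, being an interval in it (taking the other factor to be trivial at its bottom or top), is Boolean. Combining with the isomorphism above proves the Boolean claim.

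Third, for the toric claim, apply \eqref{eq:4-2} to obtain
\[
\ell(x*_ry)-\ell(x'*_ry') = \bigl(\ell(x)-\ell(x')\bigr) + \bigl(\ell(y)-\ell(y')\bigr),
\]
and apply \eqref{eq:4-3} to obtain $\dim Q_{x'*_ry',x*_ry}=\dim Q_{x',x}+\dim Q_{y',y}$. By the general inequality $\dim Q_{v,w}\le \ell(w)-\ell(v)$ recorded in \eqref{eq_dim_Qvw_and_length}, each of the two summands on the right-hand side of the dimension equation is bounded above by the corresponding summand in the length equation. Therefore equality in the sum forces equality in each summand, so $Q_{x'*_ry',x*_ry}$ is toric if and only if both $Q_{x',x}$ and $Q_{y',y}$ are toric. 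There is no genuine obstacle here: once the $r$-independent product structure is established, everything else is an immediate consequence of \eqref{eq:4-2}, \eqref{eq:4-3} and the standard dimension bound.
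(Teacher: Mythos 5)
Your proof is correct and follows essentially the same route as the paper: Lemma~\ref{lemm:4-1} identifies $[x'*_ry',x*_ry]$ with the product poset $[x',x]\times[y',y]$ independently of $r$, and the toric statement then follows from \eqref{eq:4-2}, \eqref{eq:4-3} and the bound $\dim Q_{v,w}\le\ell(w)-\ell(v)$, exactly as in the discussion the paper gives just before the corollary. (You also correctly read \eqref{eq:4-3} as $\dim Q_{x',x}+\dim Q_{y',y}$, fixing an evident typo in the paper's subscripts.)
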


We set
\[
c(v,w):=\ell(w)-\ell(v)-\dim Q_{v,w}
\]
and call it the \emph{complexity} of the interval $[v,w]$ because it is the complexity of the torus action on the Richardson variety $X^v_w$.
By~\eqref{eq_dim_Qvw_and_length}, we get $c(v,w)\ge 0$ and the equality holds when $Q_{v,w}$ is toric by definition.  It follows from \eqref{eq:4-2} and \eqref{eq:4-3} that
\begin{equation*}
c(x'*_ry',x*_ry)=c(x',y')+c(x,y).
\end{equation*}

\begin{example}\label{exam:6-3}
\begin{enumerate}
	\item For $v = 1324$ and $w = 4231$ in Example~\ref{exam:3-1}(1), we get that
	\[
	c(v,w) = \ell(w)- \ell(v) - \dim Q_{v,w} = 5 - 1 - 3 = 1.
	\]
	Moreover, $[v,w]$ is a Boolean interval of length $4$.
	\item For $v = 1324$ and $w = 3412$ in Example~\ref{exam:3-1}(2), the Bruhat interval polytope $Q_{v,w}$ is toric of dimension~$3$, and we have that
	\[
	d(v) = d(w) = 4 = \dim Q_{v,w} + 1.
	\]
	\item For $v = 13254$ and $w = 35142$ in~Example~\ref{exam:3-2}, the Bruhat interval polytope $Q_{v,w}$ is toric of dimension~$4$, and we see that
	\[
	|d(v) - d(w)| = |6 -5| = 1.
	\]
\end{enumerate}
\end{example}

The following implies that there are infinitely many toric singular Bruhat interval polytopes.

\begin{proposition} \label{prop:4-3}
For any non-negative integer $k$,
\begin{enumerate}
\item there is a Boolean interval $[v,w]$ with $c(v,w)=k$,
\item there is a toric Bruhat interval polytope $Q_{v,w}$ such that $d(v)=d(w)=\dim Q_{v,w}+k$, and
\item there is a toric Bruhat interval polytope $Q_{v,w}$ with $|d(v)-d(w)|=k$.
\end{enumerate}
\end{proposition}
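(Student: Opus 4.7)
The plan is to iterate the $*_r$ construction of this section on the three base examples gathered in Example~\ref{exam:6-3}, exploiting the fact that length, dimension, complexity, and, in the toric setting, the vertex degrees at the endpoints are all additive under $*_r$.

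First I would record the one additivity statement not already in the excerpt, namely additivity of endpoint degrees in the toric case. Concretely, if $Q_{x',x}$ and $Q_{y',y}$ are toric, then $Q_{x'*_r y',x*_r y}$ is also toric by Corollary~\ref{coro:4-2}, and I claim
\[
d(x'*_r y')=d(x')+d(y'),\qquad d(x*_r y)=d(x)+d(y).
\]
This follows by combining Theorem~\ref{prop:3-2}---which forces every cover in the interval $[v,w]$ of a toric Bruhat interval polytope to be realized as an edge at the corresponding vertex, so that edges at the bottom (resp. top) vertex are in bijection with atoms (resp. coatoms)---with the description of covers in $[x'*_r y',x*_r y]$ supplied by Lemma~\ref{lemm:4-1}(2), which says covers split as the disjoint union of covers from the two factors.

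Once this lemma is established, each of the three parts is a short bookkeeping argument based on the iterated $*_r$-product of a suitable seed. For part~(1), take the $k$-fold iterated $*_r$-product of the Boolean interval $[1324,4231]$ (Example~\ref{exam:6-3}(1)), which has complexity~$1$; by Corollary~\ref{coro:4-2} the result is Boolean, and complexities add to $k$. For part~(2), take the $k$-fold iterated $*_r$-product of $[1324,3412]$ (Example~\ref{exam:6-3}(2)), which is toric of dimension~$3$ with $d(v)=d(w)=4$; the iterate is toric of dimension $3k$ with $d(v)=d(w)=4k=\dim Q_{v,w}+k$. For part~(3), take the $k$-fold iterated $*_r$-product of $[13254,35142]$ (Example~\ref{exam:6-3}(3)), which is toric with $d(v)-d(w)=1$; by degree additivity the iterate has $d(v)-d(w)=k$, hence $|d(v)-d(w)|=k$. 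The case $k=0$ is handled trivially by taking $v=w$.

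The only genuinely nontrivial ingredient I foresee is the endpoint-degree additivity. Everything else is immediate arithmetic on top of Corollary~\ref{coro:4-2} and the additivity of $\ell$ and $\dim Q$ recorded in \eqref{eq:4-2} and \eqref{eq:4-3}.
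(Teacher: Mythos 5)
Your proposal is correct and follows essentially the same route as the paper: iterate the $*_r$-product on the three seeds of Example~\ref{exam:6-3}, using additivity of length, dimension, complexity, and (in the toric case) the endpoint degrees. The paper simply asserts the degree additivity without proof, whereas you justify it via Theorem~\ref{prop:3-2} and Lemma~\ref{lemm:4-1}(2); that justification is sound and matches how the paper computes $d(v)$ and $d(w)$ elsewhere (e.g.\ Example~\ref{exam:3-2}).
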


\begin{proof}
The case $k=0$ is realized by a cube $Q_{v,w}$, so we may assume $k\ge 1$.
The complexity $c(v,w)$, the degrees $d(v)$ and $d(w)$, and $\dim Q_{v,w}$ behave additively with respect to the product $*_r$ of a copy of $[v,w]$, so each statement respectively follows from {Example~\ref{exam:6-3}.}
\end{proof}

\section{Conditions on $v$ and $w$ for $Q_{v,w}$ to be toric}\label{sec:Conditions on v and w}
In this section, we first find some sufficient conditions on $v$ and $w$ for $Q_{v,w}$ to be toric, and then find a sufficient condition for such a toric Bruhat interval polytope $Q_{v,w}$ to be a cube.

It was shown in~\cite[{\S 5 and \S 6}]{ha-ho-ma-pa18} that $Q_{v,w}$ is toric (in fact, a cube) if $v=[a_1,\dots,a_{n-1},n]$ and $w=[n,a_1,\dots,a_{n-1}]$ or $v=[1,b_2,\dots,b_n]$ and $w=[b_2,\dots,b_n,1]$. In these cases,
\[
\begin{split}
&w=vs_{n-1}s_{n-2}\cdots s_1 \quad\text{and}\quad \ell(w)-\ell(v)=n-1,\\
&w=vs_1s_2\cdots s_{n-1} \quad\text{and}\quad \ell(w)-\ell(v)=n-1.
\end{split}
\]
These examples motivate us to study the following case:
\begin{equation} \label{eq:6-1}
\text{$w=vs_{j_1}s_{j_2}\cdots s_{j_m}$ where $\ell(w)-\ell(v)=m$ and $j_1,\dots,j_m$ are distinct.}
\end{equation}

\begin{proposition} \label{prop:6-1}
Suppose that $w=vs_{j_1}s_{j_2}\cdots s_{j_m}$  or  $w=s_{j_1}s_{j_2}\cdots s_{j_m}v$ with $\ell(w)-\ell(v)=m$. Then $j_1,\dots,j_m$ are distinct if and only if $Q_{v,w}$ is toric.
\end{proposition}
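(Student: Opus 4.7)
The plan is to reduce both directions of the equivalence to a single direct computation of $\dim Q_{v,w}$ via the partition $B_{v,w}$ introduced in Section~\ref{sec:Properties of Bruhat interval polytopes}. By \eqref{eq_dim_Qvw_Bvw}, $\dim Q_{v,w}=n-\#B_{v,w}$, where $B_{v,w}=B(G^{\mathcal C})$ for any maximal chain $\mathcal{C}$ in $[v,w]$, so $Q_{v,w}$ is toric precisely when the number of connected components of $G^{\mathcal C}$ is $n-m$ for $m=\ell(w)-\ell(v)$. The strategy is to pick the chain coming from the given product and read off the edges.

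Consider first the case $w=vs_{j_1}\cdots s_{j_m}$. Since $\ell(w)-\ell(v)=m$, each multiplication by a simple transposition must increase the length by one, so setting $x_{(k)}:=vs_{j_1}\cdots s_{j_k}$ produces a maximal chain
\[
\mathcal{C}\colon v=x_{(0)}\lessdot x_{(1)}\lessdot\cdots\lessdot x_{(m)}=w.
\]
A direct computation gives
\[
x_{(k-1)}^{-1}x_{(k)}=s_{j_{k-1}}\cdots s_{j_1}\cdot s_{j_1}\cdots s_{j_k},
\]
which collapses by the telescoping cancellations $s_{j_i}s_{j_i}=e$ from the middle outward to $s_{j_k}=(j_k,j_k+1)$. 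Hence the edge multiset of $G^{\mathcal C}$ is $\{\{j_k,j_k+1\}\mid 1\le k\le m\}$, a submultiset of the edge set of the path graph $1\text{--}2\text{--}\cdots\text{--}n$.

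Forgetting multiplicities, the underlying simple graph is therefore a subforest of the path graph. If $r:=|\{j_1,\dots,j_m\}|$ is the number of distinct indices, then this subforest has exactly $r$ edges and thus $n-r$ connected components. Consequently $\dim Q_{v,w}=n-(n-r)=r$, which equals $m$ if and only if the indices $j_1,\dots,j_m$ are pairwise distinct, proving the equivalence in this case.

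The case $w=s_{j_1}\cdots s_{j_m}v$ reduces to the previous one by inversion: $w^{-1}=v^{-1}s_{j_m}\cdots s_{j_1}$ with $\ell(w^{-1})-\ell(v^{-1})=m$, and Proposition~\ref{lem:dim} says $Q_{v,w}$ is toric iff $Q_{v^{-1},w^{-1}}$ is toric. Since distinctness of $j_m,\dots,j_1$ is the same as distinctness of $j_1,\dots,j_m$, applying the first case to $v^{-1}$ and $w^{-1}$ finishes the proof. There is no serious obstacle here: the entire argument rests on the observation that multiplying by simple transpositions produces only \emph{adjacent-pair} edges $\{j,j+1\}$ in $G^{\mathcal C}$, so that connectivity is determined by a trivial forest-counting argument on a path graph; the only item requiring a moment of care is the telescoping identification $x_{(k-1)}^{-1}x_{(k)}=s_{j_k}$.
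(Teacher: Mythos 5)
Your proposal is correct and follows essentially the same route as the paper: both pick the maximal chain $x_{(k)}=vs_{j_1}\cdots s_{j_k}$, observe that $x_{(k-1)}^{-1}x_{(k)}=s_{j_k}$ so that $G^{\mathcal C}$ has only adjacent-pair edges $\{j_k,j_k+1\}$, and conclude via $\dim Q_{v,w}=n-\#B(G^{\mathcal C})$ that the dimension equals the number of distinct indices; the second case is reduced to the first by taking inverses exactly as in the paper. Your explicit telescoping computation and exact count $\dim Q_{v,w}=r$ are slightly more detailed than the paper's inequality-plus-equality-criterion phrasing, but the argument is the same.
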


\begin{proof}
Since $w=s_{j_1}s_{j_2}\cdots s_{j_m}v$ means $w^{-1}=v^{-1}s_{j_1}s_{j_2}\cdots s_{j_m}$, it is enough to prove when $w=vs_{j_1}s_{j_2}\cdots s_{j_m}$ by Corollary~\ref{coro:combi-equiv}. Since $m=\ell(w)-\ell(v)$, we have $\ell(vs_{j_1}s_{j_2}\cdots s_{j_k})=\ell(v)+k$ for any $1\le k\le m$. This means that
\[
v\lessdot vs_{j_1} \lessdot vs_{j_1}s_{j_2}\lessdot \dots \lessdot vs_{j_1}s_{j_2}\cdots s_{j_m}=w
\]
is a maximal chain from $v$ to $w$, say $\mathcal{C}$.
Then it defines the graph $G^{\mathcal{C}}$ whose edge set is given by $\{\{{j_1},{j_1+1}\},\dots,\{{j_m},{j_m+1}\}\}$ (see~Section~\ref{sec:Properties of Bruhat interval polytopes}). Hence the number of connected components of~$G^\mathcal{C}$ is greater than or equal to $n-m$. Thus the dimension of $Q_{v,w}$ is less than or equal to $m$ by~\eqref{eq_dim_Qvw_Bvw}.
Notice that $G^\mathcal{C}$ has exactly $n-m$ components if and only if $j_1, j_2,\dots,j_m$ are distinct. Hence $Q_{v,w}$ is toric if and only if $j_1, j_2,\dots,j_m$ are distinct.
\end{proof}

Example~\ref{exam:3-1}(2) ($w=vs_3s_1s_2=vs_1s_3s_2$ where $v=s_2$) and Example~\ref{exam:3-2} ($w=vs_3s_4s_1s_2$ where $v=s_2s_4$) show that we cannot conclude that $Q_{v,w}$ is a cube in Proposition~\ref{prop:6-1}. We shall give a sufficient condition on $v$ and $w$ for $Q_{v,w}$ to be a cube. For that we prepare some notations.
For $p$ {and} $q$ {in} $[n-1]$, we set
\[
s(p,q)=\begin{cases} s_ps_{p+1}\cdots s_q \quad&\text{when $p\le q$},\\
s_ps_{p-1}\cdots s_q \quad &\text{when $p\ge q$}.
\end{cases}
\]
For each $s(p,q)$, we also set
\[
\bar{p}=\min\{p,q\},\quad \bar{q}=\max\{p,q\}.
\]
We note that if $j_1,\dots,j_m\in [n]$ are distinct, then we have a \emph{minimal} expression
\begin{equation} \label{eq:6-2}
s_{j_1}s_{j_2}\cdots s_{j_m}=s(p_1,q_1)s(p_2,q_2)\cdots s(p_r,q_r)
\end{equation}
where the intervals $[\bar{p}_1,\bar{q}_1],\dots, [\bar{p}_r,\bar{q}_r]$ are disjoint and $r$ is the minimum among such expressions.

\begin{example} \label{exam:6-1}
Here are examples of minimal expressions.
\begin{enumerate}
\item $s_1s_2\cdots s_{n-1}=s(1,n-1)$,\quad $s_{n-1}s_{n-2}\cdots s_1=s(n-1,1)$.

\item $s_1s_3s_8s_2s_4s_7s_6=s_3s_4s_1s_2s_8s_7s_6=s(3,4)s(1,2)s(8,6)$.

\item $s_2s_8s_4s_7s_1s_6=s_2s_1s_4s_8s_7s_6=s(2,1)s(4,4)s(8,6)$.
\end{enumerate}
\end{example}

We say that the product $s_{j_1}s_{j_2}\cdots s_{j_m}$ in \eqref{eq:6-2} is \emph{proper} if no two intervals among $[\bar{p}_1,\bar{q}_1],\dots, [\bar{p}_r,\bar{q}_r]$ are adjacent, in other words, the cycles defined by $s(p_1,q_1)$,$\ldots$, $s(p_r,q_r)$ are disjoint. For instance, (1) and (3) in Example~\ref{exam:6-1} are proper but (2) is not because the intervals $[3,4]$ and $[1,2]$ are adjacent.

\begin{proposition} \label{prop:6-2}
Suppose that $s_{j_1}s_{j_2}\cdots s_{j_m}$ is a proper minimal expression.
If $w=vs_{j_1}s_{j_2}\cdots s_{j_m}$ or $w=s_{j_1}s_{j_2}\cdots s_{j_m}v$ with $\ell(w)-\ell(v)=m$, then the Bruhat interval polytope $Q_{v,w}$ is a cube.
\end{proposition}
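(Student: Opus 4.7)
The plan is to invoke Theorem~\ref{theo:3-6}, which reduces the problem to verifying that $Q_{v,w}$ is toric and that $[v,w]$ is Boolean. By Corollary~\ref{coro:cube-equiv}, together with the identity $(s_{j_1}\cdots s_{j_m}v)^{-1}=v^{-1}s_{j_m}\cdots s_{j_1}$, I may restrict to the case $w=vs_{j_1}\cdots s_{j_m}$. Since the $j_k$'s are distinct, Proposition~\ref{prop:6-1} already gives that $Q_{v,w}$ is toric, so the remaining task is to show that $[v,w]$ is a Boolean algebra of rank $m$.

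I would proceed by induction on the number $r$ of blocks in the proper minimal expression $s_{j_1}\cdots s_{j_m}=s(p_1,q_1)\cdots s(p_r,q_r)$. The base case $r=1$ corresponds to $w=v\cdot s(p,q)$: here the chain $v\lessdot vs_p\lessdot vs_ps_{p+1}\lessdot\cdots\lessdot w$ is reduced of length $|q-p|+1$, and combining it with the diamond property of Bruhat order one enumerates exactly $2^{|q-p|+1}$ elements of $[v,w]$ and verifies that they form a Boolean algebra (extending the specific calculations of~\cite{ha-ho-ma-pa18} to an arbitrary starting element~$v$). For the inductive step $r\ge 2$, the properness condition forces the index intervals $[\bar p_i,\bar q_i]$ to be pairwise separated by a gap, so the blocks $s(p_i,q_i)$ commute as elements of $\mathfrak{S}_n$ and act on disjoint supports. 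Setting $v'=v\cdot s(p_1,q_1)\cdots s(p_{r-1},q_{r-1})$, the inductive hypothesis gives $[v,v']$ Boolean of rank $m-(|q_r-p_r|+1)$, and the base case gives $[v',w]$ Boolean of rank $|q_r-p_r|+1$. I would then establish a poset isomorphism $[v,w]\cong[v,v']\times[v',w]$, whence $[v,w]$ is Boolean of rank~$m$, and Theorem~\ref{theo:3-6} delivers the cube conclusion.

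The main obstacle is this poset decomposition in the inductive step. The permutation-level commutativity of the blocks strongly suggests the factorization, but converting it into a genuine isomorphism of Bruhat intervals requires showing that every $u\in[v,w]$ admits a unique expression $u=u_1\cdot(v')^{-1}u_2$ with $u_1\in[v,v']$ and $u_2\in[v',w]$, and that cover relations in $[v,w]$ transfer faithfully into cover relations in one factor at a time. I would approach this through the subword criterion for Bruhat order, arguing that any reduced expression of an element of $[v,w]$ can be reorganized so that the contributions of the first $r-1$ blocks and of the last block appear in separate segments, with the non-adjacent supports forbidding any ``mixing'' or cancellation between the two.
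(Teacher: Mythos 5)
Your reduction to $w=vs_{j_1}\cdots s_{j_m}$ via Corollary~\ref{coro:cube-equiv} and your use of Proposition~\ref{prop:6-1} for toricness match the paper, but from there you take a genuinely different route: you aim at Theorem~\ref{theo:3-6} (toric $+$ Boolean $\Rightarrow$ cube), whereas the paper aims at Proposition~\ref{prop:3-7} (toric $+$ one simple vertex $\Rightarrow$ cube) and therefore only has to count the elements covering $v$. That is a much lighter burden: the paper shows any atom of $[v,w]$ is $vt$ for a transposition $t$ whose support lies in a single block $[\bar p_i,\bar q_i+1]$ of the partition $B_{v,w}$ (this is where properness enters), reduces each block to the single-cycle case of \cite{ha-ho-ma-pa18}, and sums to get exactly $m$ atoms. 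Your base case $r=1$ is fine modulo citing that same result, but your inductive step rests entirely on the poset isomorphism $[v,w]\cong[v,v']\times[v',w]$, which you correctly identify as the main obstacle and do not prove.

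That decomposition is the genuine gap, and the plan you sketch for it would not go through as stated. The subword criterion controls the upper bound $u\le w$, but the two-sided condition $v\le u$ is not a condition on the suffix $s_{j_1}\cdots s_{j_m}$ alone: a reduced expression of $v$ inside a reduced expression of $u$ need not occupy the $r(v)$-segment of $r(v)s(p_1,q_1)\cdots s(p_r,q_r)$, and left multiplication by $v$ is not order-preserving, so "separating the contributions of the blocks" at the level of reduced words does not by itself show that every $u\in[v,w]$ factors uniquely as $v\tau_1\tau_2$ with $v\tau_1\in[v,v']$ and $v'\tau_2\in[v',w]$, nor that covers transfer one factor at a time. (The decomposition is true, but only a posteriori, once one knows $[v,w]$ is Boolean --- which is what you are trying to prove.) Two ways to close the gap: (i) prove the factorization using the paper's partition machinery --- by Proposition~\ref{lem:connected_component} every cover $x\lessdot xt$ in $[v,w]$ has $t$ supported in one block $[\bar p_i,\bar q_i+1]$, the blocks are disjoint by properness, and the entries of any $u\in[v,w]$ outside the blocks agree with those of $v$, so each $u$ restricts block-by-block; or (ii) abandon the product decomposition altogether and, as in the paper, just count the atoms of $[v,w]$ and invoke Proposition~\ref{prop:3-7}. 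Either repair uses properness in an essential way that your current sketch does not.
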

\begin{proof}
Since $w=s_{j_1}s_{j_2}\cdots s_{j_m}v$ means $w^{-1}=v^{-1}s_{j_1}s_{j_2}\cdots s_{j_m}$, it is enough to prove the proposition when $w=vs_{j_1}s_{j_2}\cdots s_{j_m}$ by Corollary~\ref{coro:cube-equiv}.
We know that $Q_{v,w}$ is toric by Proposition~\ref{prop:6-1} and hence every cover relation in $[v,w]$ gives an edge in $Q_{v,w}$ by Theorem~\ref{prop:3-2}. Thus it suffices to show that $v$ is covered by exactly $\ell(w)-\ell(v)$ elements in $[v,w]$ {by} Proposition~\ref{prop:3-7}.

Note that when $w=vs(1,n-1)$ or $vs(n-1,1)$, we know that $Q_{v,w}$ is an $(n-1)$-cube by~\cite[{\S 5}]{ha-ho-ma-pa18}. Therefore $v$ is covered by exactly $(n-1)$ elements in $[v,w]$ (we will show how to find those $(n-1)$ elements after the proof of the proposition).

Now we assume that $w=vs(p_1,q_1)\cdots s(p_r,q_r)$, where  $s(p_1,q_1)\cdots s(p_r,q_r)$ is a minimal expression with $r\ge 2$. For $u\in\mathfrak{S}_n$ and $1\le a<b\le n$, we denote the block from $u(a)$ to $u(b)$ in the one-line notation of~$u$ by $u({[a,b]})$. Since no two intervals among $[\bar{p}_1,\bar{q}_1],\dots,[\bar{p}_r,\bar{q}_r]$ are adjacent, we have
\[
\text{$w({[\bar{p}_i,\bar{q}_i+1]})=v({[\bar{p}_i,\bar{q}_i+1]})s(p_i,q_i)$\quad for each $i=1,2,\dots,r$.}
\]
Namely, on each block $[\bar{p}_i,\bar{q}_i+1]$, the situation is the same as the first case treated above. Therefore, $v$ is covered by exactly $\sum_{i=1}^r(\bar{q}_i+1-\bar{p}_i)$ elements in $[v,w]$. Here, $\sum_{i=1}^r(\bar{q}_i+1-\bar{p}_i)=\ell(w)-\ell(v)$, so this proves the {proposition}. To be more precise, we need the following observation. 
Any cover relation is obtained by a right transposition and we need to see that any right transposition on $v$ which gives a cover relation in $[v,w]$ is a transposition on some block $[\bar{p}_i,\bar{q}_i+1]$. But this is true because the partition $B_{v,w}$ of~$[n]$ is given by
\[
B_{v,w}=\bigcup_{i=1}^r[\bar{p}_i,\bar{q}_i+1]\cup \bigcup_{k\in [n]\backslash \bigcup_{i=1}^r[\bar{p}_i,\bar{q}_i+1]}\{k\}
\]
and a right transposition on $v$ which gives a cover relation in $[v,w]$ must preserve the partition $B_{v,w}$ since the partition $B_{v,w}$ is independent of a choice of maximal chains from $v$ to $w$ by~\cite[Corollary 4.8]{ts-wi15}.
\end{proof}

We briefly explain how to find the atoms of $[v,w]$ when $w=vs(1,n-1)$ or $vs(n-1,1)$ (see the proof of Proposition~\ref{prop:7-3}). When $w=vs(1,n-1)$ and $\ell(w)-\ell(v)=n-1$ (this means that $v(1)=1$). We write $v=v(1)v(2)\dots v(n)$ in one-line notation where $v(1)=1$. For each $v(j)$ for $2\le j\le n$, we find $v(i)$ such that $i<j$, $v(i)<v(j)$ and $v(k)>v(j)$ for any $i<k<j$, and interchange $v(i)$ and $v(j)$. The resulting element covers $v$ and one can check that it is in $[v,w]$.
The $(n-1)$ elements obtained in this way are the desired elements.

When $w=vs(n-1,1)$ and $\ell(w)-\ell(v)=n-1$ (this means that $v(n)=n$), the method to find elements in $[v,w]$ which cover $v$ is essentially same as above. We write $v=v(1)v(2)\dots v(n)$ in one-line notation where $v(n)=n$. For each $v(i)$ for $1\le i\le n-1$, we find $v(j)$ such that $i<j$, $v(i)<v(j)$ and $v(i)>v(k)$ for any $i<k<j$ and interchange $v(i)$ and $v(j)$. The resulting element covers $v$ and one can check that it is in $[v,w]$. The $(n-1)$ elements obtained {in} this way are the desired elements.

\begin{example} \label{exam:6-2}
	\begin{enumerate}
\item Take $v=14325$ and $w=vs(1,4)=43251$. In this case, the four elements in $[v,w]$ which cover $v$ are
\[
41325,\quad 34125,\quad 24315, \quad 14352.
\]
Although $15324$ and $14523$ cover $v$, they are not in $[v,w]$.

\item Take the same $v=14325$ as above but $w=vs(4,1)=51432$. In this case, the four elements in $[v,w]$ which cover $v$ are
\[
41325,\quad 15324,\quad 14523,\quad 14352.
\]
Although $34125$ and $24315$ cover $v$, they are not in $[v,w]$.
\item Take the same $v = 14325$ as above but $w = vs(1)s(4,3) = 41532$. There are three elements in $[v,w]$ cover $v$:
\[
41325 = 14325(1,2),\quad 14523 = 14325(3,5),\quad 14352 = 14325(4,5).
\]
We have that $\{1,2\} \subset [1,2]$ and $\{3,5\}, \{4,5\} \subset [3,5]$. Although $34125 = 14325(1,3)$, $24315 = 14325(1,4)$, and $15324 = 14325(2,5)$ which cover $v$, they are not in $[v,w]$. Here, one can see that none of the subsets $\{1,3\}, \{1,4\}, \{2,5\}$ of $[5]$ are contained in $[1,2]$ or $[3,5]$.
\end{enumerate}
\end{example}

Now we consider the converse of Proposition~\ref{prop:6-2}. If the product in \eqref{eq:6-2} is not proper, then
we may assume that the intervals $[\bar{p}_1,\bar{q}_1]$ and $[\bar{p}_{2},\bar{q}_{2}]$ in \eqref{eq:6-2} are adjacent by interchanging commuting factors if necessary.  We set $(a,b)=(p_1,q_1)$ and $(c,d)=(p_2,q_2)$. Then
\begin{equation} \label{eq:6-3}
w=vs(a,b)s(c,d)x \qquad \text{where}\quad x=s(p_3,q_3)\cdots s(p_r,q_r).
\end{equation}
The automorphism of $\mathfrak{S}_n$ given by conjugation by $w_0$ maps $s_i$ to $s_{n-i}$ and hence maps $s(a,b)s(c,d)$ to $s(n-a,n-b)s(n-c,n-d)$, so we may assume $a\le b$ without loss of generality.  

\begin{lemma} \label{lemm:6-3}
Assume that $a\leq b$ and $s(a,b)s(c,d)$ is minimal but not proper. Then
the following three cases occur:
\begin{enumerate}
\item $b+1=d$, $c>d$,
\item $a-1=c$, $c\ge d$, $a<b$,
\item $a-1=d$, $c<d$.
\end{enumerate}
\end{lemma}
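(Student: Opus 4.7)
The proof is a four-way case analysis. Non-properness says $[a,b]$ and $[\bar c,\bar d]$ are adjacent, i.e., either $b+1=\bar c$ (Case~A) or $\bar d+1=a$ (Case~B). Within each case I would split on whether $c\le d$ or $c>d$, which fixes whether $\bar c,\bar d$ equal $c,d$ or $d,c$ respectively, producing four subcases to analyze.

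Two subcases force the product $s(a,b)s(c,d)$ to collapse to a single factor $s(p,q)$, contradicting the minimality of the expression~\eqref{eq:6-2}. In Case~A with $c\le d$, one has $c=b+1$ and
\[
s(a,b)s(c,d)=s_as_{a+1}\cdots s_bs_{b+1}\cdots s_d=s(a,d),
\]
contradiction. In Case~B with $c>d$ and $a=b$, one has $c=a-1$ and
\[
s(a,b)s(c,d)=s_a(s_{a-1}s_{a-2}\cdots s_d)=s(a,d),
\]
again contradiction. The degenerate sub-subcase $c=d=a-1$ with $a=b$ (inside Case~B with $c\le d$) likewise collapses via $s_as_{a-1}=s(a,a-1)$, so it too is excluded.

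The surviving subcases match (1)--(3) of the statement: Case~A with $c>d$ yields $\bar c=d=b+1$ and $c>d$, which is (1); Case~B with $c>d$ and $a<b$ yields $c=a-1\ge d$ with $a<b$, which is (2), and the sub-subcase $c=d=a-1$ with $a<b$ also lies in (2); Case~B with $c<d$ yields $d=a-1$ with $c<d$, which is (3). One checks quickly that (1)--(3) are pairwise disjoint: in (1) the value $d=b+1$ exceeds $b$, while in (2) and (3) we have $c\le a-1$ and $d\le a-1$ respectively, both strictly below $a$.

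The main obstacle is purely bookkeeping: one must consistently distinguish the pair $(c,d)$ defining the factor $s(c,d)$ from the pair $(\bar c,\bar d)$ giving its index interval, and translate adjacency accordingly. The two collapse identities used above are immediate from the definition of $s(p,q)$, so once the four subcases are enumerated the rest is mechanical.
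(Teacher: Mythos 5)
Your proof is correct and follows essentially the same route as the paper's: split on which side the adjacency occurs and on the sign of $c-d$, kill the two collapsing subcases by minimality, and match the survivors to (1)--(3). The only (cosmetic) difference is that you route the boundary case $c=d$ through the subcase $\bar d=d$ rather than $\bar d=c$ before assigning it to (2), and you add an unrequired but harmless disjointness check.
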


\begin{proof}
Since the intervals $[a,b]$ and $[\bar{c},\bar{d}]$ are adjacent, we have $b+1=\bar{c}$ or $a-1=\bar{d}$. Then there are four possible cases.
\begin{enumerate}\setcounter{enumi}{-1}
\item If $b+1=\bar{c}=c$, then $c\le d$ and hence $s(a,b)s(c,d)=s(a,d)$ which contradicts the minimality of the expression $s(a,b)s(c,d)x$. Thus the case $b+1=c$ does not occur.
\item If $b+1=\bar{c}=d$, then $c\ge d$ but the case $c=d$ is excluded above. Hence $c>d$ which is case (1).
\item If $a-1=\bar{d}=c$, then $c\ge d$. Moreover, if $a=b$, then $s(a,b)s(c,d)=s(a,d)$ which contradicts the minimality of the expression $s(a,b)s(c,d)x$. Thus we obtain case (2).
\item If $a-1=\bar{d}=d$, then $c\le d$. Here the case $c=d$ is included in case (2), so we obtain case~(3).
\end{enumerate}
Therefore, the only three cases occurs.
\end{proof}

The following lemma shows that if a minimal expression $s(a,b)s(c,d)$ is not proper, then there exists a permutation $v$ such that $Q_{v,vs(a,b)s(c,d)}$ is not a cube.
\begin{lemma} \label{lemm:6-4}
Let $v=s_d$ for cases {\rm(1)} and {\rm(3)} in Lemma~\ref{lemm:6-3} and $v=s(a+1,b)s_c$ for case {\rm(2)} in Lemma~\ref{lemm:6-3}. Then {the Bruhat interval} $[v,w]$ for $w$ in \eqref{eq:6-3} has $\ell(w)-\ell(v)+1$ coatoms {\rm(}so $Q_{v,w}$ is not a cube although it is toric{\rm)}.
\end{lemma}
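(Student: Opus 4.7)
The plan is to exhibit $m+1=\ell(w)-\ell(v)+1$ elements of $[v,w]$ that are covered by $w$, in each of the three cases of Lemma~\ref{lemm:6-3}. Since the simple transpositions appearing in $s(a,b)s(c,d)x$ are pairwise distinct, Proposition~\ref{prop:6-1} guarantees that $Q_{v,w}$ is toric, so $\dim Q_{v,w}=m$. By Proposition~\ref{prop:3-5}, a toric Bruhat interval polytope is a cube if and only if it is simple, so exhibiting more than $m$ coatoms at the vertex $w$ suffices to conclude that $Q_{v,w}$ is not a cube.

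First I would reduce to the case $x=e$. The supports of the cycles of $x=s(p_3,q_3)\cdots s(p_r,q_r)$ lie in positions disjoint from the coupled region where $s(a,b)s(c,d)$ acts, and each such cycle contributes exactly as many cover relations at $w$ in $[v,w]$ as its length, independently of what happens inside the coupled region. It therefore suffices to produce $\ell(vs(a,b)s(c,d))-\ell(v)+1$ coatoms when $w=vs(a,b)s(c,d)$.

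For case~(1) ($v=s_d$, $d=b+1$, $c>d$) I would compute directly that $w$ decomposes as the disjoint cycle product $(a,a{+}1,\ldots,d{-}1,d{+}1)(d,c{+}1,c,\ldots,d{+}2)$ on $[a,c+1]$ and is the identity elsewhere. A descent analysis on the one-line notation of $w$ then identifies the elements $y\lessdot w$ in $\mathfrak{S}_n$ as $w\cdot t$ for $t$ in the set
\[
\{(i,d+1)\mid a\le i\le d\}\cup\{(d,j)\mid d+1\le j\le c+1\}\cup\{(d-1,d+2)\},
\]
of cardinality $(d-a+1)+(c-d+1)-1+1=c-a+2=m+1$ (with $(d,d+1)$ appearing in the intersection of the first two families). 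The tableau criterion then verifies $wt\ge v=s_d$ for every such $t$. Case~(3) ($v=s_{a-1}$, $d=a-1$, $c<d$) admits a parallel analysis: $w$ decomposes as $(c,c{+}1,\ldots,a{-}2,a)(a{-}1,a{+}1,\ldots,b{+}1)$, and the same enumeration produces $m+1$ coatoms with the additional one being $w\cdot(a-2,b+1)$. Case~(2) ($v=s(a+1,b)s_c$, $c=a-1$, $d\le c$) follows the same template: direct computation of $w$ and its covers again yields $m+1$ coatoms, the extra one arising from a reflection that crosses between the two adjacent intervals $[d,a-1]$ and $[a,b+1]$.

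The main obstacle will be the bookkeeping in case~(2), where $v$ is no longer a single simple transposition but itself contains an internal cycle. Verifying $y\ge v$ via the tableau criterion for each candidate coatom requires a case split on how the reflection $t=(i,j)$ interacts with the support of $v=s(a+1,b)s_c$. A secondary difficulty is handling the degenerate parameter values uniformly (for instance $a=d-1$ in case~(1) causing the first cycle to collapse to a transposition), ensuring the count $m+1$ remains valid in these boundary cases.
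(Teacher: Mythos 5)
Your proposal is correct in outline but takes a genuinely different route from the paper. The paper's proof never touches one-line notation: it fixes the explicit reduced word $w=v\,s(a,b)\,s(c,d)\,x$ and uses the fact that every coatom of $[v,w]$ arises by deleting a single letter so that the result is still reduced and still contains a reduced word for $v$; the whole argument is then a reducedness/containment check on deletions (easy in cases (1) and (3) since only $s_d$ repeats and its non-commuting neighbours sit between the two occurrences, and requiring a genuine commutation analysis only for the underlined block in case (2)). You instead compute $w$ as a permutation, enumerate the reflections $t$ with $wt\lessdot w$, and verify $wt\ge v$ by the tableau criterion, after first splitting off $x$ via the product structure of the interval. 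Both are sound; I checked your case-(1) cover set against $[1324,3412]$, $[s_3,246135]$ and $[s_4,2357146]$, and your cases (2),(3) against $[2143,4231]$ and $[s_2,34152]$, and the three families are exactly the covers in each instance. Indeed in case (1) the block form $w=[a{+}1,\dots,d{-}1,d{+}1,c{+}1,a,d,d{+}2,\dots,c]$ makes your enumeration routine, and the check $wt\ge s_d$ is automatic: the first $d$ positions of $w$ contain the two values $d{+}1$ and $c{+}1$ exceeding $d$, and one transposition can expel at most one of them, so $\{wt(1),\dots,wt(d)\}\neq\{1,\dots,d\}$ always. What the paper's approach buys is uniformity and brevity (the containment of $v$ is nearly free, and $x$ is carried along with no extra cost); what yours buys is an explicit list of the coatoms as permutations $wt_{i,j}$, very much in the spirit of the paper's own Section~8, plus the clean reduction to $x=e$ which the paper does not bother to isolate. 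The price is exactly the bookkeeping you flag: in case (2) the vertex $v=s(a{+}1,b)s_c$ forces a real tableau-criterion case split, and that is where the remaining work lies.
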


\begin{proof}
We note that any coatom of $[v,w]$ is obtained by removing an element from a reduced expression of~$w$ such that the resulting expression is reduced and contains a reduced expression of~$v$.

For case (1) in Lemma~\ref{lemm:6-3}, we have the reduced expression of~$w$:
\begin{equation} \label{eq:6-4}
w=vs(a,b)s(c,d)x=s_d(s_as_{a+1}\cdots s_{b-1}s_b)(s_cs_{c-1}\cdots s_{d+1}s_d)x.
\end{equation}
As remarked above, any coatom is obtained by removing an element from \eqref{eq:6-4}.
In \eqref{eq:6-4}, only $s_d$ appears twice, the others appear only once, and the elements $s_b=s_{d-1}$ and $s_{d+1}$ which do not commute with $s_d$ appear between the two {$s_{d}$'s} in \eqref{eq:6-4}. Noting these, one can see that removing any element from \eqref{eq:6-4} produces a reduced expression and the resulting expression contains $v=s_d$. Therefore $[v,w]$ has exactly $\ell(w)-\ell(v)+1$ coatoms.

A similar argument works for case (3). In this case we have
\begin{equation} \label{eq:6-5}
w=vs(a,b)s(c,d)x=s_d(s_as_{a+1}\cdots s_{b-1}s_b)(s_cs_{c+1}\cdots s_{d-1}s_d)x
\end{equation}
and only $s_d$ appears twice and the elements $s_a=s_{d+1}$ and $s_{d-1}$ which do not commute with $s_d$ appear between the two $s_d$'s. Therefore, removing any element from \eqref{eq:6-5} produces a reduced expression and the resulting expression contains $v=s_d$, so $[v,w]$ has exactly $\ell(w)-\ell(v)+1$ coatoms.

As for case (2), the situation is slightly different from the above two cases. In case (2) we have
\begin{equation} \label{eq:6-6}
\begin{split}
w&=vs(a,b)s(c,d)x\\
&=(s_{a+1}s_{a+2}\cdots s_{b-1}s_bs_c)(\underline{s_as_{a+1}s_{a+2}\cdots s_{b-1}}s_b)(s_cs_{c-1}\cdots s_{d+1}s_d)x.
\end{split}
\end{equation}
In this case, $s_{a+1}, s_{a+2},\dots, s_{b-1}, s_b, s_c(=s_{a-1})$ appear twice and the others appear once in \eqref{eq:6-6}. One can see that removing any element from the underlined product in \eqref{eq:6-6} does not produce a reduced expression. For instance, if we remove $s_a$, then $s_c$ commute with all the elements between the two $s_c$ in \eqref{eq:6-6}; so the resulting expression is not reduced. If we remove $s_{a+1}$ in the underlined product, then \eqref{eq:6-6} turns into
\[
s_cs_{a+1}s_a(s_{a+2}\cdots s_{b-1}s_b)(s_{a+2}\cdots s_{b-1}s_b)(s_cs_{c-1}\cdots s_{d+1}s_d)x
\]
since $s_c(=s_{a-1})$ commutes with all the elements $s_k$ for $a+1\le k\le b$ and $s_a$ commutes with all the elements $s_\ell$ for $a+2\le \ell\le b$. The above expression is not reduced because the product $(s_{a+2}\cdots s_{b-1}s_b)(s_{a+2}\cdots s_{b-1}s_b)$ can be reduced. A similar observation applies when we remove one of the other elements in the underlined product in \eqref{eq:6-6}.

On the other hand, removing an element not in the underlined product produces a reduced expression (for that the existence of $s_a$ in \eqref{eq:6-6} is important).
One can also see that the elements obtained in this way
contains the reduced expression $s_{a+1}s_{a+2}\cdots s_{b-1}s_bs_c$ of $v$ and there are $\ell(w)-\ell(v)+1$ number of such elements, so $[v,w]$ has exactly $\ell(w)-\ell(v)+1$ coatoms in this case too.
\end{proof}

Proposition~\ref{prop:6-2} and the two lemmas above say that the converse of Proposition~\ref{prop:6-2} is true.
\begin{corollary} \label{coro:6-5}
The Bruhat interval polytope $Q_{v,w}$ is a cube for any $v$ and $w$ in~\eqref{eq:6-1} if and only if the product $s_{j_1} s_{j_2} \cdots s_{j_m}$ is proper.
\end{corollary}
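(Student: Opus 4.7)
\textbf{Proof plan for Corollary~\ref{coro:6-5}.} The statement packages the three preceding results, so the plan is essentially to combine Proposition~\ref{prop:6-2} with Lemmas~\ref{lemm:6-3} and~\ref{lemm:6-4}, with one minor verification.

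The forward direction is immediate: if the minimal expression $s_{j_1}s_{j_2}\cdots s_{j_m} = s(p_1,q_1)\cdots s(p_r,q_r)$ is proper, then Proposition~\ref{prop:6-2} applies verbatim to any $v$ satisfying \eqref{eq:6-1} and concludes that $Q_{v,w}$ is a cube.

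For the converse direction I would argue the contrapositive: assuming the minimal expression is \emph{not} proper, I would exhibit a specific $v$ satisfying \eqref{eq:6-1} for which $Q_{v,w}$ fails to be a cube. First, after permuting commuting factors in the minimal decomposition (which alters neither $w$ nor properness), I would arrange that the first two factors $s(a,b)$ and $s(c,d)$ correspond to adjacent intervals. Next, conjugation by $w_0$ (which sends $s_i\mapsto s_{n-i}$, preserves hypothesis \eqref{eq:6-1}, and preserves being a cube) lets me assume $a\le b$. Lemma~\ref{lemm:6-3} then splits the situation into the three explicit cases (1)--(3), and in each case Lemma~\ref{lemm:6-4} supplies an element $v$ (namely $v=s_d$ in cases (1) and (3), and $v=s(a+1,b)s_c$ in case (2)) such that $[v,w]$ has exactly $\ell(w)-\ell(v)+1$ coatoms.

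To finish, since the indices $j_1,\dots,j_m$ remain distinct, Proposition~\ref{prop:6-1} gives that $Q_{v,w}$ is toric, so $\dim Q_{v,w}=\ell(w)-\ell(v)$. By Theorem~\ref{prop:3-2}, every cover relation $y\lessdot w$ in $[v,w]$ produces an edge of $Q_{v,w}$ incident to the vertex $w$, so the number of edges at $w$ equals the number of coatoms, which is $\ell(w)-\ell(v)+1>\dim Q_{v,w}$. Therefore $w$ is not a simple vertex of $Q_{v,w}$, so $Q_{v,w}$ is not simple and in particular not a cube, proving the contrapositive.

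The only technical point that needs a moment's care, and the closest thing to an obstacle, is checking that the $v$ prescribed by Lemma~\ref{lemm:6-4} genuinely falls under the hypothesis \eqref{eq:6-1}, i.e.\ that $\ell(w)-\ell(v)=m$. This is transparent from the reduced expressions \eqref{eq:6-4}, \eqref{eq:6-5}, \eqref{eq:6-6} used in the proof of Lemma~\ref{lemm:6-4}: the displayed expressions for $w$ are already verified there to be reduced and to begin with a reduced expression of $v$, so removing that prefix leaves the desired product of $m$ distinct simple transpositions. Once this is noted, the argument above closes the corollary.
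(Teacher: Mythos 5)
Your proposal is correct and follows the paper's own route exactly: the paper deduces the corollary in one line by combining Proposition~\ref{prop:6-2} (the ``if'' direction) with Lemmas~\ref{lemm:6-3} and~\ref{lemm:6-4} (which, for a non-proper product, produce a $v$ with $\ell(w)-\ell(v)+1$ coatoms), and this is precisely the assembly you carry out. The details you add---checking that the $v$ from Lemma~\ref{lemm:6-4} satisfies \eqref{eq:6-1}, and that in a toric $Q_{v,w}$ the coatom count equals $d(w)$, so $d(w)>\dim Q_{v,w}$ rules out a cube---are exactly what the parenthetical remark in Lemma~\ref{lemm:6-4} leaves implicit.
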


\begin{example}
	Suppose that $s_{j_1} s_{j_2} s_{j_3} = s_3s_1s_2$.
	Then the Bruhat interval polytope $Q_{v,w}$ is a cube when $v = e$ and $w = s_3s_1s_2$ since $s_3, s_1, s_2$ are distinct. On the other hand, for $v = s_2$ and $w = s_2s_3s_1s_2$, the Bruhat interval polytope $Q_{v,w}$ is not a cube by Example~\ref{exam:3-1}(2). Indeed, the product $s_3s_1s_2 = s(3,3) s(1,2)$ is not proper. 
\end{example}

Below is another sufficient condition on $v$ and $w$ for $Q_{v,w}$ to be a cube.

\begin{proposition} \label{prop:6-6}
Suppose that $w$ is a product of distinct simple reflections {\rm(}equivalently, $w$ avoiding the patterns $3412$ and $321$ by Tenner \cite{tenn07}{\rm)}. Then $Q_{v,w}$ is a cube for any $v$ and $w$ such that $v< w$. In particular, if $v$ and $w$ are as in \eqref{eq:6-1} and $v$ has a reduced expression $s_{i_1}s_{i_2}\cdots s_{i_\ell}$ such that $i_1,\dots,i_\ell,j_1,\dots,j_m$ are all distinct, then $Q_{v,w}$ is a cube.
\end{proposition}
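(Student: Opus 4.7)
The ``in particular'' clause reduces to the first statement: if $v$ has a reduced expression $s_{i_1}\cdots s_{i_\ell}$ with $i_1,\dots,i_\ell,j_1,\dots,j_m$ all distinct, then $w=v s_{j_1}\cdots s_{j_m}=s_{i_1}\cdots s_{i_\ell}s_{j_1}\cdots s_{j_m}$ is already a reduced expression of $w$ using only distinct simple reflections, so the first clause applies. I therefore focus on proving the first statement. Assume $w$ is a product of distinct simple reflections; by the cited theorem of Tenner this is equivalent to $[e,w]$ being a Boolean algebra, and hence for any $v\le w$ the sub-interval $[v,w]\subseteq[e,w]$ is Boolean as well. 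By Theorem~\ref{theo:3-6}, it then suffices to prove that $Q_{v,w}$ is toric, i.e.\ $\dim Q_{v,w}=\ell(w)-\ell(v)$.

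Fix a reduced expression $w=s_{k_1}\cdots s_{k_m}$ with distinct $k_i$'s. The Boolean property lets us identify $v$ with a unique subword $v=s_{k_{j_1}}\cdots s_{k_{j_\ell}}$ indexed by $J=\{j_1<\cdots<j_\ell\}\subseteq[m]$; writing $J^c=[m]\setminus J$ and $R_i:=\prod_{q\in J,\,q>i}s_{k_q}$ (in increasing order of $q$), the atoms of $[v,w]$ are exactly $\{v t_i:i\in J^c\}$ with cover transposition
\[
t_i=R_i^{-1}s_{k_i}R_i=\bigl(R_i^{-1}(k_i),\,R_i^{-1}(k_i+1)\bigr).
\]
Thus $G^{v,w}_v$ has exactly $|J^c|=\ell(w)-\ell(v)$ edges, and by~\eqref{eq:dim-BIP} the toricity of $Q_{v,w}$ is equivalent to the statement that these edges form a forest on the vertex set $[n]$.

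This forest claim is the main obstacle and the technical heart of the argument. My plan is to argue by induction on $|J^c|$. For the inductive step, set $i^*:=\min J^c$ and $v^*:=v t_{i^*}\in[v,w]$, whose subword is $J\cup\{i^*\}$. Because $i^*$ is minimal in $J^c$, every remaining $i\in J^c\setminus\{i^*\}$ satisfies $i>i^*$, so the insertion of $s_{k_{i^*}}$ in forming $v^*$ occurs strictly to the left of position $i$; hence the right tail used in the description of the cover transposition at $i$ for the interval $[v^*,w]$ coincides with $R_i$, giving $t_i^{(v^*)}=t_i^{(v)}$. Applying the inductive hypothesis to $[v^*,w]$---whose top is still a product of distinct simple reflections---shows that the edges $\{t_i:i\in J^c,\ i>i^*\}$ already form a forest $F$. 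It remains to verify that adjoining the edge $\{R_{i^*}^{-1}(k_{i^*}),R_{i^*}^{-1}(k_{i^*}+1)\}$ of $t_{i^*}$ to $F$ still yields a forest, i.e.\ that its two endpoints lie in distinct connected components of $F$. This delicate final step requires a careful analysis of how conjugation by the various tails $R_i$ interacts, and it uses crucially the distinctness of the $k_q$'s appearing in them. Once it is carried out, $G^{v,w}_v$ is a forest with $\ell(w)-\ell(v)$ edges, whence $\dim Q_{v,w}=\ell(w)-\ell(v)$ and $Q_{v,w}$ is toric; Theorem~\ref{theo:3-6} then delivers that $Q_{v,w}$ is a cube.
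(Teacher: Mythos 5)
Your reduction of the ``in particular'' clause to the first statement is correct, and your overall strategy is legitimate: $[e,w]$ Boolean (Tenner) forces $[v,w]$ Boolean, so by Theorem~\ref{theo:3-6} only toricity of $Q_{v,w}$ remains, and toricity is indeed equivalent, via \eqref{eq:dim-BIP}, to the $\ell(w)-\ell(v)$ edges of $G^{v,w}_v$ forming a forest on $[n]$. The problem is that this forest claim --- which you yourself identify as ``the main obstacle and the technical heart of the argument'' --- is never actually proved. Your induction correctly reduces it to checking that the one new edge $\{R_{i^*}^{-1}(k_{i^*}),\,R_{i^*}^{-1}(k_{i^*}+1)\}$ joins two distinct components of the forest $F$ obtained from $[v^*,w]$, but you then write that this ``requires a careful analysis'' and continue with ``once it is carried out.'' That missing verification is essentially the entire content of the toricity assertion; distinctness of the $k_q$'s does not by itself rule out a cycle among the conjugated transpositions $R_i^{-1}s_{k_i}R_i$, and no concrete argument is supplied. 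As written, the proof is therefore incomplete.

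The gap can be closed without any graph analysis, and this is the route the paper takes: by Fan's theorem (quoted in the introduction), $Q_{e,w}$ is a cube whenever $w$ is a product of distinct simple reflections; a cube is in particular toric, so Theorem~\ref{prop:3-2} shows that every subinterval $[v,w]\subset[e,w]$ is realized as a face $Q_{v,w}$ of $Q_{e,w}$; and every face of a combinatorial cube is again a cube. If you prefer to keep your ``Boolean $+$ toric'' framing via Theorem~\ref{theo:3-6}, you can still avoid the forest computation: once $Q_{v,w}$ is known to be a face of the cube $Q_{e,w}$, it is a cube of dimension $\ell(w)-\ell(v)$ (each cover relation drops the dimension of the corresponding face by exactly one in a toric Bruhat interval polytope), so toricity comes for free. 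Either way, the direct inductive attack on $G^{v,w}_v$ should be replaced or completed.
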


\begin{proof}
If $w$ is a product of distinct simple reflections, then $Q_{{e},w}$ is a cube. Since $Q_{{e},w}$ is in particular toric, $Q_{v,w}$ is a face of $Q_{{e},w}$ for any $v$ such that $v<w$ by {Theorem}~\ref{prop:3-2}. Therefore $Q_{v,w}$ is also a cube.
\end{proof}

So far, we have studied whether $Q_{v,w}$ is toric when there exist reduced expressions $r(v)$ and $r(w)$ for $v$ and $w$ such that the subword $r(w)\setminus r(v)$ of $r(w)$ is a product of distinct simple transpositions. Unfortunately, there is an example that $Q_{v,w}$ is a cube even though there are no reduced expressions for $v$ and $w$ such that $r(w)\setminus r(v)$ is distinct.

\begin{example}\label{ex:not-distinct-cube}
  Let $v=1243$ and $w=3412$. Then $v=s_3$ and $w=s_2s_3s_1s_2=s_2s_1s_3s_2$. Hence there are no reduced expressions of $v$ and $w$ such that $r(w)\setminus r(v)$ is a product of distinct simple transpositions. But one can check that the Bruhat interval polytope $Q_{v,w}$ is a $3$-cube.
\end{example}
Therefore, it seems difficult to characterize $v$ and $w$ for which $Q_{v,w}$ is toric or combinatorially equivalent to a cube.
\section{Finding all coatoms in some special cases}\label{sec:Finding all coatoms}

In this section, we will find all coatoms in some special cases. We first find a necessary and sufficient condition for ${w(i,j)}$ to be a coatom of $[v,w]$ when $w = vs(1,n-1)$, and then conclude that there are exactly $(n-1)$ elements in $[v,vs(1,n-1)]$ without using the result in~\cite[{\S 5}]{ha-ho-ma-pa18}. After that, we consider the case $w=vs(a,b)s(c,d)$ where $s(a,b)s(c,d)$ is minimal. We give a necessary and sufficient condition for ${w(i,j)}$ to be a coatom of $[v,w]$ and then describe when $Q_{v,w}$ is a cube.  Note that finding atoms is essentially same as finding coatoms because multiplication by the longest element $w_0$ reverses the Bruhat order.

The following lemma is obvious but plays a role in our argument.

\begin{lemma} \label{lemm:7-1}
Let $c_1,\dots,c_d$ be distinct positive integers.
\begin{enumerate}
\item Let $a$ and $b$ be positive integers different from any~$c_i$. Then
\[
\{ c_1,\dots,c_d,a\}\!\uparrow\ \le \{ c_1,\dots,c_d,b\}\!\uparrow\quad \Longleftrightarrow\quad a\le b.
\]
\item Let $a_1$ and $a_2$ (also $b_1,b_2$) be distinct positive integers different from any~$c_i$. Then
\[
\{ c_1,\dots,c_d,a_1,a_2\}\!\uparrow\ \le \{ c_1,\dots,c_d,b_1,b_2\}\!\uparrow\quad \Longleftrightarrow\quad
\{a_1,a_2\}\!\uparrow\ \le \{b_1,b_2\}\!\uparrow.
\]
\end{enumerate}
\end{lemma}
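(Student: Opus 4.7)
The plan is to reduce both statements to the following elementary reformulation of componentwise order on ascending-sorted tuples: for two $k$-element sets $S, T$ of positive integers with ascending tuples $S\!\uparrow = (s_1 < \cdots < s_k)$ and $T\!\uparrow = (t_1 < \cdots < t_k)$, the inequality $S\!\uparrow \le T\!\uparrow$ (meaning $s_j \le t_j$ for every $j$) holds if and only if $|S \cap [1,m]| \ge |T \cap [1,m]|$ for every positive integer $m$. This equivalence is immediate: one direction follows by taking $m=t_j$, and the other by unwinding the meaning of $|S \cap [1,m]|\ge j$ as $s_j\le m$.

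With this criterion in hand, both parts become transparent because the common elements $c_1,\ldots,c_d$ contribute identically to the cardinality counts on the two sides and therefore cancel. For part (1), for every $m$
\[
\bigl|\{c_1,\ldots,c_d,a\}\cap[1,m]\bigr| - \bigl|\{c_1,\ldots,c_d,b\}\cap[1,m]\bigr| = \chi_{[1,m]}(a) - \chi_{[1,m]}(b),
\]
where $\chi_{[1,m]}$ denotes the indicator function; this difference is nonnegative for every $m$ precisely when $a\le b$. For part (2), the same cancellation yields
\[
\bigl|\{c_1,\ldots,c_d,a_1,a_2\}\cap[1,m]\bigr| - \bigl|\{c_1,\ldots,c_d,b_1,b_2\}\cap[1,m]\bigr| = \bigl|\{a_1,a_2\}\cap[1,m]\bigr| - \bigl|\{b_1,b_2\}\cap[1,m]\bigr|,
\]
so applying the criterion on both sides identifies the two inequalities.

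There is essentially no real obstacle here: the lemma is the elementary observation that shared elements cancel from the counting criterion. The only place where the hypotheses of the lemma are used is to guarantee that no hidden coincidence inflates a cardinality; this is exactly the assumption that $a,b$ (respectively $a_1,a_2,b_1,b_2$) are distinct from the $c_i$'s and, in part (2), from each other within each pair.
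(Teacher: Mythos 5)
Your proof is correct. The paper states Lemma~\ref{lemm:7-1} without proof (it is introduced as ``obvious''), so there is no argument of the authors' to compare against; your reduction to the counting criterion $S\!\uparrow\ \le T\!\uparrow\ \Longleftrightarrow\ |S\cap[1,m]|\ge|T\cap[1,m]|$ for all $m$, followed by cancellation of the shared elements $c_1,\dots,c_d$, is a clean and complete justification, with both directions of each equivalence verified and the distinctness hypotheses used exactly where needed to keep the two sets of equal cardinality.
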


\medskip

Note that if $\{a_1,a_2\}\cap \{b_1,b_2\}\not=\emptyset$, then {\rm (2)} reduces to {\rm (1)} in the lemma above.
We prepare one more lemma.

\begin{lemma} \label{lemm:7-2}
Let $v\le w$ and let $t_{i,j}$ $(1\le i<j\le n)$ be a transposition. Then $v\le wt_{i,j}$ if and only if
\begin{equation*}
\{v(1),v(2),\dots,v(p)\}\!\uparrow\, \le \{wt_{i,j}(1),wt_{i,j}(2),\dots,wt_{i,j}(p)\}\!\uparrow \ \text{for every $i\le  p<j$}.
\end{equation*}
\end{lemma}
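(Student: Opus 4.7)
The plan is to reduce this directly to the classical tableau criterion for Bruhat order, which states that for $u, u' \in \mathfrak{S}_n$, one has $u \le u'$ if and only if $\{u(1),\dots,u(p)\}\!\uparrow \le \{u'(1),\dots,u'(p)\}\!\uparrow$ for every $1 \le p \le n-1$. Applying this criterion with $u = v$ and $u' = wt_{i,j}$, the inequality $v \le wt_{i,j}$ is equivalent to the componentwise comparison holding for all $p$, and the goal is to show that the range $1 \le p \le n-1$ can be replaced by $i \le p < j$.

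The key observation is that right multiplication by the transposition $t_{i,j}$ only alters the values of the permutation at the positions $i$ and $j$; explicitly, $(wt_{i,j})(k) = w(k)$ for $k \ne i,j$, while $(wt_{i,j})(i) = w(j)$ and $(wt_{i,j})(j) = w(i)$. Consequently, as unordered sets,
\[
\{wt_{i,j}(1),\dots,wt_{i,j}(p)\} = \{w(1),\dots,w(p)\}
\]
whenever $p < i$ (neither $i$ nor $j$ lies in $\{1,\dots,p\}$) or $p \ge j$ (both $i$ and $j$ lie in $\{1,\dots,p\}$, and the swap preserves the set). For such $p$, the condition $\{v(1),\dots,v(p)\}\!\uparrow \le \{wt_{i,j}(1),\dots,wt_{i,j}(p)\}\!\uparrow$ coincides with $\{v(1),\dots,v(p)\}\!\uparrow \le \{w(1),\dots,w(p)\}\!\uparrow$, which is automatically satisfied because of the standing hypothesis $v \le w$ (again via the tableau criterion).

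Putting the two observations together, the componentwise inequalities defining $v \le wt_{i,j}$ are automatic for $p < i$ and $p \ge j$, and hence the only potentially nontrivial conditions occur in the range $i \le p < j$. This proves both implications simultaneously: the forward direction is trivial (it just restricts the range of $p$), and the backward direction follows because combining the hypothesis for $i \le p < j$ with the automatic inequalities for the remaining values of $p$ produces the full tableau criterion for $v \le wt_{i,j}$. There is no real obstacle here; the only care needed is to verify the set equality $\{wt_{i,j}(1),\dots,wt_{i,j}(p)\} = \{w(1),\dots,w(p)\}$ for $p$ outside the interval $[i,j-1]$, which is an immediate consequence of how right multiplication by $t_{i,j}$ acts on one-line notation.
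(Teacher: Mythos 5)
Your proof is correct and follows essentially the same route as the paper's: both invoke the tableau criterion for Bruhat order and observe that $\{wt_{i,j}(1),\dots,wt_{i,j}(p)\}=\{w(1),\dots,w(p)\}$ for $p<i$ or $p\ge j$, so that those inequalities are automatic from $v\le w$. Your write-up merely spells out the set-equality verification in more detail.
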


\begin{proof}
The condition $v\le wt_{i,j}$ is equivalent to
\begin{equation*}
\{v(1),v(2),\dots,v(p)\}\!\uparrow\, \le \{wt_{i,j}(1),wt_{i,j}(2),\dots,wt_{i,j}(p)\}\!\uparrow \ \text{for every $1\le p<n$}. \end{equation*}
For $p<i$ or $p\ge j$, we have
$$\{wt_{i,j}(1),wt_{i,j}(2),\dots,wt_{i,j}(p)\}=\{w(1),w(2),\dots,w(p)\}.$$
Since $v\le w$, this shows that the inequality above holds for $p<i$ or $p\ge j$, proving the lemma.
\end{proof}

We set
\[
V(p)=\{v(1),\dots,v(p)\},\qquad W_{i,j}(p)=\{ wt_{i,j}(1),\dots,wt_{i,j}(p)\}.
\]
We also introduce the following notation: if $A$ and $B$ are sets and $C$ is a subset of $A\cap B$, then we write
\[
(A,B)\equiv (A\backslash C,B\backslash C).
\]
We will apply this notation to $V(p)$ and $W_{i,j}(p)$ later.

In this section we investigate the following case
\begin{equation} \label{eq:7-1}
w=vs_1s_2\cdots s_{n-1} \quad\text{equivalently}\quad v=ws_{n-1}\cdots s_2s_1
\end{equation}
where $\ell(w)-\ell(v)=n-1$.
(A similar argument works when $w=vs_{n-1}s_{n-2}\cdots s_1$.) Since $\ell(w)-\ell(v)=n-1$, it follows from \eqref{eq:7-1} that
\begin{equation*} 
w(n)<w(1),\dots,w(n-1).
\end{equation*}
Note that these inequalities imply that $w(n)=1$.

\begin{proposition} \label{prop:7-3}
Let $v$ and $w$ be as in \eqref{eq:7-1}. Then $wt_{i,j}$ is a coatom of $[v,w]$ if and only if $w(i)>w(j)$ and $w(p)>w(i)$ for every $i< p<j$. Moreover, there are exactly $n-1$ coatoms in $[v,w]$ and hence $Q_{v,w}$ is a cube.
\end{proposition}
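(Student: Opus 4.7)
The hypothesis $w = vs_1s_2\cdots s_{n-1}$ with $\ell(w)-\ell(v) = n-1$ forces $v(1) = 1$ and $v(k) = w(k-1)$ for $k \geq 2$; in particular $w(n) = 1$. The plan is to decompose the coatom condition into (i) the cover condition $wt_{i,j}\lessdot w$, which by the standard Bruhat criterion is $w(i)>w(j)$ together with $w(p)\notin(w(j),w(i))$ for all $i<p<j$; and (ii) the inclusion $v \leq wt_{i,j}$, which by Lemma~\ref{lemm:7-2} is equivalent to $V(p)\!\uparrow\, \leq\, W_{i,j}(p)\!\uparrow$ for each $i \leq p < j$.

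For (ii), the explicit formula above gives $V(p) = \{1\} \cup \{w(1),\ldots,w(p-1)\}$, and for $i \leq p < j$ we have $W_{i,j}(p) = (\{w(1),\ldots,w(p)\}\setminus\{w(i)\}) \cup \{w(j)\}$. After cancelling the common subset, the case $p=i$ reduces via Lemma~\ref{lemm:7-1}(1) to comparing $\{1\}$ with $\{w(j)\}$ and is automatic. For $i<p<j$ one obtains the reduction $(V(p), W_{i,j}(p))\equiv(\{1,w(i)\},\{w(p),w(j)\})$ when $j<n$ (if $j=n$ then the extra $1$ is also cancelled, producing $(\{w(i)\},\{w(p)\})$); in either case Lemma~\ref{lemm:7-1} yields the condition $w(i) \leq \max(w(p),w(j))$. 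Since (i) already gives $w(i)>w(j)$, this becomes $w(p) > w(i)$. Conversely, $w(i)>w(j)$ together with $w(p)>w(i)$ for all $i<p<j$ trivially implies (i), so the stated characterization follows.

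For the counting, for each $i \in \{1,\ldots,n-1\}$ let $j(i)$ be the smallest index greater than $i$ with $w(j(i)) < w(i)$; such $j(i)$ exists because $w(n)=1$. Minimality of $j(i)$ immediately gives $w(p) > w(i)$ for every $i<p<j(i)$, so $(i,j(i))$ satisfies the characterization, and conversely any valid $j$ must equal $j(i)$. Hence $i\mapsto wt_{i,j(i)}$ is a bijection from $\{1,\ldots,n-1\}$ onto the set of coatoms, so $[v,w]$ has exactly $n-1$ coatoms. Since $s_1,\ldots,s_{n-1}$ are distinct, Proposition~\ref{prop:6-1} implies $Q_{v,w}$ is toric of dimension $n-1$; by Theorem~\ref{prop:3-2} the $n-1$ coatoms yield $n-1$ edges at the vertex $w$, making $w$ simple, and Proposition~\ref{prop:3-7} then upgrades $Q_{v,w}$ to a cube. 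The delicate point is step (ii): one must track which elements land in the common part depending on whether $j = n$, but the reduction via Lemma~\ref{lemm:7-1} produces the same strict inequality $w(p) > w(i)$ in both regimes.
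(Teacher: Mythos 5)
Your proof is correct and follows essentially the same route as the paper's: the same decomposition into the cover condition plus the comparison $V(p)\!\uparrow\,\le W_{i,j}(p)\!\uparrow$ via Lemma~\ref{lemm:7-2}, the same cancellation reducing to $(\{1,w(i)\},\{w(j),w(p)\})$ and Lemma~\ref{lemm:7-1}, and the same ``first smaller entry to the right'' count giving exactly $n-1$ coatoms (your $j<n$ versus $j=n$ case split is handled in the paper by the remark that Lemma~\ref{lemm:7-1}(2) degenerates to (1) when the pairs intersect). The concluding appeal to Propositions~\ref{prop:6-1} and~\ref{prop:3-7} is also the intended one.
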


\begin{proof}
We note that $\ell(wt_{i,j})=\ell(w)-1$ if and only if $w(i)>w(j)$ and $w(p)\notin [w(j),w(i)]$ for every $i< p<j$. We shall show that under this situation, the condition $v\le wt_{i,j}$ is equivalent to the condition $w(i)<w(p)$ for every $i< p<j$.

It follows from \eqref{eq:7-1} that we have
\[
\arraycolsep=1.4pt
\begin{array}{rcccccccccccc}
wt_{i,j}=&w(1)& w(2) &\ldots &w(i-1)&\cfbox{red}{$w(j)$}&w(i+1)&\ldots &w(j-1)&\cfbox{red}{$w(i)$}&w(j+1)&\ldots &w(n),\\
v=&w(n)&w(1)& \ldots &w(i-2)&w(i-1)&w(i)&\ldots &w(j-2)&w(j-1)&w(j)&\ldots &w(n-1).
\end{array}
\]
Therefore, we have
\[
(V(p),W_{i,j}(p)) \equiv 
\begin{cases} 
{(\{w(n)\}, \{w(j)\})} & {\text{for } p = i,} \\
(\{w(n),w(i)\}, \{w(j),w(p)\}) & \text{for $i < p<j$}.
\end{cases}
\]
Then
\[
\begin{split}
V(p)\!\uparrow\ \le W_{i,j}(p)\!\uparrow\ &\Longleftrightarrow\ w(n) \leq w(j) \quad \text{ for } p = i,\\
V(p)\!\uparrow\ \le W_{i,j}(p)\!\uparrow\ &\Longleftrightarrow\ \{w(n),w(i)\}\uparrow\ \le \{w(j),w(p)\}\!\uparrow
\quad \text{ for }i < p < j
\end{split}
\]
by Lemma~\ref{lemm:7-1}. Since $1=w(n)<w(j)<w(i)$, it follows from Lemma~\ref{lemm:7-2} that $wt_{i,j}$ is a coatom of~$[v,w]$ if and only if $w(i)<w(p)$ for every $i< p<j$, proving the former statement of the proposition.

For each $1\le i<n$, there exists $j$ satisfying the condition in the former statement since $w(n)=1$, and such $j$ is unique for each $i$. This proves the latter statement in the proposition.
\end{proof}

In the remainder of this section we will treat the case
\[
w=vs(a,b)s(c,d) \qquad\text{($s(a,b)s(c,d)\not=s(\bar{a},\bar{d})$)},
\]
where $[\bar{a},\bar{b}]\cup [\bar{c},\bar{d}]=[1,n-1]$ and $\ell(w)-\ell(v)=n-1$. The conjugation on $\mathfrak{S}_n$ by $w_0$ maps $s_i$ to $s_{n-i}$, so it suffices to consider the case where $a<b$. There are three cases:
\begin{enumerate}
\item[I\,:] $(a,b)=(1,k-1)$, $(c,d)=(n-1,k)$,
\item[II\,:] $(a,b)=(k,n-1)$, $(c,d)=(k-1,1)$,
\item[III\,:] $(a,b)=(k,n-1),$ $(c,d)=(1,k-1)$,
\end{enumerate}
where $n\ge 4$ and $2\le k\le n-2$.

In the following, we assume that $\ell(wt_{i,j})=\ell(w)-1$, so
\begin{equation*}
\text{$w(i)>w(j)$ and $w(p)\notin [w(j),w(i)]$ for every $i< p<j$.}
\end{equation*}
We keep in mind that $w(i)>w(j)$ throughout this section unless otherwise stated.
We shall observe that the condition $v\le wt_{i,j}$ gives stronger conditions than the above. By Lemma~\ref{lemm:7-2},
it suffices to check
\begin{equation} \label{eq:8-1}
V(p)\!\uparrow\ \le W_{i,j}(p)\!\uparrow \quad\text{for $i\le p<j$}.
\end{equation}

\medskip
{\bf Case I.} In this case we have
\begin{equation*}
\begin{split}
w&=v(s_1\cdots s_{k-1})(s_{n-1}\cdots s_k),\quad\text{equivalently}\\
v&=w(s_k\cdots s_{n-1})(s_{k-1}\cdots s_1).
\end{split}
\end{equation*}
Since $\ell(w)-\ell(v)=n-1$, we have
\begin{equation} \label{eq:8-2}
\begin{split}
&w(k+1),w(k+2),\dots,w(n)<w(k),\\
&w(k+1)<w(1),\dots,w(k-1).
\end{split}
\end{equation}

\begin{proposition} \label{prop:8-1}
In Case I, $wt_{i,j}$ is a coatom of $[v,w]$ if and only if one of the following is satisfied:
\begin{enumerate}
\item If $1\le i<j\le k+1$, then $w(i)<w(p)$ for every $i< p<j$.
\item If $k\le i<j\le n$, then $w(p)<w(j)$ for every $i< p<j$.
\item If $1\le i< k$ and $k+1< j\le n$, then $w(i)<w(p)$ for every $i< p\le k$ and $w(p)<w(j)$ for every $k<  p<j$.
\end{enumerate}
Moreover, $Q_{v,w}$ is a cube in Case I if and only if there is no pair $(i,j)$ in {\rm(3)}.
\end{proposition}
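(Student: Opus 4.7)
First I would compute $v$ in one-line notation.  Performing the right multiplications in $v=ws(k,n-1)s(k-1,1)$ position by position gives
\[
v=\big(w(k+1),\,w(1),\,w(2),\,\ldots,\,w(k-1),\,w(k+2),\,\ldots,\,w(n),\,w(k)\big),
\]
from which the hypothesis $\ell(w)-\ell(v)=n-1$ is precisely~\eqref{eq:8-2}.  The element $wt_{i,j}$ is a coatom of $[v,w]$ iff $\ell(wt_{i,j})=\ell(w)-1$ (equivalently $w(i)>w(j)$ and $w(p)\notin[w(j),w(i)]$ for $i<p<j$) and $v\le wt_{i,j}$, and by Lemma~\ref{lemm:7-2} the latter reduces to checking $V(p)\!\uparrow\ \le W_{i,j}(p)\!\uparrow$ for every $i\le p<j$.

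Cases~(1)--(3) exhaust all pairs $1\le i<j\le n$ (with the single overlap $(k,k+1)\in(1)\cap(2)$), indexed by where $i$ and $j$ lie relative to the break $\{k,k+1\}$.  In each case and for each $p\in[i,j-1]$ I would compute $V(p)$ and $W_{i,j}(p)$ from the formula for $v$ and cancel common elements.  Every such reduction ends in a two-element comparison of the form
\[
\{w(i),w(k+1)\}\!\uparrow\le\{w(p),w(j)\}\!\uparrow\quad\text{(typical of case~(1), $p>i$)}
\]
or
\[
\{w(i),w(p+1)\}\!\uparrow\le\{w(k),w(j)\}\!\uparrow\quad\text{(typical of case~(2), $i>k$, $p<j-1$),}
\]
and mixtures thereof in case~(3).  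By~\eqref{eq:8-2}, $w(k+1)$ is the smallest and $w(k)$ the largest among the values that appear, so in each case one of the two component inequalities in Lemma~\ref{lemm:7-1}(2) is automatic; after using $w(i)>w(j)$, the other component collapses to exactly the strict inequality claimed in the proposition for that case.  The boundary values $p=i$ and $p=j-1$ produce smaller sets that reduce to a single inequality, and in case~(3) the length condition applied at $p=k$ and $p=k+1$ already forces $w(i)<w(k)$ and $w(k+1)<w(j)$, so those boundary inequalities are automatic.

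For the ``moreover'' statement, I would count the coatoms from cases~(1) and~(2).  In case~(1), for each $i\in\{1,\ldots,k\}$ the requirement that $w$ be increasing on $(i,j)$ pins $j$ down to be the smallest index in $(i,k+1]$ with $w(j)<w(i)$; such $j$ exists because $w(k+1)<w(1),\ldots,w(k)$, so case~(1) contributes exactly $k$ coatoms.  Symmetrically, case~(2) contributes $n-k$ coatoms, one for each $j\in\{k+1,\ldots,n\}$.  Since $(k,k+1)$ is the unique pair in $(1)\cap(2)$, the two cases together produce exactly $k+(n-k)-1=n-1$ distinct coatoms, matching $\dim Q_{v,w}=n-1$.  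As $Q_{v,w}$ is toric by Proposition~\ref{prop:6-1}, Propositions~\ref{prop:3-5} and~\ref{prop:3-7} imply that $Q_{v,w}$ is a cube iff $w$ is a simple vertex iff the number of coatoms equals $n-1$, and by the count this holds iff no case~(3) pair occurs.

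The hard part will be executing the three-way case split with all its subcases uniformly: for each case one must subdivide further by the position of $p$ relative to $k,k+1,i,j$, track which indices survive in $V(p)\triangle W_{i,j}(p)$, and then correctly identify which component of the resulting two-element inequality is automatic from~\eqref{eq:8-2} and which is the genuinely new condition.  No individual step is deep, but the bookkeeping is delicate.
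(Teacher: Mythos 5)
Your proposal is correct and follows essentially the same route as the paper's proof: compute $v$ in one-line notation from $v=ws(k,n-1)s(k-1,1)$, reduce $v\le wt_{i,j}$ via Lemmas~\ref{lemm:7-1} and~\ref{lemm:7-2} to two-element comparisons that collapse under~\eqref{eq:8-2} to the stated inequalities, and count exactly $n-1$ coatoms from cases (1)--(2) (with the single overlap $(k,k+1)$) so that the cube criterion is the absence of case-(3) pairs. The only differences are cosmetic: you make explicit that the boundary instances $p=k,k+1$ in case (3) are automatic and you cite Propositions~\ref{prop:3-5} and~\ref{prop:3-7} for the final equivalence, both of which the paper leaves implicit.
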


\begin{proof}
(1) In this case we have
\[
{\scriptsize
	\arraycolsep = 1.4pt
\begin{array}{rccccccccccccccc}
wt_{i,j}=& w(1)& w(2)& \dots &w(i-1)&\cfbox{red}{$w(j)$}& w(i+1)& \dots& w(j-1)& \cfbox{red}{$w(i)$}& \dots& w(k)&w(k+1)&\dots &w(n-1)&w(n),\\
v=& w(k+1) &w(1) &\dots& w(i-2)&w(i-1)&w(i)& \ldots&w(j-2)&w(j-1)&\dots &w(k-1)&w(k+2)&\dots&w(n)&w(k).
\end{array}
}
\]

where $w(i-1)$ in the line of~$v$ for $i=1$ is understood to be $w(k+1)$. Therefore
\begin{equation*}
(V(p),W_{i,j}(p))\equiv 
\begin{cases}
(\{w(k+1)\}, \{w(j)\}) & {\text{for } p = i,} \\
(\{w(k+1),w(i)\}, \{w(j),w(p)\}) &\text{for $i < p< j$}.
\end{cases}
\end{equation*}
Here $w(k+1)\le w(j)<w(i)$ by \eqref{eq:8-2} because $j\le k+1$. Therefore, it follows from Lemma~\ref{lemm:7-1} that \eqref{eq:8-1} is equivalent to $w(i)< w(p)$ for $i< p< j$, proving (1).

(2) In this case we have
\[
{\scriptsize
	\arraycolsep = 1.4pt
	\begin{array}{rccccccccccccccc}
wt_{i,j}=&w(1)&w(2) &\dots& w(k)& w(k+1)&\dots &w(i-1)&\cfbox{red}{$w(j)$}&w(i+1)& \dots& w(j-1)&\cfbox{red}{$w(i)$}&\dots& w(n-1)&w(n),\\
v= &w(k+1) &w(1)& \dots& w(k-1)&w(k+2)&\dots& w(i)&w(i+1)&w(i+2)&\dots&w(j)&w(j+1)&\dots &w(n)&w(k).
\end{array}}
\]
Therefore
\begin{equation*}
(V(p),W_{i,j}(p))\equiv ( \{w(p+1),w(i)\}, \{w(j),w(k)\}) \quad\text{for $i\le p< j$}.
\end{equation*}
Here $w(j)<w(i)\le w(k)$ by \eqref{eq:8-2} because $k\le i$. Therefore, it follows from Lemma~\ref{lemm:7-1} that \eqref{eq:8-1} is equivalent to $w(p+1)\le w(j)$ for $i\le p< j$, proving (2).

(3) In this case we have
\[
{\scriptsize
		\arraycolsep = 1.4pt
		\begin{array}{rccccccccccccccc}
wt_{i,j}=&w(1)& w(2) &\dots& w(i-1)&\cfbox{red}{$w(j)$}&w(i+1)&\dots& w(k)& w(k+1)&\dots &w(j-1)& \cfbox{red}{$w(i)$}&\dots& w(n-1)&w(n),\\
v= &w(k+1)& w(1)& \dots& w(i-2)&w(i-1)&w(i)&\dots &w(k-1)&w(k+2)&\dots &w(j-2)&w(j-1)&\dots& w(n)&w(k).
\end{array}}
\]
Therefore
\begin{equation*}
(V(p),W_{i,j}(p))\equiv \begin{cases} 
{(\{w(k+1)\}, \{w(j)\})} & {\text{for } p = i,} \\
(\{w(k+1),w(i)\}, \{w(j),w(p)\}) &\text{for $i< p\le k$},\\
(\{w(p+1),w(i)\}, \{w(j),w(k)\}) &\text{for $k< p<j$.}
\end{cases}
\end{equation*}

First we treat the case for $i\le p\le k$. Note that $w(k+1)<w(i)$ by \eqref{eq:8-2} because $i\le k$. Therefore, it follows from Lemma~\ref{lemm:7-1} that \eqref{eq:8-1} is equivalent to
\begin{equation} \label{eq:8-3}
\text{$w(k+1)\le w(j)$ and $w(i)\le w(p)$ for $i\le p\le k$.}
\end{equation}
As for the case {when} $k<p<j$, note that $w(j)<w(k)$ by \eqref{eq:8-2} because $k+1< j$. Therefore, it follows from Lemma~\ref{lemm:7-1} that \eqref{eq:8-1} is equivalent to
\begin{equation} \label{eq:8-4}
\text{$w(p+1)\le w(j)$ for $k<p<j$ and $w(i)\le w(k)$.}
\end{equation}
Inequalities \eqref{eq:8-3} and \eqref{eq:8-4} prove case (3).

In case (1), the latter inequalities in \eqref{eq:8-2} ensure the existence of the desired $j$ for each $i$ and such $j$ is unique for each $i$; so there are exactly $k$ desired pairs $(i,j)$ in case (1). The same is true for case (2) with the role of~$i$ and~$j$ interchanged. Namely, for each $j$ there exists a unique desired $i$ where the existence of~$i$ for the~$j$ is ensured by the former inequalities in \eqref{eq:8-2}; so there are exactly $n-k$ desired pairs $(i,j)$ in case (2). However, cases (1) and (2) have one overlap, that is, the case $(i,j)=(k,k+1)$ and this case satisfies the required condition. Therefore, we obtain exactly $n-1$ coatoms of $[v,w]$ from cases (1) and (2).
This proves the last statement in the proposition.
\end{proof}

\begin{remark}
The number of the pairs $(i,j)$ in case (3) is at most $(k-1)(n-k-1)$ and the following example attains the maximum:
\[
{
	\arraycolsep = 1.4pt
\begin{array}{rcrrrrrrrrrr}
w&= [&n-k+1,&n-k+2,&\ldots,&n,&1,&2,&\ldots,&n-k-1,&n-k&],\\
v&=[&1,&n-k+1,&\ldots,&n-1,&2,&3,&\ldots,&n-k,&n&].
\end{array}
}
\]

Therefore, the Bruhat interval $[v,w]$, which is of length $n-1$, has 
\[
(n-1)+(k-1)(n-k-1)=k(n-k) 
\]
{many} coatoms. Note that $k(n-k) \le \lfloor n^2/4\rfloor$, and the equality is attained when $k=\lfloor n/2\rfloor$. It is shown in \cite[{Theorem in \S 1}]{koba11} that the number of coatoms of any Bruhat interval of length $n-1$ is at most $\lfloor n^2/4\rfloor$ and that the maximum can be attained by the above example.
\end{remark}

Two types of $(V(p),W_{i,j}(p))$ appear for case (3) in the above proof and each appears for cases (1) and (2) respectively. This implies that it suffices to treat case (3) essentially.

\medskip

{\bf Case II.} In this case we have
\begin{equation*}
\begin{split}
w&=v(s_k\cdots s_{n-1})(s_{k-1}\cdots s_1),\quad\text{equivalently}\\
v&=w(s_1\cdots s_{k-1})(s_{n-1}\cdots s_k).
\end{split}
\end{equation*}
Since $\ell(w)-\ell(v)=n-1$, we have
\begin{equation} \label{eq:8-5}
\begin{split}
&w(2),w(3),\dots,w(k)<w(1),\\
&w(n)<w(1),w(k+1),\dots,w(n-1).
\end{split}
\end{equation}

\begin{proposition} \label{prop:8-2}
In Case II, $wt_{i,j}$ is a coatom of $[v,w]$ if and only if one of the following is satisfied:
\begin{enumerate}
\item If $1\le i<j\le k$, then $w(p)<w(j)$ for every $i< p<j$.
\item If $k< i<j\le n$, then $w(i)<w(p)$ for every $i< p<j$.
\item If $1\le i\le k< j\le n$, then $w(p)<w(j)$ for every $i< p\le k$ and $w(i)<w(p)$ for every $k< p<j$.
\end{enumerate}
Moreover, $Q_{v,w}$ is a cube in Case II if and only if there is only one pair $(i,j)$ in {\rm(3)}.
\end{proposition}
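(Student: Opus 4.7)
The plan is to mirror the structure of the proof of Proposition~\ref{prop:8-1}. First I would compute the one-line notation of~$v$: right-multiplying $w$ by $(s_1s_2\cdots s_{k-1})$ cyclically shifts $w(1)$ from position~$1$ into position~$k$, and a further right-multiplication by $(s_{n-1}s_{n-2}\cdots s_k)$ pulls $w(n)$ from position~$n$ into position~$k$, yielding
\[
v=[w(2),w(3),\dots,w(k),w(n),w(1),w(k+1),\dots,w(n-1)].
\]
As in the proof of Proposition~\ref{prop:8-1}, I then assume $\ell(wt_{i,j})=\ell(w)-1$ (equivalently $w(i)>w(j)$ and $w(p)\notin[w(j),w(i)]$ for every $i<p<j$) and use Lemma~\ref{lemm:7-2} to reduce $v\le wt_{i,j}$ to the inequalities $V(p)\!\uparrow\ \le W_{i,j}(p)\!\uparrow$ for $i\le p<j$. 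Each such comparison is then simplified by removing common elements from $V(p)$ and $W_{i,j}(p)$ and applying Lemma~\ref{lemm:7-1}, using the positional inequalities~\eqref{eq:8-5} to identify the extremes.

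In Case~(1), where $j\le k$, for each $i\le p<j$ one finds $(V(p),W_{i,j}(p))\equiv(\{w(i),w(p+1)\},\{w(1),w(j)\})$ (with the obvious adjustment when $i=1$). Since $w(j)<w(1)$ by~\eqref{eq:8-5} and $w(i)>w(j)$ by hypothesis, Lemma~\ref{lemm:7-1}(2) reduces the comparison to $w(p+1)\le w(j)$, which is equivalent to $w(p)<w(j)$ for $i<p<j$. In Case~(2), where $i>k$, one obtains $(V(p),W_{i,j}(p))\equiv(\{w(n),w(i)\},\{w(j),w(p)\})$ (with suitable adjustments when $p=i$ or $j=n$); since $w(n)$ is smaller than each of $w(i),w(j),w(p)$ by~\eqref{eq:8-5}, the comparison collapses to $w(i)\le w(p)$, yielding $w(i)<w(p)$ for $i<p<j$.

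Case~(3), where $i\le k<j$, is the main technical obstacle because the form of $V(p)$ changes as $p$ crosses~$k$. I would split this into three subcases $i\le p\le k-1$, $p=k$, and $k+1\le p\le j-1$. The first reproduces the situation in Case~(1) and contributes $w(p)<w(j)$ for $i<p\le k$; the third reproduces Case~(2) and contributes $w(i)<w(p)$ for $k<p<j$; and for $p=k$ one computes $(V(k),W_{i,j}(k))\equiv(\{w(i),w(n)\},\{w(1),w(j)\})$, where $w(i),w(n)<w(1)$ by~\eqref{eq:8-5}, and the required inequality $\min\{w(i),w(n)\}\le w(j)$ reduces via the coatom hypothesis $w(i)>w(j)$ to $w(n)\le w(j)$, which is automatic from~\eqref{eq:8-5}. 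Hence the middle subcase imposes no extra constraint, and the combined conditions of~(3) emerge.

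For the \emph{moreover} claim I would count coatoms and invoke Corollary~\ref{coro:3-8}. Since $w=vs(k,n-1)s(k-1,1)$ is obtained from~$v$ by multiplying by $n-1$ distinct simple reflections, Proposition~\ref{prop:6-1} guarantees that $Q_{v,w}$ is toric of dimension~$n-1$, so $Q_{v,w}$ is a cube precisely when $w$ is a simple vertex of $Q_{v,w}$, equivalently when the number of coatoms of $[v,w]$ equals $n-1$. Using~\eqref{eq:8-5}---the maximality of $w(1)$ among $w(1),\dots,w(k)$ and the minimality of $w(n)$ among $w(k+1),\dots,w(n)$---one checks that Case~(1) produces a unique coatom for each $j\in[2,k]$ and Case~(2) produces a unique coatom for each $i\in[k+1,n-1]$, accounting for exactly $(k-1)+(n-1-k)=n-2$ coatoms. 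Since any vertex of an $(n-1)$-dimensional polytope has at least $n-1$ incident edges, Case~(3) must contribute at least one pair, and $Q_{v,w}$ is a cube if and only if Case~(3) contributes exactly one.
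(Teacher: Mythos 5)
Your proposal is correct and follows essentially the same route as the paper: the same one-line computation of $v$, the same reduction of $v\le wt_{i,j}$ to the pairs $(V(p),W_{i,j}(p))$ via Lemmas~\ref{lemm:7-1} and~\ref{lemm:7-2} together with~\eqref{eq:8-5}, and the same count of exactly $n-2$ coatoms arising from cases (1) and (2) to settle the cube criterion. The only differences are presentational (the paper works out the subcase $1\le i<k$, $k+1<j\le n$ in full and treats the rest as modifications, while you organize by the three cases and make the final appeal to Proposition~\ref{prop:6-1} and the edge-count lower bound explicit).
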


\begin{proof}
Suppose that $1\le i< k$ and $k+1< j\le n$ . Then we have
\[
{\scriptsize
		\arraycolsep = 1.4pt
		\begin{array}{rcccccccccccccc}
wt_{i,j}= &w(1)&\dots &w(i-1)&\cfbox{red}{$w(j)$}&\dots& w(k-1)&w(k)&w(k+1)&w(k+2) &\dots& w(j-1)&\cfbox{red}{$w(i)$}& \dots &w(n),\\
v=& w(2)& \dots &w(i)&w(i+1)&\dots & w(k)& w(n)& w(1)& w(k+1) &\dots &w(j-2)&w(j-1)&\dots &w(n-1).
\end{array}}
\]
Therefore
\begin{equation} \label{eq:8-6}
(V(p),W_{i,j}(p))\equiv \begin{cases} (\{w(p+1),w(i)\}, \{w(j),w(1)\}) \quad&\text{for $i\le p< k$},\\
( \{w(n),w(i)\},\{w(j),w(1)\}) \quad&\text{for $p=k$},\\
( \{w(n),w(i)\}, \{w(j),w(p)\}) \quad&\text{for $k<p<j$}.
\end{cases}
\end{equation}
Here $w(n)\le w(j)<w(i)\le w(1)$ by \eqref{eq:8-5} because $1\le i<k$ and $k+1<j\le n$. Therefore, it follows from Lemma~\ref{lemm:7-1} that \eqref{eq:8-1} is equivalent to
\[
\begin{split}
w(p+1)<w(j) \quad&\text{for $i\le p<k$},\\
w(i)<w(p)\quad &\text{for $k<p<j$},
\end{split}
\]
proving the assertion when $1\le i<k$ and $k+1<j\le n$.

One can see that the same argument works for the remaining cases with a little modification. For instance, when $1\le i<j\le k$ (i.e., case (1) in the proposition), only the first type in \eqref{eq:8-6} occurs and when $k<i<j\le n$ (i.e., case (2) in the proposition), only the third type in \eqref{eq:8-6} occurs. The cases where $i=k$ or $j=k+1$ in case (3) remain. When $i=k$ and $j=k+1$, only the second type in \eqref{eq:8-6} appears and when $i=k$ and $k+1<j\le n$, the second and third types in \eqref{eq:8-6} appear and when $1\le i<k$ and $j=k+1$, the first and second types in \eqref{eq:8-6} appear.

In case (1), the former inequalities in \eqref{eq:8-5} ensure the existence of the desired $i$ for each $j$ and such $i$ is unique for the $j$; so there are exactly $k-1$ pairs $(i,j)$ in case (1). The same is true for case (2) with the role of~$i$ and~$j$ interchanged. Namely, for each $i$ there exists a unique desired $j$ where the existence of~$j$ for the $i$ is ensured by the latter inequalities in \eqref{eq:8-5}; so there are exactly $n-k-1$ pairs $(i,j)$ in case (2). Therefore, we obtain exactly $n-2$ coatoms from cases (1) and (2). This proves the last statement in the proposition.
\end{proof}

\begin{remark}
If $w(k)>w(k+1)$, then there is only one pair $(i,j)$ in case (3), that is $(i,j)=(k,k+1)$. Therefore, $Q_{v,w}$ is a cube in this case.
If $w(k)<w(k+1)$, then it happens that there are more than one pair $(i,j)$ in case (3) but the number of those pairs is at most $\min\{k,n-k\}$ because for each $i$, the desired~$j$ is unique if it exists and vice versa. The following examples attain the maximum $\min\{k,n-k\}$: \newline
when $k> n-k$,
\[
w=[n,n-2,\dots,n-2(n-k),2k-n-1,2k-n-2,\dots,1,n-1,n-3,\dots,n-1-2(n-k-1)]
\]
where $w(k)=1$, and when $k\le n-k$,
\[
w=[n,n-2,\dots,n-2(k-1),n-1,n-3,\dots,n-1-2(k-1),n-2k,n-2k-1,\dots,1]
\]
where $w(n)=1$. For instance
\[
\begin{split}
&w=[10,8,6,4,3,2,1,9,7,5] \qquad \text{when $(n,k)=(10,7)$},\\
&w=[10,8,6,4,2,9,7,5,3,1] \qquad \text{when $(n,k)=(10,5)$},\\
&w=[10,8,6,9,7,5,4,3,2,1] \qquad \text{when $(n,k)=(10,3)$}.
\end{split}
\]
\end{remark}

\medskip
{\bf Case III.} In this case we have
\begin{equation*}
\begin{split}
w&=v(s_k\cdots s_{n-1})(s_{1}\cdots s_{k-1}),\quad\text{equivalently}\\
v&=w(s_{k-1}\cdots s_{1})(s_{n-1}\cdots s_k).
\end{split}
\end{equation*}
Since $\ell(w)-\ell(v)=n-1$, we have
\begin{equation} \label{eq:8-7}
\begin{split}
&w(k)<w(1),\dots, w(k-1),\\
&w(n)<w(k-1),w(k+1),\dots,w(n-1).
\end{split}
\end{equation}
We note that $w(k)=1$ or $w(n)=1$. Indeed, if $w(k)<w(n)$ (respectively, $w(n)<w(k)$), then it follows from \eqref{eq:8-7} that $w(k)=1$ (respectively, $w(n)=1$).

\begin{proposition} \label{prop:8-3}
In Case III, $wt_{i,j}$ is a coatom of $[v,w]$ if and only if one of the following is satisfied:
\begin{enumerate}
\item If $1\le i<j\le k$ or $k< i<j\le n$, then $w(i)<w(p)$ for every $i< p<j$,
\item If $i=k<j\le n$, then $w(i)<w(p)$ for every $i< p<j$ and $w(n)=1$,
\item If $1\le i< k<j\le n$, then $w(i)<w(p)$ for every $i< p< j$ with $p\not=k$ and $w(k)<w(j)$.
\end{enumerate}
Moreover, $Q_{v,w}$ is a cube in Case III if and only if there is no {\rm(}respectively, only one{\rm)} pair $(i,j)$ in {\rm(3)} when $w(n)=1$ {\rm(}respectively, $w(k)=1${\rm)}.
\end{proposition}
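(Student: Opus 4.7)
The plan is to parallel the approach of Propositions~\ref{prop:8-1} and~\ref{prop:8-2}. First I would compute the one-line notation of $v=w(s_{k-1}\cdots s_1)(s_{n-1}\cdots s_k)$. Applying $(s_{k-1}\cdots s_1)$ on the right moves $w(k)$ to position~$1$ and shifts $w(1),\ldots,w(k-1)$ into positions $2,\ldots,k$, and then applying $(s_{n-1}\cdots s_k)$ moves the entry in position~$n$ back to position~$k$, shifting $w(k-1),w(k+1),\ldots,w(n-1)$ into positions $k+1,k+2,\ldots,n$. This yields
\[
v=[w(k),\,w(1),\,w(2),\,\ldots,\,w(k-2),\,w(n),\,w(k-1),\,w(k+1),\,w(k+2),\,\ldots,\,w(n-1)].
\]
Consequently the index set of $V(p)=\{v(1),\ldots,v(p)\}$ equals $\{1,\ldots,p-1\}\cup\{k\}$ for $1\le p\le k-1$, equals $\{1,\ldots,k-2,k,n\}$ for $p=k$, and equals $\{1,\ldots,p-1\}\cup\{n\}$ for $k+1\le p\le n-1$. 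By Lemma~\ref{lemm:7-2}, the condition $v\le wt_{i,j}$ then reduces to checking $V(p)\!\uparrow\,\le W_{i,j}(p)\!\uparrow$ for every $i\le p<j$.

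Next, I would treat each case separately. In Case~(1), a direct computation of symmetric differences shows $(V(p),W_{i,j}(p))\equiv(\{w(k),w(i)\},\{w(j),w(p)\})$ for $i<p<j$ when $1\le i<j\le k$, and $(V(p),W_{i,j}(p))\equiv(\{w(n),w(i)\},\{w(j),w(p)\})$ when $k<i<j\le n$; the $p=i$ contribution reduces to a singleton comparison $w(k)\le w(j)$ or $w(n)\le w(j)$ which holds by~\eqref{eq:8-7}, and Lemma~\ref{lemm:7-1} together with the inequalities $w(k)<w(i)$ and $w(n)<w(p)$ from~\eqref{eq:8-7} collapses the remaining constraints to $w(i)<w(p)$ for $i<p<j$. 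In Case~(2), the comparison at $p=k$ gives $\{w(k),w(n)\}\!\uparrow\,\le\{w(k-1),w(j)\}\!\uparrow$, and since either $w(k)=1$ or $w(n)=1$, the length-decrease requirement $w(k)>w(j)$ forces $w(n)=1$; for $k<p<j$, the reduction $(V(p),W_{k,j}(p))\equiv(\{w(k),w(n)\},\{w(j),w(p)\})$ collapses to $w(k)<w(p)$. In Case~(3), the $p=i$ analysis yields $w(k)\le w(j)$, i.e., $w(k)<w(j)$; for $p$ in $(i,k)$ or $(k,j)$ the analogous reduction together with the coatom dichotomy $w(p)<w(j)$ or $w(p)>w(i)$ and the inequalities in~\eqref{eq:8-7} force $w(i)<w(p)$, while the $p=k$ comparison, split into subcases $i\le k-2$ versus $i=k-1$ and $j<n$ versus $j=n$, is automatic once $w(k)<w(j)$ is known. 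This last observation is exactly the reason for the ``$p\neq k$'' exception in the statement.

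For the cube characterization, I would count the coatoms. Arguing as in Propositions~\ref{prop:8-1} and~\ref{prop:8-2}, Case~(1) contributes exactly $(k-1)+(n-1-k)=n-2$ coatoms (for each admissible $i$, the inequalities in~\eqref{eq:8-7} guarantee the existence and uniqueness of $j$), and Case~(2) contributes $1$ coatom when $w(n)=1$ and $0$ coatoms when $w(k)=1$. Since $Q_{v,w}$ is toric of dimension $n-1$ by Proposition~\ref{prop:6-1}, it is a cube if and only if the total number of coatoms equals $n-1$, which translates precisely to: no pair $(i,j)$ appears in Case~(3) when $w(n)=1$, and exactly one pair appears in Case~(3) when $w(k)=1$. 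The main obstacle will be the multi-layered subcase analysis at $p=k$ in Case~(3), where one must simultaneously handle $i\le k-2$ versus $i=k-1$ and $j<n$ versus $j=n$ and invoke several inequalities from~\eqref{eq:8-7} to check that the two-element comparison there is automatic; the rest of the argument follows the template already established.
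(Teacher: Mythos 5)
Your proposal follows essentially the same route as the paper's proof: compute $v=w(s_{k-1}\cdots s_1)(s_{n-1}\cdots s_k)$ in one-line notation, reduce $v\le wt_{i,j}$ via Lemma~\ref{lemm:7-2} to the two-element comparisons of Lemma~\ref{lemm:7-1} using the $\equiv$ reductions, split according to the positions of $i$ and $j$ relative to $k$, and then count coatoms (the $n-2$ from case (1) plus the $0$ or $1$ from case (2)) to conclude via the toric-plus-simple-vertex criterion; your reductions and the final count agree with the paper's. One small correction to your case (3): the $p=k$ comparison $\{w(i),w(n)\}\!\uparrow\,\le\{w(k-1),w(j)\}\!\uparrow$ is not automatic from $w(k)<w(j)$ alone --- since $w(i)>w(j)\ge w(n)$, its max-part forces $w(i)<w(k-1)$, so it is subsumed by the $p=k-1$ instance of the condition ``$w(i)<w(p)$ for $p\ne k$'' (together with \eqref{eq:8-7}), not by the inequality $w(k)<w(j)$.
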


\begin{proof}
Suppose that $1\le i<k$ and $k+1<j\le n$. Then we have
\[
{\scriptsize
		\arraycolsep = 1.4pt
		\begin{array}{rcccccccccccccccc}
wt_{i,j}= &w(1)&w(2)&\dots& w(i-1)&\cfbox{red}{$w(j)$}&w(i+1)&\dots& w(k-1)&w(k)&w(k+1)&w(k+2) &\dots &w(j-1)&\cfbox{red}{$w(i)$}& \dots &w(n),\\
v= &w(k)&w(1)& \dots &w(i-2)&w(i-1)&w(i)& \dots& w(k-2)&w(n)&w(k-1)&w(k+1)& \dots& w(j-2)&w(j-1)&\dots &w(n-1).
\end{array}}
\]
Therefore
\begin{equation*} \label{eq:8-8}
(V(p),W_{i,j}(p))\equiv 
\begin{cases} 
{(\{w(k)\}, \{w(j)\})} & {\text{for } p = i,} \\
(\{w(k),w(i)\}, \{w(j),w(p)\}) \quad&\text{for $i< p< k$},\\
(\{w(n),w(i)\}, \{w(j),w(k-1)\}) \quad&\text{for $p=k$},\\
(\{w(n),w(i)\}, \{w(j),w(p)\}) \quad&\text{for $k<p<j$}.
\end{cases}
\end{equation*}
Here $w(k)<w(i)$ and $w(n)\le w(j)<w(i)$ by \eqref{eq:8-7} because $i<k$ and $k+1<j\le n$. Therefore, it follows from Lemma~\ref{lemm:7-1} that \eqref{eq:8-1} is equivalent to
\[
\begin{split}
&w(k)<w(j),\\
&w(i)<w(p) \quad\text{for $i< p<k$ or $k<p<j$},
\end{split}
\]
proving the assertion when $1\le i<k$ and $k+1<j\le n$.

Similarly to the proof of Proposition~\ref{prop:8-2}, one can see that the same argument works for the remaining cases with a little modification.

For each $1\le i<k$ (respectively, $k< i< n$), the desired $j$ in case (1) is unique and the existence of such $j$ is ensured by the former (respectively, latter) inequalities in \eqref{eq:8-7}; so we obtain $n-2$ coatoms from case (1).
Since the desired $j$ in case (2) is also unique, we obtain one coatom from case (2) when $w(n)=1$ and none when $w(k)=1$. Therefore, we obtain $n-1$ (respectively, $n-2$) coatoms from cases (1) and (2) when $w(n)=1$ (respectively, $w(k)=1$).
This proves the last statement in the proposition.
\end{proof}

\begin{remark}
The number of coatoms in case (3) is at most $k$ (respectively, $k-1$) when $w(n)=1$ (respectively, $w(k)=1$) and the following examples attain the maximum:
\[
\begin{split}
w&=[n-k+2,n-k+3,\dots,n,2,3,4,\dots,n-k+1,1] \qquad\text{when $w(n)=1$},\\
w&=[n-k+2,n-k+3,\dots,n,1,3,4,\dots,n-k+1,2] \qquad\text{when $w(k)=1$}.
\end{split}
\]
\end{remark}

\end{document}